\newtheorem{prop}{Proposition}[section]
\newtheorem{lem}[prop]{Lemma}
\newtheorem{thm}[prop]{Theorem}
\newtheorem{conj}[prop]{Conjecture}
\numberwithin{equation}{section}
\begin{document}
\title[On a conjecture on Schur functions]{On a conjecture on 2-reduced Schur functions and Schur's $Q$-functions}
\author[Y.~Nishiyama]{Yuta \textsc{Nishiyama}}
\address[Y.~Nishiyama]{Faculty~of~Advanced~Science~and~Technology, Kumamoto~University, Kumamoto~860-8555, Japan}
\email{ynishiyama@kumamoto-u.ac.jp}
\subjclass[2020]{Primary~05E05, Secondary~37K10, 05A10}
\keywords{2-reduced Schur function, Schur's $Q$-function, Littlewood--Richardson coefficient, inverse Kostka matrix, Korteweg--de~Vries equation.}
\date{}
\begin{abstract}
Motivated by Sato and Mori's work on the Korteweg--de~Vries (KdV) equation and the modified KdV equation, Mizukawa, Nakajima, and Yamada made a conjecture on $2$-reduced Schur functions and Schur's $Q$-functions.
The conjecture claims that certain sums of products of a Littlewood--Richardson coefficient and two $2$-reduced Schur functions are equal to Schur's $Q$-functions up to a scalar multiple.
In this paper we give a proof of the conjecture in cases which have not been proved yet.
We introduce a new expression of Schur's $Q$-functions and use it to prove the conjecture.
Combinatorics of the inverse Kostka matrix is also used.
We also provide consideration of the conjecture in general case.
\end{abstract}
\maketitle
\section*{Introduction}
The Korteweg--de~Vries (KdV) equation and the modified KdV equation are non-linear partial differential equations, and it is well-known that the equations have soliton solutions.
Sato (for example {\cite{S1981}}) developed the theory of such equations using the method of algebraic analysis (see {\cite{OSTT1988}}).
\par In {\cite{SM1980}}, Sato and Mori introduced some families of functions of infinitely many time variables to discuss behavior of the solutions of the equations (we refer to their statement in the end of Section \ref{section:Sato_and_Mori}).
These functions can be interpreted as symmetric functions (or symmetric polynomials) by an appropriate change of variables.
\par Nakajima~{\cite{N2019}} interpreted them as symmetric functions using $2$-reduced Schur functions $S_\lambda(x)$ and Littlewood--Richardson coefficients $c^\lambda_{\mu\nu}$.
This is a joint work with H.~Mizukawa and H.-F.~Yamada.
They also made a conjecture that these symmetric functions are equal to Schur's $Q$-functions $Q_\lambda(x,x)$ up to a scalar multiple (the definitions of the symbols are given in Section \ref{section:part_and_sym_poly}):
\begin{align*}
\sum_{\mu,\nu\in\mathscr P^{(n)}}c^\lambda_{\mu\nu}S_{2\mu+\Delta}(x)S_{2\nu+\Delta}(x)&=2^{-n}Q_{2\lambda+2\Delta}(x,x),\\
\sum_{\mu,\nu\in\mathscr P^{(n)}}c^\lambda_{\mu\nu}S_{2\mu+\delta}(x)S_{2\nu+\Delta}(x)&=2^{-n}Q_{2\lambda+\Delta+\delta}(x,x)
\end{align*}
for $n\geq1$ and $\lambda\in\mathscr P^{(n)}$, where $\mathscr P^{(n)}$ is the set of partitions of length at most $n$ and
\[\Delta\coloneqq(n,n-1,\ldots,2,1),\quad\delta\coloneqq(n-1,n-2,\ldots,1,0)\in\mathscr P^{(n)}.\]
Nakajima~{\cite{N2019}} proved the conjecture when $n=1$ and when $n=2$ under the assumption that $l(\lambda)\leq1$.
\par In this paper, we prove the conjecture when $n=2$ without any assumptions.
We use a new expression of Schur's $Q$-functions corresponding to partitions of length $2$ and combinatorics of the inverse Kostka matrix to prove the conjecture.
Consideration of the conjecture in the case of general $n$ is also provided, which reduces itself into a conjecture on Schur's $Q$-functions and the inverse Kostka matrix.
\par This paper consists as follows.
Firstly, we recall some basic concepts about partitions and symmetric polynomials in Section 1.
We refer to some arguments in {\cite{N2019}} and {\cite{SM1980}} about the conjecture in the next two sections.
In Section 2, we refer to the definition of the symmetric polynomials which is introduced by Sato and Mori.
We also refer to their statement about the solutions of the partial differential equations, which is stated by using the symmetric polynomials, in the section.
In Section 3, we refer to the conjecture which Mizukawa, Nakajima, and Yamada made.
We also refer to the proof in {\cite{N2019}} for the special cases of the conjecture.
We consider the conjecture in general case in Section 4, and we prove it when $n=2$ without any assumptions in Section 5.
\section{Partitions and symmetric polynomials}\label{section:part_and_sym_poly}
In this section, we recall some basic notations on partitions of integers and symmetric polynomials.
We mainly follow the notations used in {\cite{M1995}}.
\par A \textit{partition} is a weakly decreasing finite sequence of positive integers.
Let $\mathscr P$ be the set of partitions.
For $\lambda=(\lambda_1,\lambda_2,\ldots,\lambda_l)\in\mathscr P$ with $\lambda_1\geq\lambda_2\geq\cdots\geq\lambda_l>0$, denote $l(\lambda)\coloneqq l$ and call it the \textit{length} of $\lambda$.
Then we have $\lambda_n=0$ for $n>l$.
Write $\mathscr P^{(n)}\coloneqq\{\lambda\in\mathscr P\mid l(\lambda)\leq n\}$ and $\widetilde{\mathscr P}^{(n)}\coloneqq\{\lambda\in\mathscr P\mid\lambda_1\leq n\}$ for $n\geq0$.
For $\lambda\in\mathscr P$ and $i\geq1$, $m_i(\lambda)\coloneqq\#\{j\mid\lambda_j=i\}$ is called the \textit{multiplicity} of $i$ in $\lambda$.
Then $l(\lambda)=m_1(\lambda)+m_2(\lambda)+\cdots+m_n(\lambda)$ holds for $\lambda\in\widetilde{\mathscr P}^{(n)}$.
The partition $\lambda$ is uniquely determined by the sequence $(m_1(\lambda),m_2(\lambda),\ldots)$.
Hence a partition $\lambda\in\mathscr P$ is also written as $\lambda=(1^{m_1(\lambda)}2^{m_2(\lambda)}\cdots)$ by using the multiplicities.
For partitions $\lambda,\mu\in\mathscr P$, $\lambda\cup\mu\in\mathscr P$ is defined by $m_i(\lambda\cup\mu)\coloneqq m_i(\lambda)+m_i(\mu)$ for all $i$.
For $\lambda=(\lambda_1,\lambda_2,\ldots,\lambda_l)\in\mathscr P$, the partition $\lambda'$ is defined by $\lambda'_j\coloneqq\#\{i\mid\lambda_i\geq j\}$.
This $\lambda'$ is called the \textit{conjugate} of $\lambda$.
\par For $n\geq1$, the symmetric group $\mathfrak S_n$ acts on the polynomial ring $\mathbb Q[x_1,x_2,\ldots,x_n]$ by permuting the variables $x=(x_1,x_2,\ldots,x_n)$.
Precisely, the action of $\mathfrak S_n$ is defined by 
\[\sigma f(x_1,x_2,\ldots,x_n)\coloneqq f(x_{\sigma(1)},x_{\sigma(2)},\ldots,x_{\sigma(n)})\]
for $\sigma\in\mathfrak S_n$ and $f\in\mathbb Q[x_1,x_2,\ldots,x_n]$.
A polynomial in $\mathbb Q[x_1,x_2,\ldots,x_n]$ is said to be \textit{symmetric} if it is invariant under this action.
We denote the vector space consisting of symmetric polynomials in $\mathbb Q[x_1,x_2,\ldots,x_n]$ by $\Lambda_n$.
\par Write
\[x^\alpha\coloneqq{x_1}^{\alpha_1}{x_2}^{\alpha_2}\cdots{x_n}^{\alpha_n}\]
for a sequence of $n$ non-negative integers $\alpha=(\alpha_1,\alpha_2,\ldots,\alpha_n)\in(\mathbb Z_{\geq0})^n$.
For $\lambda=(\lambda_1,\lambda_2,\ldots,\lambda_l)\in\mathscr P^{(n)}$, let $m_\lambda(x)\coloneqq\sum_\alpha x^\alpha$, where $\alpha$ runs over distinct finite sequences $(\alpha_1,\alpha_2,\ldots,\alpha_n)$ obtained by permuting the parts of $(\lambda_1,\lambda_2,\ldots,\lambda_l,0,\ldots,0)\in(\mathbb Z_{\geq0})^n$.
This $m_\lambda(x)$ is called the \textit{monomial symmetric polynomial} corresponding to $\lambda$.
Then these $\{m_\lambda(x)\;|\;\lambda\in\mathscr P^{(n)}\}$ of monomial symmetric polynomials form a $\mathbb Q$-basis of $\Lambda_n$.
\par There are other well-known bases of $\Lambda_n$.
The \textit{elementary symmetric polynomial} $e_\lambda(x)$ is defined by
\[e_\lambda(x)\coloneqq m_{(1^{\lambda_1})}(x)m_{(1^{\lambda_2})}(x)\cdots m_{(1^{\lambda_{l(\lambda)}})}(x)\]
for $\lambda\in\widetilde{\mathscr P}^{(n)}$.
Then $\{e_\lambda(x)\;|\;\lambda\in\widetilde{\mathscr P}^{(n)}\}$ forms a $\mathbb Q$-basis of $\Lambda_n$ {\cite[Theorem 5.3.5]{P2015}}.
The \textit{power sum symmetric polynomial} $p_\lambda(x)$ is defined by
\[p_\lambda(x)\coloneqq m_{(\lambda_1)}(x)m_{(\lambda_2)}(x)\cdots m_{(\lambda_{l(\lambda)})}(x)\]
for $\lambda\in\mathscr P$.
Then $\{p_\lambda(x)\;|\;\lambda\in\mathscr P^{(n)}\}$ forms a $\mathbb Q$-basis of $\Lambda_n$ {\cite[Theorem 5.3.9]{P2015}}.
\par Define the antisymmetric polynomial $a_\alpha(x)$ by
\[a_\alpha(x)\coloneqq\sum_{\sigma\in\mathfrak S_n}\mathrm{sgn}(\sigma)\sigma(x^\alpha)\]
for $\alpha=(\alpha_1,\alpha_2,\ldots,\alpha_n)\in(\mathbb Z_{\geq0})^n$, where $\mathrm{sgn}(\sigma)=\pm1$ is the sign of the permutation $\sigma$.
Write $\delta\coloneqq(n-1,n-2,\ldots,1,0)\in\mathscr P^{(n)}$.
Since $a_{\lambda+\delta}(x)$ is divisible by $a_\delta(x)$ for $\lambda\in\mathscr P^{(n)}$, the \textit{Schur polynomial} $s_\lambda(x)\in\mathbb Q[x_1,x_2,\ldots,x_n]$ can be defined by
\[s_\lambda(x)\coloneqq\dfrac{a_{\lambda+\delta}(x)}{a_\delta(x)}\]
for $\lambda\in\mathscr P^{(n)}$.
Then $\{s_\lambda(x)\;|\;\lambda\in\mathscr P^{(n)}\}$ forms a $\mathbb Q$-basis of $\Lambda_n$ {\cite[Theorem 5.4.4]{P2015}}.
\par Since $s_\mu(x)s_\nu(x)\in\Lambda_n$ for $\lambda,\mu\in\mathscr P^{(n)}$, it can be written as a linear combination of Schur polynomials.
We write the coefficients $c^\lambda_{\mu\nu}$:
\[s_\mu(x)s_\nu(x)=\sum_{\lambda\in\mathscr P^{(n)}}c^\lambda_{\mu\nu}s_\lambda(x).\]
This coefficient $c^\lambda_{\mu\nu}$ is called the \textit{Littlewood--Richardson coefficient}.
\par The \textit{Kostka matrix} $(K_{\lambda\mu})_{\lambda,\mu\in\mathscr P^{(n)}}$ is defined as the coefficients that arise when one expresses a Schur polynomial as a linear combination of monomial symmetric polynomials:
\[s_\lambda(x)=\sum_{\mu\in\mathscr P^{(n)}}K_{\lambda\mu}m_\mu(x)\]
for $\lambda\in\mathscr P^{(n)}$.
The entries of the inverse matrix of the Kostka matrix are written as $K^{-1}_{\lambda\mu}$, i.e.,
\[\sum_{\nu\in\mathscr P^{(n)}}K_{\lambda\nu}K^{-1}_{\nu\mu}=\sum_{\nu\in\mathscr P^{(n)}}K^{-1}_{\lambda\nu}K_{\nu\mu}=\delta_{\lambda\mu}\]
for $\lambda,\mu\in\mathscr P^{(n)}$, where $\delta_{\lambda\mu}$ is the Kronecker delta.
Using the inverse Kostka matrix, one can express a Schur polynomial as a linear combination of the elementary symmetric polynomials:
\begin{prop}[see {\cite[Chapter I, p.101]{M1995}}]\label{prop:M101}
For $\lambda\in\mathscr P^{(n)}$, it holds that
\[s_\lambda(x)=\sum_{\mu\in\widetilde{\mathscr P}^{(n)}}K^{-1}_{\mu\lambda'}e_\mu(x).\]
\end{prop}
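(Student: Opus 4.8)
The plan is to deduce the identity from the two structural facts that underlie Macdonald's table of transition matrices: the fundamental involution $\omega$ on symmetric functions, characterized by $\omega(e_r)=h_r$ (with $h_r$ the complete homogeneous symmetric polynomials) and satisfying $\omega(s_\lambda)=s_{\lambda'}$, together with the Hall inner product for which the Schur polynomials are orthonormal and $\langle h_\lambda,m_\mu\rangle=\delta_{\lambda\mu}$. Granting these, the argument is short linear algebra. First I would transpose the defining relation $s_\lambda=\sum_\mu K_{\lambda\mu}m_\mu$ through the inner product: expanding $h_\mu$ in the orthonormal Schur basis and using the duality of the $h$- and $m$-bases gives $h_\mu=\sum_\lambda K_{\lambda\mu}s_\lambda$.

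Next I would apply $\omega$ to this last identity. Since $\omega(h_\mu)=e_\mu$ and $\omega(s_\lambda)=s_{\lambda'}$, and since conjugation $\lambda\mapsto\lambda'$ is an involution on partitions, reindexing by $\nu=\lambda'$ yields
\[e_\mu=\sum_\nu K_{\nu'\mu}s_\nu.\]
This already expresses the elementary symmetric polynomials in the Schur basis, with coefficient matrix obtained from the Kostka matrix by conjugating one index. It remains only to invert it. Fixing $\lambda$ and summing against the inverse Kostka matrix,
\[\sum_\mu K^{-1}_{\mu\lambda'}e_\mu=\sum_\nu\Bigl(\sum_\mu K_{\nu'\mu}K^{-1}_{\mu\lambda'}\Bigr)s_\nu=\sum_\nu\delta_{\nu'\lambda'}s_\nu=s_\lambda,\]
where the inner sum collapses by the defining orthogonality $\sum_\mu K_{\kappa\mu}K^{-1}_{\mu\rho}=\delta_{\kappa\rho}$ (applied with $\kappa=\nu'$, $\rho=\lambda'$) and $\delta_{\nu'\lambda'}=\delta_{\nu\lambda}$.

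The genuinely nontrivial input is the pair of identities $\omega(e_r)=h_r$ and $\omega(s_\lambda)=s_{\lambda'}$; everything after that is bookkeeping, and I could alternatively read the same content off the dual Jacobi--Trudi determinant $s_\lambda=\det(e_{\lambda'_i-i+j})$ at the cost of matching its expansion to the inverse Kostka entries. I expect the main obstacle to be reconciling the index sets, because the excerpt works with finitely many variables. In $n$ variables one has $s_\nu=0$ whenever $l(\nu)>n$, so in the displayed expansion of $e_\mu$ only $\nu\in\mathscr P^{(n)}$ (equivalently $\nu'\in\widetilde{\mathscr P}^{(n)}$) survive, which is exactly why $\mu$ ranges over $\widetilde{\mathscr P}^{(n)}$ in the statement; making this rigorous is cleanest by first proving the relations in the ring of symmetric functions in infinitely many variables, where $K$, $K^{-1}$, and $\omega$ are all available without truncation, and only then specializing to $x=(x_1,\ldots,x_n)$. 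Care is also needed because the orthogonality relation for $K^{-1}$ must be invoked over the correct index set, so I would first record the compatibility of the finite and infinite Kostka matrices under restriction.
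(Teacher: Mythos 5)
The paper contains no proof of this proposition to compare against: it is imported verbatim from Macdonald \cite[Chapter~I, p.~101]{M1995} as background. Your derivation is correct, and it is in substance the standard argument behind Macdonald's table of transition matrices: $\langle h_\mu,m_\nu\rangle=\delta_{\mu\nu}$ together with orthonormality of the Schur basis gives $h_\mu=\sum_\lambda K_{\lambda\mu}s_\lambda$, applying $\omega$ gives $e_\mu=\sum_\nu K_{\nu'\mu}s_\nu$, and contracting against $K^{-1}$ collapses to $s_\lambda$ since $\delta_{\nu'\lambda'}=\delta_{\nu\lambda}$; all sums are finite because the Kostka matrix is block-diagonal by weight. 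The finite-variable subtlety you flag is real and resolves exactly as you anticipate; indeed it is forced on you by the paper's own notation, since the indices $\mu,\lambda'\in\widetilde{\mathscr P}^{(n)}$ may have length exceeding $n$ and so are not literally covered by the paper's definition of $K^{-1}$ over $\mathscr P^{(n)}$ --- the entries must be read as those of the stable (infinite-variable) inverse Kostka matrix. The compatibility you propose to record does hold and admits a one-line proof: within partitions of a fixed weight, $K$ is unitriangular with support contained in $\{(\lambda,\mu)\mid\mu\leq\lambda\}$ for the dominance order, hence so is $K^{-1}$; and $\{\nu\mid l(\nu)\leq n\}$ is an up-set for dominance (if $\nu\geq\mu$ and $l(\mu)\leq n$, then $\nu_1+\cdots+\nu_n\geq\mu_1+\cdots+\mu_n=|\mu|$ forces $l(\nu)\leq n$), so in $\sum_\mu K_{\lambda\mu}K^{-1}_{\mu\nu}=\delta_{\lambda\nu}$ the terms with $\mu$ outside the index set vanish and restriction commutes with inversion. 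Specializing the stable identity to $n$ variables then kills exactly the terms with $\mu_1>n$ (because $e_r(x)=0$ for $r>n$), producing the stated sum over $\widetilde{\mathscr P}^{(n)}$. Your alternative route via the dual Jacobi--Trudi determinant $s_\lambda=\det(e_{\lambda'_i-i+j})$ would also work and is closer in spirit to the combinatorial treatment of $K^{-1}$ the paper later borrows from \cite{ER1990}, but the $\omega$-duality argument you chose is the cleaner bookkeeping.
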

\par One can consider the compound of the variables: for two sets of variables
\[x=(x_1,x_2,\ldots,x_n), \quad y=(y_1,y_2,\ldots,y_n)\]
and a symmetric polynomial $f(x,y)$ in variables $x\cup y=(x_1,x_2,\ldots,x_n,y_1,y_2,\ldots,y_n)$, one obtains a symmetric polynomial $f(x,x)$ in variables $x$ by putting $x_i=y_i$ for all $1\leq i\leq n$.
\par The sequence of symmetric polynomials $\{q_r(x)\mid r\geq0\}$ is defined by
\[\sum_{r=0}^\infty q_r(x)t^r\coloneqq\prod_{i=1}^n\dfrac{1+x_it}{1-x_it}=\prod_{i=1}^n(1+x_it)(1+x_it+x_i^2t^2+\cdots).\]
We write $q_r(x)\coloneqq0$ if $r<0$.
Define the \textit{$2$-reduced Schur polynomials} $S_\lambda(x)$ (see~{\cite{ANY1999}}) using $q_r(x)$ as
\[S_\lambda(x)\coloneqq\det(q_{\lambda_i-i+j}(x))_{1\leq i,j\leq n}\]
for $\lambda\in\mathscr P^{(n)}$.
The Schur polynomials and the $2$-reduced Schur polynomials satisfy the following proposition:
\begin{prop}[see {\cite[Chapter III, (4.7)]{M1995}}]\label{prop:M225(4.7)}
Let $x=(x_1,x_2,\ldots,x_n)$, $y=(y_1,y_2,\ldots,y_n)$ be two sets of $n$ variables.
Then it holds that
\[\sum_{\lambda\in\mathscr P^{(n)}}s_\lambda(y)S_\lambda(x)=\prod_{i=1}^n\prod_{j=1}^n\dfrac{1+x_iy_j}{1-x_iy_j}.\]
\end{prop}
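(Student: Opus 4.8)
The plan is to recognize this as a dual Cauchy identity and to prove it by combining the bialternant definition of the Schur polynomial with the Cauchy--Binet formula, as the determinantal form of both sides suggests. Writing $s_\lambda(y)=a_{\lambda+\delta}(y)/a_\delta(y)$ and clearing the common denominator $a_\delta(y)$, it suffices to establish the identity
\[
\sum_{\lambda\in\mathscr P^{(n)}}a_{\lambda+\delta}(y)\,S_\lambda(x)=a_\delta(y)\prod_{i=1}^n\prod_{j=1}^n\dfrac{1+x_iy_j}{1-x_iy_j},
\]
understood as an equality of formal power series in the $y_i$. The first move is to rewrite the right-hand side as a single determinant. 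Setting $G(t)\coloneqq\prod_{i=1}^n\frac{1+x_it}{1-x_it}=\sum_{r\geq0}q_r(x)t^r$, the double product factors as $\prod_{j=1}^nG(y_j)$, and since $a_\delta(y)=\det(y_i^{\,n-j})_{1\leq i,j\leq n}$, pulling the scalar $G(y_i)$ into the $i$-th row of the bialternant gives
\[
a_\delta(y)\prod_{j=1}^nG(y_j)=\det\!\bigl(y_i^{\,n-j}G(y_i)\bigr)_{1\leq i,j\leq n}=\det\!\Bigl(\textstyle\sum_{m\geq0}q_{m-n+j}(x)\,y_i^{\,m}\Bigr)_{1\leq i,j\leq n}.
\]

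Next I would factor this matrix as a product $Y\,Q$, where $Y=(y_i^{\,m})_{1\leq i\leq n,\;m\geq0}$ is an $n\times\infty$ matrix and $Q=(q_{m-n+j}(x))_{m\geq0,\;1\leq j\leq n}$ is an $\infty\times n$ matrix, and apply the Cauchy--Binet formula. This expresses the determinant as a sum over strictly increasing index sequences $0\leq m_1<\cdots<m_n$ of the product of the $n\times n$ minor of $Y$ on columns $m_1,\ldots,m_n$ with the corresponding minor of $Q$ on rows $m_1,\ldots,m_n$. The standard bijection $m_k=(\lambda+\delta)_{n-k+1}$, that is $\{m_1,\ldots,m_n\}=\{\lambda_i+n-i\}_{i=1}^n$, identifies these sequences with partitions $\lambda\in\mathscr P^{(n)}$.

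It then remains to match the two minors with $a_{\lambda+\delta}(y)$ and $S_\lambda(x)$ and to check the signs. Reversing the columns of the $Y$-minor (to pass from increasing $m_k$ to the decreasing exponents $(\lambda+\delta)_j$) produces $(-1)^{\binom n2}a_{\lambda+\delta}(y)$, while reversing the rows of the $Q$-minor turns its $(i,j)$ entry into $q_{\lambda_i-i+j}(x)$ and produces $(-1)^{\binom n2}S_\lambda(x)$. The two sign factors multiply to $+1$, so each term of the Cauchy--Binet sum is exactly $a_{\lambda+\delta}(y)\,S_\lambda(x)$, which yields the displayed identity after dividing by $a_\delta(y)$. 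The step I expect to be the main obstacle is the bookkeeping of this last paragraph: justifying the Cauchy--Binet expansion for the infinite factorization (legitimate since we work with formal power series in $y$, each monomial collecting only finitely many contributions) and, above all, tracking the reindexing and the two sign contributions carefully enough to confirm that they cancel rather than reinforce.
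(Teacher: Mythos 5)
Your proof is correct, but there is nothing in the paper to compare it against: the author does not prove this proposition, instead quoting it from Macdonald \cite[Chapter III, (4.7)]{M1995} (it is the $t=-1$ specialization of the Hall--Littlewood identity $\sum_\lambda s_\lambda(y)S_\lambda(x;t)=\prod_{i,j}(1-tx_iy_j)/(1-x_iy_j)$). Your Cauchy--Binet argument is the standard self-contained derivation and it checks out in detail: clearing $a_\delta(y)$, absorbing $G(y_i)=\sum_{r\ge0}q_r(x)y_i^r$ into row $i$ of the Vandermonde, and factoring as an $n\times\infty$ times $\infty\times n$ product is legitimate in the formal power series ring, since the coefficient of any fixed monomial $y^\alpha$ in $\det(YQ)$ receives contributions from only finitely many column selections; the bijection $\{m_1<\cdots<m_n\}=\{\lambda_i+n-i\}$ is the right one; and the two reversal signs are each $(-1)^{\binom n2}$ (the reversal permutation has $\binom n2$ inversions), so they cancel as you claim, giving $a_{\lambda+\delta}(y)S_\lambda(x)$ termwise. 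It is worth observing that your argument is not foreign to the paper: in Section 4 the author expands $a_\delta(y)\prod_{i=1}^n\prod_{j=1}^n\frac{1+x_iy_j}{1-x_iy_j}$ as $\sum_{\sigma\in\mathfrak S_n}\mathrm{sgn}(\sigma)\sum_{s}\prod_{j=1}^nq_{s_{\sigma(j)}-(n-j)}(x)\,y_{\sigma(j)}^{s_{\sigma(j)}}$, which is precisely your Cauchy--Binet expansion unwound by hand into permutations and exponent sequences --- but there this identity is \emph{derived from} the quoted proposition, whereas you run the computation in the opposite direction to \emph{prove} it. So your route buys a self-contained proof at essentially no extra cost, while the paper's citation buys brevity and the generality of the $t$-deformed statement.
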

\par\textit{Schur's $Q$-polynomials} $Q_\lambda(x)$, also known as \textit{Schur's $Q$-functions}, are also defined by using $q_r(x)$.
For $r,s\geq0$, $Q_{(r,s)}(x)$ is defined by
\[Q_{(r,s)}(x)\coloneqq
\begin{cases}
q_r(x)q_s(x)+2\displaystyle{\sum_{i=1}^s}(-1)^iq_{r+i}(x)q_{s-i}(x)&((r,s)\ne(0,0))\\
0&((r,s)=(0,0)).
\end{cases}\]
For $\lambda=(\lambda_1,\lambda_2,\ldots,\lambda_{2n})\in\mathscr P^{(2n)}$ ($\lambda_1\geq\lambda_2\geq\cdots\geq\lambda_{2n-1}>0$ and $\lambda_{2n}$ may be $0$), $Q_\lambda(x)$ is defined by using $Q_{(r,s)}(x)$.
Since $Q_{(r,s)}(x)=-Q_{(s,r)}(x)$ holds for all $r$ and $s$, the $2n\times2n$ matrix $(Q_{(\lambda_i,\lambda_j)}(x))_{1\leq i,j\leq2n}$ is skew-symmetric and $Q_\lambda(x)$ is defined as the Pfaffian of the matrix:
\[Q_\lambda(x)\coloneqq\mathrm{Pf}((Q_{(\lambda_i,\lambda_j)}(x))_{1\leq i,j\leq2n}).\]
By compounding the variables, $\{q_r(x,x)\mid r\geq0\}$ is defined by
\[\sum_{r=0}^\infty q_r(x,x)t^r\coloneqq\prod_{i=1}^n\left(\dfrac{1+x_it}{1-x_it}\right)^2,\]
and Schur's $Q$-polynomials $Q_\lambda(x,x)$ are defined by using $\{q_r(x,x)\mid r\geq0\}$ similarly.
\section{Sato and Mori's symmetric polynomials}\label{section:Sato_and_Mori}
Sato and Mori~{\cite{SM1980}} introduced the symmetric polynomials (we write them as $f_\lambda(x)$ and $g_\lambda(x)$ in the present paper) to study the KdV (and modified KdV) equation.
In this section we define the polynomials following {\cite{SM1980}}, and express them by $2$-reduced Schur polynomials.
The method and some notations used in the section (including $f_\lambda(x)$ and $g_\lambda(x)$) are due to {\cite{N2019}}.
\par For a function $f(y)=f(y_1,y_2,\ldots,y_n)$ of variables $y=(y_1,y_2,\ldots,y_n)$, define its \textit{totally even part} $\mathrm{TE}_y(f(y))$ and \textit{totally odd part} $\mathrm{TO}_y(f(y))$ as follows:
\begin{align*}
&\mathrm{TE}_y(f(y_1,y_2,\ldots,y_n))\coloneqq2^{-n}\sum_{\epsilon_1,\epsilon_2,\ldots,\epsilon_n\in\{\pm1\}}f(\epsilon_1y_1,\epsilon_2y_2,\ldots,\epsilon_ny_n),\\
&\mathrm{TO}_y(f(y_1,y_2,\ldots,y_n))\coloneqq2^{-n}\sum_{\epsilon_1,\epsilon_2,\ldots,\epsilon_n\in\{\pm1\}}(-1)^{\alpha(\epsilon)}f(\epsilon_1y_1,\epsilon_2y_2,\ldots,\epsilon_ny_n),
\end{align*}
where $\alpha(\epsilon)\coloneqq\#\{1\leq i\leq n\mid\epsilon_i=-1\}$.
When $f(y)$ is a polynomial in $y_1,y_2,\ldots,y_n$ and written as the form
\[f(y)=\sum_{a_1,a_2,\ldots,a_n\geq0}c_{a_1,a_2,\ldots,a_n}y_1^{a_1}y_2^{a_2}\cdots y_n^{a_n}\]
with scalar $c_{a_1,a_2,\ldots,a_n}$, the equalities
\begin{align*}
\mathrm{TE}_y(f(y))&=\sum_{\substack{a_1,a_2,\ldots,a_n\geq0\\a_1,a_2,\ldots,a_n\text{:even}}}c_{a_1,a_2,\ldots,a_n}y_1^{a_1}y_2^{a_2}\cdots y_n^{a_n},\\
\mathrm{TO}_y(f(y))&=\sum_{\substack{a_1,a_2,\ldots,a_n\geq0\\a_1,a_2,\ldots,a_n\text{:odd}}}c_{a_1,a_2,\ldots,a_n}y_1^{a_1}y_2^{a_2}\cdots y_n^{a_n}
\end{align*}
hold.
\par Define functions $\Phi^\pm(x,y)$ of variables $(x,y)=(x_1,x_2,\ldots,x_n,y_1,y_2,\ldots,y_n)$ as follows:
\begin{align*}
\Phi^+(x,y)&\coloneqq\dfrac{1}{a_\delta(y^2)}\mathrm{TE}_y\left(a_\delta(y)\exp\left(\sum_{\substack{m\geq1\\m\text{:odd}}}\dfrac{2}{m}p_{(m)}(x)p_{(m)}(y)\right)\right),\\
\Phi^-(x,y)&\coloneqq\dfrac{1}{a_\delta(y^2)}\mathrm{TO}_y\left(a_\delta(y)\exp\left(\sum_{\substack{m\geq1\\m\text{:odd}}}\dfrac{2}{m}p_{(m)}(x)p_{(m)}(y)\right)\right),
\end{align*}
where $y^2\coloneqq(y_1^2,y_2^2,\ldots,y_n^2)$.
These $\Phi^\pm(x,y)$ satisfy the following proposition:
\begin{prop}[\cite{N2019}]\label{prop:Phi^+-}
The equalities
\begin{align*}
\Phi^+(x,y)&=\sum_{\lambda\in\mathscr P^{(n)}}s_\lambda(y^2)S_{2\lambda+\delta}(x),\\
\Phi^-(x,y)&=\sum_{\lambda\in\mathscr P^{(n)}}y_1y_2\cdots y_ns_\lambda(y^2)S_{2\lambda+\Delta}(x)
\end{align*}
hold with $\Delta\coloneqq(n,n-1,\ldots,2,1)$, $\delta\coloneqq(n-1,n-2,\ldots,1,0)\in\mathscr P^{(n)}$, where
$2\lambda\coloneqq(2\lambda_1,2\lambda_2,\ldots,2\lambda_n)$ and $\lambda+\mu\coloneqq(\lambda_1+\mu_1,\lambda_2+\mu_2,\ldots,\lambda_n+\mu_n)$ for $\lambda,\mu\in\mathscr P^{(n)}$.
\end{prop}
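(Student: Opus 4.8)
\emph{Proof proposal.} The plan is to route everything through the dual Cauchy identity of Proposition~\ref{prop:M225(4.7)} and then read off the totally even and totally odd parts by a parity count. First I would rewrite the exponential factor. Since $\log\frac{1+x_it}{1-x_it}=2\sum_{m\ge1,\,m\text{ odd}}\frac{(x_it)^m}{m}$, summing over $i$ and comparing with the defining generating function of the $q_r(x)$ gives
\[
\exp\!\left(\sum_{\substack{m\ge1\\m\text{ odd}}}\frac{2}{m}p_{(m)}(x)p_{(m)}(y)\right)=\prod_{j=1}^n\sum_{r\ge0}q_r(x)y_j^r=\prod_{i=1}^n\prod_{j=1}^n\frac{1+x_iy_j}{1-x_iy_j}.
\]
Proposition~\ref{prop:M225(4.7)} then turns the right-hand side into $\sum_{\mu\in\mathscr P^{(n)}}s_\mu(y)S_\mu(x)$. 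Multiplying by $a_\delta(y)$ and using $a_\delta(y)s_\mu(y)=a_{\mu+\delta}(y)$ reduces both identities to understanding $\mathrm{TE}_y$ and $\mathrm{TO}_y$ applied to $\sum_{\mu\in\mathscr P^{(n)}}a_{\mu+\delta}(y)S_\mu(x)$.

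The next step is the combinatorial core. Every monomial of the antisymmetric polynomial $a_{\mu+\delta}(y)$ is, up to sign, a permutation of $y_1^{(\mu+\delta)_1}\cdots y_n^{(\mu+\delta)_n}$, so $\mathrm{TE}_y(a_{\mu+\delta}(y))$ survives only when every entry $(\mu+\delta)_i=\mu_i+n-i$ is even, while $\mathrm{TO}_y(a_{\mu+\delta}(y))$ survives only when every such entry is odd. I would verify that the first parity condition holds exactly for $\mu=2\lambda+\delta$ and the second exactly for $\mu=2\lambda+\Delta$, with $\lambda\in\mathscr P^{(n)}$. The key point is that the condition forces consecutive parts $\mu_i,\mu_{i+1}$ to have opposite parity, so the candidate $\lambda_i\coloneqq\bigl((\mu-\delta)_i\bigr)/2$ (respectively $\bigl((\mu-\Delta)_i\bigr)/2$) is automatically a nonnegative weakly decreasing sequence, i.e.\ a genuine partition of length at most $n$; conversely these $\mu$ visibly satisfy the parity condition. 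This yields a bijection between the surviving terms and $\mathscr P^{(n)}$.

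It then remains to evaluate the surviving antisymmetric polynomials. For $\mu=2\lambda+\delta$ one has $(\mu+\delta)_i=2(\lambda+\delta)_i$, and since all exponents are even $\mathrm{TE}_y(a_{2(\lambda+\delta)}(y))=a_{2(\lambda+\delta)}(y)=a_{\lambda+\delta}(y^2)$. For $\mu=2\lambda+\Delta$ one has $(\mu+\delta)_i=2(\lambda+\delta)_i+1$, so factoring one power of each variable gives $\mathrm{TO}_y(a_{2(\lambda+\delta)+(1,\dots,1)}(y))=y_1\cdots y_n\,a_{\lambda+\delta}(y^2)$. Dividing by $a_\delta(y^2)$ and recognising $s_\lambda(y^2)=a_{\lambda+\delta}(y^2)/a_\delta(y^2)$ produces the two claimed expansions.

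The main obstacle I anticipate is the parity classification in the middle paragraph: one must check carefully that $(\mu-\delta)/2$ and $(\mu-\Delta)/2$ really land in $\mathscr P^{(n)}$ (nonnegativity of the last part, which for the odd case needs $\mu_n\ge1$, together with the weakly decreasing property), and that no $\mu$ outside the families $2\lambda+\delta$ and $2\lambda+\Delta$ slips through either projection. The clean bookkeeping of the shifts $\delta_i=n-i$ and $\Delta_i=n-i+1$ against the parity of $\mu_i$ is exactly what separates the two cases as stated. (Should Proposition~\ref{prop:M225(4.7)} be unavailable, the same conclusion can be reached by writing $a_\delta(y)\prod_j\sum_r q_r(x)y_j^r$ as a single determinant whose columns each involve one variable, pushing $\mathrm{TE}_y$ and $\mathrm{TO}_y$ inside, and applying a Cauchy--Binet expansion to recover the Jacobi--Trudi determinants defining $S_{2\lambda+\delta}(x)$ and $S_{2\lambda+\Delta}(x)$.)
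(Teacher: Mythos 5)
Your proposal is correct and follows essentially the same route as the paper's proof: the same logarithm computation identifying the exponential with $\prod_{i,j}\frac{1+x_iy_j}{1-x_iy_j}$, the same appeal to Proposition~\ref{prop:M225(4.7)} and the identity $a_\delta(y)s_\mu(y)=a_{\mu+\delta}(y)$, and the same parity filtering under $\mathrm{TE}_y$ and $\mathrm{TO}_y$ culminating in $a_{2\lambda+2\delta}(y)=a_{\lambda+\delta}(y^2)$. In fact you supply more detail than the paper at the one step it leaves implicit (that the surviving $\mu$ are exactly those of the form $2\lambda+\delta$, resp.\ $2\lambda+\Delta$, with $\lambda\in\mathscr P^{(n)}$), and your verification of that bijection is sound.
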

\begin{proof}
Since
\begin{align*}
&\log\prod_{i=1}^n\prod_{j=1}^n\dfrac{1+x_iy_j}{1-x_iy_j}=\sum_{i=1}^n\sum_{j=1}^n(\log(1+x_iy_j)-\log(1-x_iy_j))\\
&=\sum_{i=1}^n\sum_{j=1}^n\left(\sum_{m\geq1}\dfrac{(-1)^{m+1}}{m}(x_iy_i)^m+\sum_{m\geq1}\dfrac{1}{m}(x_iy_j)^m\right)\\
&=\sum_{i=1}^n\sum_{j=1}^n\sum_{m\geq1}\dfrac{(-1)^{m+1}+1}{m}(x_iy_j)^m\\
&=\sum_{i=1}^n\sum_{j=1}^n\sum_{\substack{m\geq1\\m\text{:odd}}}\dfrac{2}{m}(x_iy_j)^m
=\sum_{\substack{m\geq1\\m\text{:odd}}}\dfrac{2}{m}p_{(m)}(x)p_{(m)}(y),
\end{align*}
we have
\begin{align*}
&a_\delta(y)\exp\left(\sum_{\substack{m\geq1\\m\text{:odd}}}\dfrac{2}{m}p_{(m)}(x)p_{(m)}(y)\right)
=a_\delta(y)\prod_{i=1}^n\prod_{j=1}^n\dfrac{1+x_iy_j}{1-x_iy_j}
=a_\delta(y)\sum_{\mu\in\mathscr P^{(n)}}s_\mu(y)S_\mu(x)\\
&=\sum_{\mu\in\mathscr P^{(n)}}a_{\mu+\delta}(y)S_\mu(x)
\end{align*}
from Proposition \ref{prop:M225(4.7)}.
Therefore,
\begin{align*}
\Phi^+(x,y)
&=\dfrac{1}{a_\delta(y^2)}\mathrm{TE}_y\left(a_\delta(y)\exp\left(\sum_{\substack{m\geq1\\m\text{:odd}}}\dfrac{2}{m}p_{(m)}(x)p_{(m)}(y)\right)\right)\\
&=\dfrac{1}{a_\delta(y^2)}\mathrm{TE}_y\left(\sum_{\mu\in\mathscr P^{(n)}}a_{\mu+\delta}(y)S_\mu(x)\right)
\end{align*}
hold.
Partitions $\mu$ corresponding to the remaining summands can be written as $2\lambda+\delta$ by $\lambda\in\mathscr P^{(n)}$, and we obtain
\begin{align*}
&\Phi^+(x,y)
=\dfrac{1}{a_\delta(y^2)}\sum_{\lambda\in\mathscr P^{(n)}}a_{2\lambda+2\delta}(y)S_{2\lambda+\delta}(x)
=\dfrac{1}{a_\delta(y^2)}\sum_{\lambda\in\mathscr P^{(n)}}a_{\lambda+\delta}(y^2)S_{2\lambda+\delta}(x)\\
&=\sum_{\lambda\in\mathscr P^{(n)}}s_\lambda(y^2)S_{2\lambda+\delta}(x)
\end{align*}
as desired.
The second formula can be proved similarly.
\end{proof}
\par The polynomial $a_\delta(y^2)$ is skew-symmetric for the variables $y$, and $\Phi^\pm(x,y)$ are symmetric both for $x$, and for $y$.
Hence there uniquely exist families of symmetric polynomials $\{f_\lambda(x)\mid\lambda\in\mathscr P^{(n)}\}$ and $\{g_\lambda(x)\mid\lambda\in\mathscr P^{(n)}\}$ satisfying the following:
\begin{align*}
a_\delta(y^2)(\Phi^-(x,y))^2&=\sum_{\lambda\in\mathscr P^{(n)}}a_{\lambda+\Delta}(y^2)f_\lambda(x),\\
a_\delta(y^2)\Phi^+(x,y)\Phi^-(x,y)&=\sum_{\lambda\in\mathscr P^{(n)}}y_1y_2\cdots y_na_{\lambda+\Delta}(y^2)g_\lambda(x).
\end{align*}
These families are expressed by $2$-reduced Schur polynomials.
\begin{prop}[\cite{N2019}]\label{prop:f,g=cSS}
The equalities
\begin{align*}
f_\lambda(x)&=\sum_{\mu,\nu\in\mathscr P^{(n)}}c^\lambda_{\mu\nu}S_{2\mu+\Delta}(x)S_{2\nu+\Delta}(x),\\
g_\lambda(x)&=\sum_{\mu,\nu\in\mathscr P^{(n)}}c^\lambda_{\mu\nu}S_{2\mu+\delta}(x)S_{2\nu+\Delta}(x)
\end{align*}
hold for $\lambda\in\mathscr P^{(n)}$.
\end{prop}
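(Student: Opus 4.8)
The plan is to substitute the Schur expansions of $\Phi^{\pm}(x,y)$ furnished by Proposition~\ref{prop:Phi^+-} into the two relations that define $f_\lambda$ and $g_\lambda$, to expand the products by the Littlewood--Richardson rule $s_\mu(y^2)s_\nu(y^2)=\sum_{\lambda\in\mathscr P^{(n)}}c^\lambda_{\mu\nu}s_\lambda(y^2)$, and then to read off the coefficients by comparing against a linearly independent family of antisymmetric polynomials in the squared variables $y^2=(y_1^2,\ldots,y_n^2)$. Since the families $\{f_\lambda\}$ and $\{g_\lambda\}$ have already been shown to exist and be unique, it is enough to produce the claimed expansions and check that they satisfy the defining relations.

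First I would treat $f_\lambda$. By Proposition~\ref{prop:Phi^+-},
\[
(\Phi^-(x,y))^2=y_1^2\cdots y_n^2\sum_{\mu,\nu\in\mathscr P^{(n)}}s_\mu(y^2)s_\nu(y^2)\,S_{2\mu+\Delta}(x)S_{2\nu+\Delta}(x),
\]
and multiplying by $a_\delta(y^2)$, applying the Littlewood--Richardson rule, and using $a_\delta(y^2)s_\lambda(y^2)=a_{\lambda+\delta}(y^2)$ rewrites the right-hand side as
\[
y_1^2\cdots y_n^2\sum_{\lambda\in\mathscr P^{(n)}}a_{\lambda+\delta}(y^2)\sum_{\mu,\nu\in\mathscr P^{(n)}}c^\lambda_{\mu\nu}S_{2\mu+\Delta}(x)S_{2\nu+\Delta}(x).
\]
The decisive bookkeeping step is that multiplying an antisymmetric polynomial $a_\beta(y^2)$ by $y_1^2\cdots y_n^2$ raises each exponent vector by $(1^n)$, whence $y_1^2\cdots y_n^2\,a_{\lambda+\delta}(y^2)=a_{\lambda+\delta+(1^n)}(y^2)=a_{\lambda+\Delta}(y^2)$ because $\delta+(1^n)=\Delta$. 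Matching this with $a_\delta(y^2)(\Phi^-)^2=\sum_\lambda a_{\lambda+\Delta}(y^2)f_\lambda(x)$ and invoking the linear independence of the $a_{\lambda+\Delta}(y^2)$ (their exponents $\lambda+\Delta$ are pairwise distinct strictly decreasing sequences) gives the formula for $f_\lambda$.

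The computation for $g_\lambda$ is entirely parallel, the sole difference being that the product $\Phi^+(x,y)\Phi^-(x,y)$ carries a single factor $y_1\cdots y_n$ rather than its square, namely
\[
\Phi^+(x,y)\Phi^-(x,y)=y_1\cdots y_n\sum_{\mu,\nu\in\mathscr P^{(n)}}s_\mu(y^2)s_\nu(y^2)\,S_{2\mu+\delta}(x)S_{2\nu+\Delta}(x);
\]
after multiplying by $a_\delta(y^2)$ and applying the same two identities, the right-hand side becomes $y_1\cdots y_n\sum_\lambda a_{\lambda+\delta}(y^2)\sum_{\mu,\nu}c^\lambda_{\mu\nu}S_{2\mu+\delta}(x)S_{2\nu+\Delta}(x)$, and comparing coefficients of the linearly independent family $\{y_1\cdots y_n\,a_{\lambda+\delta}(y^2)\}_{\lambda\in\mathscr P^{(n)}}$ yields the formula for $g_\lambda$. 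I do not anticipate a serious obstacle: once $\Phi^{\pm}$ are in Schur form the argument is a direct coefficient comparison, and the only point that requires genuine care is the exponent bookkeeping linking the shift vectors $\delta$ and $\Delta$ to the number of factors of $y_1\cdots y_n$ carried by each product (two for $f_\lambda$, one for $g_\lambda$), which is exactly what produces the shift by $\Delta$ in the $f$-relation.
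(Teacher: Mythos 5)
Your argument is correct and is essentially the paper's own proof: you substitute the expansions from Proposition~\ref{prop:Phi^+-}, expand by the Littlewood--Richardson rule, absorb the square factor via $y_1^2\cdots y_n^2\,a_{\lambda+\delta}(y^2)=a_{\lambda+\Delta}(y^2)$, and read off the coefficients against the defining relations using the uniqueness (linear independence) of the antisymmetric family, which is exactly the computation the paper performs for $f_\lambda$ before declaring the $g_\lambda$ case similar. One remark: in the $g$ case your comparison family $\{y_1\cdots y_n\,a_{\lambda+\delta}(y^2)\}$ is the consistent one --- the paper's displayed defining relation for $g_\lambda$ writes $y_1\cdots y_n\,a_{\lambda+\Delta}(y^2)$, which appears to be a typo, since (as the $n=1$ check $\Phi^+\Phi^-=\sum_{\lambda_1\ge0}y^{2\lambda_1+1}\sum_{k+l=\lambda_1}q_{2k}(x)q_{2l+1}(x)$ confirms) the expansion produces $a_{\lambda+\delta}(y^2)$ there, and only with that indexing does the stated formula for $g_\lambda$ follow.
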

\begin{proof}
Using Proposition \ref{prop:Phi^+-} one obtains
\begin{align*}
a_\delta(y^2)(\Phi^-(x,y))^2
&=a_\delta(y^2)\left(\sum_{\mu\in\mathscr P^{(n)}}y_1y_2\cdots y_ns_\mu(y^2)S_{2\mu+\Delta}(x)\right)^2\\
&=a_\delta(y^2)\sum_{\mu,\nu\in\mathscr P^{(n)}}y_1^2y_2^2\cdots y_n^2s_\mu(y^2)s_\nu(y^2)S_{2\mu+\Delta}(x)S_{2\nu+\Delta}(x)\\
&=a_\delta(y^2)\sum_{\lambda,\mu,\nu\in\mathscr P^{(n)}}y_1^2y_2^2\cdots y_n^2c^\lambda_{\mu\nu}s_\lambda(y^2)S_{2\mu+\Delta}(x)S_{2\nu+\Delta}(x)\\
&=\sum_{\lambda,\mu,\nu\in\mathscr P^{(n)}}y_1^2y_2^2\cdots y_n^2c^\lambda_{\mu\nu}a_{\lambda+\delta}(y^2)S_{2\mu+\Delta}(x)S_{2\nu+\Delta}(x)\\
&=\sum_{\lambda,\mu,\nu\in\mathscr P^{(n)}}c^\lambda_{\mu\nu}a_{\lambda+\Delta}(y^2)S_{2\mu+\Delta}(x)S_{2\nu+\Delta}(x)\\
&=\sum_{\lambda\in\mathscr P^{(n)}}a_{\lambda+\Delta}(y^2)\sum_{\mu,\nu\in\mathscr P^{(n)}}c^\lambda_{\mu\nu}S_{2\mu+\Delta}(x)S_{2\nu+\Delta}(x),
\end{align*}
and therefore,
\[f_\lambda(x)=\sum_{\mu,\nu\in\mathscr P^{(n)}}c^\lambda_{\mu\nu}S_{2\mu+\Delta}(x)S_{2\nu+\Delta}(x)\]
holds from the definition of $f_\lambda(x)$.
The expression of $g_\lambda(x)$ is also derived in a similar way.
\end{proof}
The families $\{f_\lambda(x)\mid\lambda\in\mathscr P^{(n)}\}$ and $\{g_\lambda(x)\mid\lambda\in\mathscr P^{(n)}\}$ are introduced by Sato and Mori~{\cite{SM1980}} originally (in some different notation) to state about behavior of the solutions of the KdV (and modified KdV) equation.
They stated the following as a theorem without any proof:
\begin{thm}[\cite{SM1980}]\label{thm:Sato_and_Mori}
Let $u=u(s)$ be a function of a countably infinite number of variables $s=(s_0,s_1,\ldots)$ which satisfies the KdV equation
\[u_{s_1}=(u_{s_0s_0}+3u^2)_{s_0},\]
and $v=v(s)$ be a function of the same variables $s$ which satisfies the modified KdV equation
\[v_{s_1}=(v_{s_0s_0}-2v^3)_{s_0}.\]
Here the subscripts represent the derivatives.
Let $f=f(s)$ and $g=g(s)$ be functions of $s$ which satisfy
\[u=(2\log f)_{s_0s_0}\quad\text{and}\quad v=\left(\log\dfrac{f}{g}\right)_{s_0}.\]
For $\lambda\in\mathscr P^{(n)}$, define functions $\tilde f_\lambda(t)$ and $\tilde g_\lambda(t)$ of variables $t=(t_0,t_1,\ldots)$ by changing the variables of $f_\lambda(x)$ and $g_\lambda(x)$ respectively, as
\[p_{(2m+1)}(x)\mapsto\dfrac{2m+1}{4}t_m.\]
Then there exist families
\[\bigcup_{n=0}^\infty\{K_{2\lambda+2\Delta_n}(u)\mid\lambda\in\mathscr P^{(n)}\}\quad\text{and}\quad\bigcup_{n=0}^\infty\{K_{2\lambda+\Delta_n+\delta_n}(u)\mid\lambda\in\mathscr P^{(n)}\}\]
of polynomials in $u,u_{s_0},u_{s_0s_0},\ldots$, where
\[\delta_n\coloneqq(n-1,n-2,\ldots,1,0)\quad\text{and}\quad\Delta_n\coloneqq(n,n-1,\ldots,2,1)\in\mathscr P^{(n)},\]
such that
\begin{align*}
\dfrac{f(s+t)f(s-t)}{f(s)^2}&=\sum_{n=0}^\infty\sum_{\lambda\in\mathcal P^{(n)}}2^n\tilde f_\lambda(t)K_{2\lambda+2\Delta_n}(u),\\
\dfrac{f(s+t)g(s-t)}{f(s)g(s)}&=\sum_{n=0}^\infty\sum_{\lambda\in\mathcal P^{(n)}}2^n\tilde g_\lambda(t)K_{2\lambda+\Delta_n+\delta_n}(u)
\end{align*}
hold.
\end{thm}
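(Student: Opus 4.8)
The plan is to read both left-hand sides as the Hirota bilinear combinations attached to a KdV $\tau$-function and to expand them through the same boson--fermion mechanism that already underlies Propositions~\ref{prop:Phi^+-} and~\ref{prop:f,g=cSS}. First I would regard $f(s)$ (and likewise $g(s)$) as a $\tau$-function of the full KdV hierarchy in the times $s=(s_0,s_1,\ldots)$, so that $u=(2\log f)_{s_0s_0}$ solves not merely the displayed KdV equation but every flow of the hierarchy. Writing $\phi\coloneqq\log f$ and using that
\[\log\frac{f(s+t)f(s-t)}{f(s)^2}=\phi(s+t)+\phi(s-t)-2\phi(s)\]
is even in $t$, a Taylor expansion shows that only the even-order derivatives $\partial_s^\alpha\phi$ with $|\alpha|\geq2$ contribute; exponentiating then exhibits $f(s+t)f(s-t)/f(s)^2$ as a formal power series in $t$ whose coefficients are differential polynomials in the derivatives of $\phi$.

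The second step is to rewrite those coefficients as polynomials in $u,u_{s_0},u_{s_0s_0},\ldots$ alone. This is where the hierarchy is essential rather than the single KdV equation: one uses the standard facts that every second logarithmic derivative $\partial_{s_i}\partial_{s_j}\log f$ is a differential polynomial in $u$ in the single variable $s_0$ (since $\phi_{s_0s_0}=\tfrac12u$), and that every flow $\partial_{s_j}u$ is itself such a differential polynomial. Combining these by repeated differentiation, each $\partial_s^\alpha\phi$ with $|\alpha|\geq2$ collapses to a differential polynomial in the $s_0$-derivatives of $u$. This produces the candidate families $\{K_{2\lambda+2\Delta_n}(u)\}$ and $\{K_{2\lambda+\Delta_n+\delta_n}(u)\}$ and settles the existence assertion.

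Third, I would match the combinatorics. The $t$-dependence must be identified with $\tilde f_\lambda(t)$ and $\tilde g_\lambda(t)$, which by Proposition~\ref{prop:f,g=cSS} come from $f_\lambda(x),g_\lambda(x)$ under $p_{(2m+1)}(x)\mapsto\frac{2m+1}{4}t_m$. The natural route is to feed the expansions of $\Phi^\pm(x,y)$ from Proposition~\ref{prop:Phi^+-} into the defining relations $a_\delta(y^2)(\Phi^-(x,y))^2=\sum_\lambda a_{\lambda+\Delta}(y^2)f_\lambda(x)$ and $a_\delta(y^2)\Phi^+(x,y)\Phi^-(x,y)=\sum_\lambda y_1\cdots y_n\,a_{\lambda+\Delta}(y^2)g_\lambda(x)$, so that the products $\Phi^-\Phi^-$ and $\Phi^+\Phi^-$ serve as the vertex-operator realizations of the shifted $\tau$-values $f(s\pm t)$. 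The multiplicity $2^n$ and the shifts $2\Delta_n$ and $\Delta_n+\delta_n$ should then emerge from the passage $\lambda\mapsto2\lambda+\Delta$ (respectively $2\mu+\delta$, $2\nu+\Delta$) together with the normalization $\frac{2m+1}{4}$ in the change of variables.

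The main obstacle I expect is precisely the bridge between the analytic side of step two and the symmetric-function side of step three: showing that the differential polynomials manufactured from the hierarchy reduction are literally the objects, indexed by $2\lambda+2\Delta_n$ and $2\lambda+\Delta_n+\delta_n$, that pair with $\tilde f_\lambda$ and $\tilde g_\lambda$, and that the weight $2^n$ comes out correctly. The delicate point is to control this identification \emph{uniformly in $n$} rather than degree by degree, and this is exactly where the special structure of $2$-reduced Schur polynomials --- their dependence on the odd power sums only --- must be exploited, so that the substitution $p_{(2m+1)}(x)\mapsto\frac{2m+1}{4}t_m$ is both well defined and compatible with the KdV time variables.
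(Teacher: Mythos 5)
There is no proof in the paper to compare yours against: Theorem \ref{thm:Sato_and_Mori} is quoted verbatim from Sato and Mori \cite{SM1980}, and the paper says explicitly that they ``stated the following as a theorem without any proof.'' The theorem appears only as motivation for Conjecture \ref{conj:cSS=Q}, which is what the paper actually works on. Your proposal must therefore be judged on its own terms, and on those terms it is a plausible high-level outline in the standard tau-function style, but not a proof; it has three concrete gaps.

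First, your step two silently strengthens the hypotheses. The theorem assumes only the single KdV equation coupling $s_0$ and $s_1$ (and mKdV coupling the same two variables for $v$), and imposes nothing on the dependence of $u$, $f$, $g$ on $s_2,s_3,\ldots$; your reduction of every $\partial_s^\alpha\log f$ to a differential polynomial in $u,u_{s_0},u_{s_0s_0},\ldots$ requires that $f$ be a tau-function of the full hierarchy, so that each higher flow $\partial_{s_j}u$ is the prescribed differential polynomial in $u$. You flag this yourself (``this is where the hierarchy is essential''), but flagging it does not close it: you would have to either justify that reading of \cite{SM1980} or derive the higher-flow constraints from the stated hypotheses, and neither is done. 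Second, the identity for $f(s+t)g(s-t)/(f(s)g(s))$ is essentially untouched: the combination $\log f(s+t)+\log g(s-t)-\log f(s)-\log g(s)$ is not even in $t$, so odd-order derivatives of $\log f$ and $\log g$ survive, and expressing the coefficients as polynomials in $u$ alone must go through the mKdV equation for $v=(\log(f/g))_{s_0}$ and the Miura-type link between $v$ and $u$; your argument never uses $v$, mKdV, or $g$ beyond naming them. Third, the identification in your step three --- that the $t$-coefficients are exactly $2^n\tilde f_\lambda(t)$ and $2^n\tilde g_\lambda(t)$ with the shifts $2\lambda+2\Delta_n$ and $2\lambda+\Delta_n+\delta_n$ --- is asserted (``should then emerge'') rather than derived, and this matching is the genuinely hard combinatorial content. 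Indeed, the closely related identification of $f_\lambda(x)$ and $g_\lambda(x)$ with the Schur $Q$-functions $2^{-n}Q_{2\lambda+2\Delta}(x,x)$ and $2^{-n}Q_{2\lambda+\Delta+\delta}(x,x)$ is precisely Conjecture \ref{conj:cSS=Q}, which remains open for general $n$ and is the whole point of the present paper; a sketch that lets the matching ``emerge'' has deferred, not resolved, the essential difficulty, including the uniformity in $n$ that you yourself identify as the main obstacle.
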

\section{A conjecture on $2$-reduced Schur functions and Schur's $Q$-functions}
Mizukawa, Nakajima, and Yamada~{\cite{N2019}} made the following conjecture that claims that $f_\lambda(x)$ and $g_\lambda(x)$ in the previous section coincide with Schur's $Q$-polynomials $Q_\lambda(x,x)$:
\begin{conj}[\cite{N2019}]\label{conj:cSS=Q}
The equalities
\begin{align*}
\sum_{\mu,\nu\in\mathscr P^{(n)}}c^\lambda_{\mu\nu}S_{2\mu+\Delta}(x)S_{2\nu+\Delta}(x)&=2^{-n}Q_{2\lambda+2\Delta}(x,x),\\
\sum_{\mu,\nu\in\mathscr P^{(n)}}c^\lambda_{\mu\nu}S_{2\mu+\delta}(x)S_{2\nu+\Delta}(x)&=2^{-n}Q_{2\lambda+\Delta+\delta}(x,x)
\end{align*}
hold for $n\geq1$ and $\lambda\in\mathscr P^{(n)}$.
\end{conj}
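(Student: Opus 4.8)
The plan is to lift both equalities of Conjecture~\ref{conj:cSS=Q} to identities of skew-symmetric polynomials in the auxiliary alphabet $y=(y_1,\dots,y_n)$ and then to read off the conjecture by comparing coefficients. By Proposition~\ref{prop:f,g=cSS} the two left-hand sides are exactly $f_\lambda(x)$ and $g_\lambda(x)$, which are determined (uniquely, as noted after Proposition~\ref{prop:Phi^+-}) by
\begin{align*}
a_\delta(y^2)(\Phi^-(x,y))^2&=\sum_{\lambda\in\mathscr P^{(n)}}a_{\lambda+\Delta}(y^2)\,f_\lambda(x),\\
a_\delta(y^2)\Phi^+(x,y)\Phi^-(x,y)&=\sum_{\lambda\in\mathscr P^{(n)}}y_1\cdots y_n\,a_{\lambda+\Delta}(y^2)\,g_\lambda(x).
\end{align*}
Since the families $\{a_{\lambda+\Delta}(y^2)\}$ and $\{y_1\cdots y_n\,a_{\lambda+\Delta}(y^2)\}$ are linearly independent, the conjecture is equivalent to the two generating-function identities obtained by replacing $f_\lambda(x)$ and $g_\lambda(x)$ above with $2^{-n}Q_{2\lambda+2\Delta}(x,x)$ and $2^{-n}Q_{2\lambda+\Delta+\delta}(x,x)$, respectively. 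I will describe the $f$-identity; the $g$-identity is handled in the same way, with one factor $\Phi^-$ replaced by $\Phi^+$ and the even shift $2\Delta$ replaced by the odd shift $\Delta+\delta$.

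Put $z=y^2$ and $\kappa=\lambda+\Delta$; then $\kappa$ runs over the strict partitions of length $n$ with $\kappa_n\ge1$, we have $2\lambda+2\Delta=2\kappa$, and $a_{\lambda+\Delta}(z)=\det(z_i^{\kappa_j})$. Thus the $Q$-side is $2^{-n}\sum_{\kappa}\det(z_i^{\kappa_j})\,Q_{2\kappa}(x,x)$. To put it in closed form I would first establish the two-variable generating function
\[\sum_{r,s\ge0}Q_{(r,s)}(x,x)\,u^rv^s=F(u)F(v)\,\frac{u-v}{u+v},\qquad F(t)\coloneqq\prod_{i=1}^n\Bigl(\frac{1+x_it}{1-x_it}\Bigr)^2,\]
which follows directly from the definitions of $Q_{(r,s)}$ and of $q_r(x,x)$, and then apply a Pfaffian minor-summation formula of Ishikawa--Wakayama type: for a skew array $(A_{rs})$ one has $\sum_{\kappa}\det(z_i^{\kappa_j})\,\mathrm{Pf}(A_{\kappa_i\kappa_j})=\mathrm{Pf}\bigl(\sum_{r<s}(z_i^rz_j^s-z_i^sz_j^r)A_{rs}\bigr)$, the sum on the left ranging over all strict $\kappa$ of length $n$. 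Taking $A_{rs}=Q_{(2r,2s)}(x,x)$ collapses the whole $Q$-side into a single $n\times n$ (or padded $(n{+}1)\times(n{+}1)$) Pfaffian whose entries are explicit rational expressions in $z$ and $F$. This Pfaffian is the ``new expression'' for Schur's $Q$-functions that organizes the proof; the boundary terms $\kappa_n=0$ produced by the summation involve $a_\kappa(z)$ that are linearly independent from the $a_{\lambda+\Delta}(z)$ we target, so they do not interfere with coefficient extraction.

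On the other side, Proposition~\ref{prop:Phi^+-} gives $\Phi^-(x,y)=\sum_\mu y_1\cdots y_n\,s_\mu(z)\,S_{2\mu+\Delta}(x)$, so that $a_\delta(z)(\Phi^-)^2=z_1\cdots z_n\sum_{\mu,\nu}a_\delta(z)\,s_\mu(z)s_\nu(z)\,S_{2\mu+\Delta}(x)S_{2\nu+\Delta}(x)$. Using $a_\delta(z)s_\mu(z)=a_{\mu+\delta}(z)$ and the Jacobi--Trudi form $S_\lambda(x)=\det(q_{\lambda_i-i+j}(x))$, this becomes a polynomial in the $z_i$ whose coefficients are products of the $q_r(x)$. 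The inverse Kostka matrix enters precisely here, through Proposition~\ref{prop:M101}: expanding the remaining Schur factor $s_\nu(z)=\sum_\rho K^{-1}_{\rho\nu'}e_\rho(z)$ into elementary symmetric polynomials and multiplying each $e_\rho(z)$ against $a_{\mu+\delta}(z)$ re-expands the $\Phi$-side in the alternating basis $\{a_\kappa(z)\}$ with combinatorially controlled coefficients. For $n=2$ the inverse Kostka entries are explicit enough that the two expansions can be matched term by term, which is how the unconditional $n=2$ proof proceeds.

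The main obstacle is to carry out this matching for general $n$. After the reductions above, the conjecture becomes a single combinatorial identity equating, coefficient of each $a_{\lambda+\Delta}(z)$, the Pfaffian expansion of the $Q$-side with the inverse-Kostka expansion of the $\Phi$-side. Neither side manifestly respects the even/odd shift structure, so the difficulty is to find an intermediate statement interpolating between the Pfaffian and $K^{-1}$ combinatorics. I would attack it by induction on $n$, matching a cofactor (row/column removal) expansion of the Pfaffian against the Pieri-type recursion satisfied by $K^{-1}$, or alternatively by a sign-reversing involution on the special rim-hook tableau model of the inverse Kostka matrix (E\u{g}ecio\u{g}lu--Remmel). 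Isolating and proving this identity unconditionally is the crux: it is exactly the residual statement on Schur's $Q$-functions and the inverse Kostka matrix to which the general case reduces, and settling it would complete the proof of Conjecture~\ref{conj:cSS=Q} for all $n\ge1$.
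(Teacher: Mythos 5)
Your outer reduction is sound and coincides with the paper's: by Proposition~\ref{prop:f,g=cSS} and the linear independence of the families $\{a_{\lambda+\Delta}(y^2)\}$ and $\{y_1\cdots y_n a_{\lambda+\Delta}(y^2)\}$ (equivalently, of the $s_\lambda(y^2)$, which is how the paper phrases it in Conjecture~\ref{conj:cSS=Q'}), the conjecture is equivalent to a pair of generating-function identities in the auxiliary alphabet. Past that point, however, your text is a plan rather than a proof, and both places where the mathematics actually happens are left as assertions. On the $Q$-side, your proposed route --- a two-variable generating function plus an Ishikawa--Wakayama minor summation --- is never executed: the collapsed Pfaffian is not computed, and dismissing the $\kappa_n=0$ boundary terms as ``not interfering with coefficient extraction'' presupposes that you already possess a closed evaluation of the full Pfaffian from which to extract coefficients, which is precisely the step you do not supply. (There is also a small slip: with this paper's convention $Q_{(0,0)}=0$, your generating function must read $F(u)F(v)\frac{u-v}{u+v}-1$; as written it fails at $(r,s)=(0,0)$.) The paper needs no Pfaffian machinery at all for its unconditional $n=2$ theorem: Lemma~\ref{lem:q(x,x)=2sumq(x)q(x)} writes each $q_m(x,x)$ as twice a convolution of one-alphabet $q$'s, and iterating it yields the closed expression of Proposition~\ref{prop:expression_of_Q} for $Q_{2\lambda+2\Delta}(x,x)$ as a signed sum of products $q_{2u_1}(x)q_{2u_2+1}(x)q_{2v_1}(x)q_{2v_2+1}(x)$ over the explicit sets $B(\lambda;\sigma_1,\sigma_2)$.

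The more serious gap is your sentence that for $n=2$ ``the inverse Kostka entries are explicit enough that the two expansions can be matched term by term'': that sentence \emph{is} the paper's main theorem, and nothing in your proposal substitutes for its proof. The matching is not a routine verification; it requires the E\u{g}ecio\u{g}lu--Remmel closed formula for $K^{-1}_{\lambda\mu}$ with $\lambda,\mu\in\widetilde{\mathscr P}^{(2)}$ (Proposition~\ref{prop:inv_Kostka_l=2}), the genuinely nontrivial binomial identity of Lemma~\ref{lem:binom_coeff}, and Proposition~\ref{prop:inverse_Kostka}, which converts the convolution sums $\sum_{\eta\cup\zeta=\xi}K^{-1}_{\eta(u_1-1,u_2)'}K^{-1}_{\zeta(v_1-1,v_2)'}$ into sums of single entries $K^{-1}_{\xi\lambda'}$ over sets $C(u,v;\sigma_1,\sigma_2)$ precisely dual to the $B(\lambda;\sigma_1,\sigma_2)$ on the $Q$-side; one must also check that the degenerate indices $u_1=u_2$ cancel in pairs, as is done in the proof of Theorem~\ref{thm:cSS=Q'',n=2}. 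Note too that your $\Phi$-side expansion is asymmetric (one Schur factor kept alternating, the other pushed through $K^{-1}$), which forces an extra Pieri-type re-expansion of the products $e_\rho(z)a_{\mu+\delta}(z)$; the paper instead pushes \emph{both} factors into the elementary basis, so that the product collapses via $e_\eta(y^2)e_\zeta(y^2)=e_{\eta\cup\zeta}(y^2)$ (Proposition~\ref{prop:LHSof(scSS)}) and extraction of the $e_\xi(y^2)$-coefficient is immediate. Your closing admission that the general-$n$ identity remains open is consistent with the paper's own position (Conjecture~\ref{conj:cSS=Q''}), but since the paper's contribution to this statement consists of the reduction together with an unconditional $n=2$ proof, your proposal as written establishes neither part.
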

The former formula of Conjecture \ref{conj:cSS=Q} corresponds to $f_\lambda(x)$, and the latter one corresponds to $g_\lambda(x)$.
Nakajima~{\cite{N2019}} proved the conjecture in the special cases that $n=1$, and that $n=2$ and $l(\lambda)\leq1$.
In this section we give the proof for these special cases following him.
First, we give a lemma on $q_r(x)$:
\begin{lem}[\cite{N2019}]\label{lem:q(x,x)=2sumq(x)q(x)}
For $u\geq0$, the following identities hold:
\begin{align}
\label{q_2u(x,x)=2sumq_2k(x)q_2k(x)}
q_{2(u+1)}(x,x)&=2\sum_{k=0}^{u+1}q_{2k}(x)q_{2(u+1-k)}(x),\\
\label{q_2u(x,x)=2sumq_2k+1(x)q_2k+1(x)}
q_{2(u+1)}(x,x)&=2\sum_{k=0}^uq_{2k+1}(x)q_{2(u-k)+1}(x),\\
\label{q_2u+1(x,x)=2sumq_2k+1(x)q_2k(x)}
q_{2u+1}(x,x)&=2\sum_{k=0}^uq_{2k+1}(x)q_{2(u-k)}(x).
\end{align}
\end{lem}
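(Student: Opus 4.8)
The plan is to argue entirely at the level of generating series. Write $F(t)\coloneqq\sum_{r\ge0}q_r(x)t^r=\prod_{i=1}^n\frac{1+x_it}{1-x_it}$. The definition of $q_r(x,x)$ gives $\sum_{r\ge0}q_r(x,x)t^r=F(t)^2$, so comparing the coefficient of $t^m$ yields the convolution
\[q_m(x,x)=\sum_{l=0}^mq_l(x)q_{m-l}(x)\qquad(m\ge0).\]
The key extra ingredient is the relation $F(t)F(-t)=1$, which is immediate from $F(-t)=\prod_{i=1}^n\frac{1-x_it}{1+x_it}=F(t)^{-1}$. Comparing the coefficient of $t^m$ on both sides of $F(t)F(-t)=1$ (and multiplying by $(-1)^m$) gives the alternating companion
\[\sum_{l=0}^m(-1)^lq_l(x)q_{m-l}(x)=\delta_{m0}\qquad(m\ge0).\]

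Next I would split each of these two convolutions according to the parity of the summation index $l$. For fixed $m\ge1$ let $P$ denote the sum of the terms with $l$ even and $R$ the sum of the terms with $l$ odd. The plain convolution reads $P+R=q_m(x,x)$, while the alternating one reads $P-R=0$ (the right-hand side vanishes since $m\ge1$). Hence $P=R=\tfrac12 q_m(x,x)$, i.e. $q_m(x,x)=2P=2R$.

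It then remains to identify $P$ and $R$ in each parity of $m$, which is pure bookkeeping. For $m=2(u+1)$, writing the even-$l$ terms as $l=2k$ gives $P=\sum_{k=0}^{u+1}q_{2k}(x)q_{2(u+1-k)}(x)$, and $q_{2(u+1)}(x,x)=2P$ is \eqref{q_2u(x,x)=2sumq_2k(x)q_2k(x)}; writing the odd-$l$ terms as $l=2k+1$ gives $R=\sum_{k=0}^{u}q_{2k+1}(x)q_{2(u-k)+1}(x)$, and $q_{2(u+1)}(x,x)=2R$ is \eqref{q_2u(x,x)=2sumq_2k+1(x)q_2k+1(x)}. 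For $m=2u+1$, the odd-$l$ terms $l=2k+1$ give $R=\sum_{k=0}^{u}q_{2k+1}(x)q_{2(u-k)}(x)$, and $q_{2u+1}(x,x)=2R$ is \eqref{q_2u+1(x,x)=2sumq_2k+1(x)q_2k(x)}. The only step that is not purely formal is spotting the identity $F(t)F(-t)=1$ and using it to force the even-index and odd-index halves of the convolution to coincide; once that is in hand, all three identities drop out together.
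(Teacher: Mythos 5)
Your proposal is correct and follows essentially the same route as the paper: both derive the convolution $q_m(x,x)=\sum_{l=0}^m q_l(x)q_{m-l}(x)$ from $F(t)^2$ and the alternating identity $\sum_{l=0}^m(-1)^lq_l(x)q_{m-l}(x)=\delta_{m0}$ from $F(t)F(-t)=1$, then combine them by parity (the paper adds or subtracts the two convolutions directly, which is the same as your $P=R=\tfrac12 q_m(x,x)$ bookkeeping). No gaps; the argument is complete as stated.
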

\begin{proof}
Since
\begin{align*}
&\sum_{r=0}^\infty q_r(x,x)t^r
=\prod_{i=1}^n\left(\dfrac{1+x_it}{1-x_it}\right)^2
=\left(\sum_{r_1=0}^\infty q_{r_1}(x)t^{r_1}\right)\left(\sum_{r_2=0}^\infty q_{r_2}(x)t^{r_2}\right)\\
&=\sum_{r=0}^\infty\sum_{s=0}^rq_s(x)q_{r-s}(x)t^r
\end{align*}
from the definition,
\begin{equation}\label{q(x,x)}
q_r(x,x)=\sum_{s=0}^rq_s(x)q_{r-s}(x)
\end{equation}
holds for $r\geq0$.
On the other hand,
\begin{align*}
&\sum_{r=0}^\infty\left(\sum_{s=0}^r(-1)^sq_s(x)q_{r-s}(x)\right)t^r
=\left(\sum_{r_1=0}^\infty q_{r_1}(x)t^{r_1}\right)\left(\sum_{r_2=0}^\infty q_{r_2}(x)(-t)^{r_2}\right)\\
&=\left(\prod_{i=1}^n\dfrac{1+x_it}{1-x_it}\right)\left(\prod_{i=1}^n\dfrac{1+x_i(-t)}{1-x_i(-t)}\right)
=1
\end{align*}
and
\[\sum_{s=0}^r(-1)^sq_s(x)q_{r-s}(x)=0\]
holds for $r\geq1$.
Therefore,
\begin{align*}
&q_{2(u+1)}(x,x)
=\sum_{s=0}^{2(u+1)}q_s(x)q_{2(u+1)-s}(x)\\
&=\sum_{s=0}^{2(u+1)}q_s(x)q_{2(u+1)-s}(x)+\sum_{s=0}^{2(u+1)}(-1)^sq_s(x)q_{2(u+1)-s}(x)\\
&=2\sum_{k=0}^{u+1}q_{2k}(x)q_{2(u+1-k)}(x)
\end{align*}
and (\ref{q_2u(x,x)=2sumq_2k(x)q_2k(x)}) holds.
Remaining identities (\ref{q_2u(x,x)=2sumq_2k+1(x)q_2k+1(x)}) and (\ref{q_2u+1(x,x)=2sumq_2k+1(x)q_2k(x)}) can be proved similarly.
\end{proof}
Now we prove the conjecture in the special cases following {\cite{N2019}}.
\begin{thm}[\cite{N2019}]
Conjecture \ref{conj:cSS=Q} holds for $n=1$;
\begin{align*}
\sum_{\mu,\nu\in\mathscr P^{(1)}}c^\lambda_{\mu\nu}S_{2\mu+\Delta}(x)S_{2\nu+\Delta}(x)&=2^{-n}Q_{2\lambda+2\Delta}(x,x),\\
\sum_{\mu,\nu\in\mathscr P^{(1)}}c^\lambda_{\mu\nu}S_{2\mu+\delta}(x)S_{2\nu+\Delta}(x)&=2^{-n}Q_{2\lambda+\Delta+\delta}(x,x)
\end{align*}
hold for $\lambda\in\mathscr P^{(1)}$, where $\Delta=(1), \delta=(0)\in\mathscr P^{(1)}$.
\end{thm}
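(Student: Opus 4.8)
The plan is to reduce both identities, in the case $n=1$, to the recurrences for $q_r(x,x)$ already recorded in Lemma \ref{lem:q(x,x)=2sumq(x)q(x)}. First I would unwind every object appearing in the statement under the specialization $n=1$. A partition in $\mathscr P^{(1)}$ is a single part $(\mu_1)$ with $\mu_1\geq0$, and the $2$-reduced Schur polynomial reduces to a $1\times1$ determinant, so $S_{(r)}(x)=q_r(x)$. Since $\Delta=(1)$ and $\delta=(0)$, this gives $S_{2\mu+\Delta}(x)=q_{2\mu_1+1}(x)$ and $S_{2\mu+\delta}(x)=q_{2\mu_1}(x)$. For the Littlewood--Richardson coefficients I would observe that in one variable $s_{(a)}(x)=x_1^a$, whence $s_{(a)}(x)s_{(b)}(x)=s_{(a+b)}(x)$ and therefore $c^{(\lambda_1)}_{(\mu_1)(\nu_1)}=\delta_{\lambda_1,\mu_1+\nu_1}$, a Kronecker delta.

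With these substitutions the two left-hand sides collapse to single convolution sums. The first (the $f_\lambda$-side) becomes $\sum_{k=0}^{\lambda_1}q_{2k+1}(x)q_{2(\lambda_1-k)+1}(x)$, and the second (the $g_\lambda$-side) becomes $\sum_{k=0}^{\lambda_1}q_{2k}(x)q_{2(\lambda_1-k)+1}(x)$. Next I would identify the right-hand sides. For a single part the Pfaffian definition degenerates: padding $(r)$ to $(r,0)$ and evaluating the $2\times2$ Pfaffian gives $Q_{(r,0)}(x,x)=q_r(x,x)q_0(x,x)=q_r(x,x)$, since the correction sum in the definition of $Q_{(r,s)}$ is empty when $s=0$ and $q_0=1$. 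Because $2\lambda+2\Delta=(2\lambda_1+2)$ and $2\lambda+\Delta+\delta=(2\lambda_1+1)$, the right-hand sides are $\tfrac12 q_{2(\lambda_1+1)}(x,x)$ and $\tfrac12 q_{2\lambda_1+1}(x,x)$ respectively.

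Finally I would invoke Lemma \ref{lem:q(x,x)=2sumq(x)q(x)} directly. Identity (\ref{q_2u(x,x)=2sumq_2k+1(x)q_2k+1(x)}) with $u=\lambda_1$ yields $\tfrac12 q_{2(\lambda_1+1)}(x,x)=\sum_{k=0}^{\lambda_1}q_{2k+1}(x)q_{2(\lambda_1-k)+1}(x)$, which matches the $f_\lambda$-side verbatim; identity (\ref{q_2u+1(x,x)=2sumq_2k+1(x)q_2k(x)}) with $u=\lambda_1$ matches the $g_\lambda$-side after the harmless reindexing $k\mapsto\lambda_1-k$ that interchanges the two factors in each summand. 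The argument is essentially mechanical once these reductions are carried out; the only step demanding care -- and the closest thing to an obstacle -- is the correct interpretation of $Q_{(r)}(x,x)$ for a length-one index, namely checking that the Pfaffian conventions of Section \ref{section:part_and_sym_poly} do collapse to exactly $q_r(x,x)$, so that the prefactor $2^{-n}=\tfrac12$ is precisely absorbed by the factor $2$ appearing in the Lemma.
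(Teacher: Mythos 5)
Your proposal is correct and follows essentially the same route as the paper: specialize to $n=1$ so that $S_{(r)}(x)=q_r(x)$ and $c^{(\lambda_1)}_{(\mu_1)(\nu_1)}=\delta_{\lambda_1,\mu_1+\nu_1}$ (the paper cites the Littlewood--Richardson rule where you argue directly in one variable), reduce both sides to the convolution identities, and conclude by Lemma \ref{lem:q(x,x)=2sumq(x)q(x)}. Your extra check that the Pfaffian convention gives $Q_{(r)}(x,x)=q_r(x,x)$ via padding to $(r,0)$ is a detail the paper leaves implicit, and it is handled correctly.
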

\begin{proof}
Let $\lambda=(\lambda_1)$.
The Littlewood--Richardson rule (see e.g. {\cite[Chapter 7, A1.3.3]{S1999}}) tells us that $c^\lambda_{\mu\nu}=1$ for $(\mu,\nu)=((k),(\lambda_1-k))$ with $0\leq k\leq\lambda_1$, and $c^\lambda_{\mu\nu}=0$ otherwise.
Therefore, the formulae of the theorem can be rewritten as follows:
\begin{align*}
\sum_{k=0}^{\lambda_1}q_{2k+1}(x)q_{2(\lambda_1-k)+1}(x)&=2^{-1}q_{2\lambda_1+2}(x,x),\\
\sum_{k=0}^{\lambda_1}q_{2k}(x)q_{2(\lambda_1-k)+1}(x)&=2^{-1}q_{2\lambda_1+1}(x,x).
\end{align*}
They hold from Lemma \ref{lem:q(x,x)=2sumq(x)q(x)} and it completes the proof.
\end{proof}
\begin{thm}[\cite{N2019}]
Conjecture \ref{conj:cSS=Q} holds for the case that $n=2$ and $l(\lambda)\leq1$;
\begin{align*}
\sum_{\mu,\nu\in\mathscr P^{(2)}}c^\lambda_{\mu\nu}S_{2\mu+\Delta}(x)S_{2\nu+\Delta}(x)&=2^{-n}Q_{2\lambda+2\Delta}(x,x),\\
\sum_{\mu,\nu\in\mathscr P^{(2)}}c^\lambda_{\mu\nu}S_{2\mu+\delta}(x)S_{2\nu+\Delta}(x)&=2^{-n}Q_{2\lambda+\Delta+\delta}(x,x)
\end{align*}
hold for $\lambda\in\mathscr P^{(1)}$, where $\Delta\coloneqq(2,1), \delta\coloneqq(1,0)\in\mathscr P^{(2)}$.
\end{thm}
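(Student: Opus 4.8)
The plan is to imitate the $n=1$ argument: collapse the double sums by the Littlewood--Richardson rule, and then reduce both sides to the $q_r(x)$ and compare by means of Lemma~\ref{lem:q(x,x)=2sumq(x)q(x)}. Since $\lambda=(\lambda_1)$ is a single row, $c^\lambda_{\mu\nu}\ne0$ forces $\mu\subseteq\lambda$ and $\nu\subseteq\lambda$, hence both $\mu$ and $\nu$ are single rows; writing $\mu=(a)$ and $\nu=(\lambda_1-a)$ for $0\le a\le\lambda_1$ the Pieri rule gives $c^\lambda_{\mu\nu}=1$, all other coefficients being $0$. With $\Delta=(2,1)$ and $\delta=(1,0)$ the two left-hand sides therefore become the single sums $\sum_{a=0}^{\lambda_1}S_{(2a+2,1)}(x)S_{(2(\lambda_1-a)+2,1)}(x)$ and $\sum_{a=0}^{\lambda_1}S_{(2a+1,0)}(x)S_{(2(\lambda_1-a)+2,1)}(x)$.

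Next I would expand each $2$-reduced Schur polynomial as the $2\times2$ determinant from its definition; using $q_0(x)=1$ and $q_{-1}(x)=0$ this gives $S_{(2c+2,1)}(x)=q_{2c+2}(x)q_1(x)-q_{2c+3}(x)$ and $S_{(2c+1,0)}(x)=q_{2c+1}(x)$, so that the left-hand sides are explicit sums of products of four factors $q_r(x)$. On the right-hand side the partitions $2\lambda+2\Delta=(2\lambda_1+4,2)$ and $2\lambda+\Delta+\delta=(2\lambda_1+3,1)$ have length $2$, so no Pfaffian is needed and $Q_{(r,s)}(x,x)$ is given directly by its defining formula; the factors $q_1(x,x)$ and $q_2(x,x)$ occurring there I would rewrite using the $u=0$ cases $q_1(x,x)=2q_1(x)$ and $q_2(x,x)=2q_1(x)^2$ of Lemma~\ref{lem:q(x,x)=2sumq(x)q(x)}.

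The decisive step is the comparison. For each single sum obtained above I would reindex it so as to recognise it as one half of a full convolution of Lemma~\ref{lem:q(x,x)=2sumq(x)q(x)} minus its two missing boundary terms; for instance $\sum_{a=0}^{\lambda_1}q_{2a+3}(x)q_{2(\lambda_1-a)+3}(x)=\tfrac12 q_{2\lambda_1+6}(x,x)-2q_1(x)q_{2\lambda_1+5}(x)$ by \eqref{q_2u(x,x)=2sumq_2k+1(x)q_2k+1(x)}, while the even--even and odd--even sums are treated by \eqref{q_2u(x,x)=2sumq_2k(x)q_2k(x)} and \eqref{q_2u+1(x,x)=2sumq_2k+1(x)q_2k(x)} respectively. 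The parity pattern forced by $\Delta=(2,1)$ and $\delta=(1,0)$ selects exactly one identity of the lemma for each sum. After substituting these expressions the explicit boundary terms, namely the multiples of $q_1(x)^2q_{2\lambda_1+4}(x)$, of $q_1(x)q_{2\lambda_1+5}(x)$, and of $q_1(x)q_{2\lambda_1+3}(x)$, cancel in pairs, and the surviving terms assemble exactly into $2^{-2}Q_{(2\lambda_1+4,2)}(x,x)$ and $2^{-2}Q_{(2\lambda_1+3,1)}(x,x)$. I expect this last bookkeeping to be the main obstacle: one must choose the correct parity identity for each sum and keep careful track of the boundary contributions so that they cancel, the remaining manipulations being routine.
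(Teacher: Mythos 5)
Your proposal is correct and follows essentially the same route as the paper: collapse the Littlewood--Richardson sums via the Pieri rule, expand the $2\times2$ determinants $S_{(2c+2,1)}(x)=q_{2c+2}(x)q_1(x)-q_{2c+3}(x)$ and $S_{(2c+1,0)}(x)=q_{2c+1}(x)$, evaluate each resulting convolution by the appropriate parity identity of Lemma~\ref{lem:q(x,x)=2sumq(x)q(x)} together with $q_1(x,x)=2q_1(x)$ and $q_2(x,x)=2q_1(x)^2$, and observe the pairwise cancellation of the boundary terms. Your bookkeeping of the boundary contributions (the multiples of $q_1(x)^2q_{2\lambda_1+4}(x)$, $q_1(x)q_{2\lambda_1+5}(x)$, and $q_1(x)q_{2\lambda_1+3}(x)$) matches the paper's computation exactly.
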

\begin{proof}
Let $\lambda=(\lambda_1)$.
The formulae of the theorem are rewritten as follows using the Littlewood--Richardson rule:
\begin{align*}
\sum_{k=0}^{\lambda_1}S_{(2k+2,1)}(x)S_{(2(\lambda_1-k)+2,1)}(x)&=2^{-2}Q_{(2\lambda_1+4,2)}(x,x),\\
\sum_{k=0}^{\lambda_1}S_{(2k+1)}(x)S_{(2(\lambda_1-k)+2,1)}(x)&=2^{-2}Q_{(2\lambda_1+3,1)}(x,x).
\end{align*}
They can be rewritten using $q_r(x)$ and $q_r(x,x)$:
\begin{align}\label{cSS=Q,even,n=2,l(lambda)=1}
\begin{split}
&\sum_{k=0}^{\lambda_1}
\det\begin{pmatrix}
q_{2k+2}(x)&q_{2k+3}(x)\\
1&q_1(x)
\end{pmatrix}
\det\begin{pmatrix}
q_{2(\lambda_1-k)+2}(x)&q_{2(\lambda_1-k)+3}(x)\\
1&q_1(x)
\end{pmatrix}\\
&=2^{-2}(q_{2\lambda_1+4}(x,x)q_2(x,x)-2q_{2\lambda_1+5}(x,x)q_1(x,x)+2q_{2\lambda_1+6}(x,x)),
\end{split}
\end{align}
\begin{align}\label{cSS=Q,odd,n=2,l(lambda)=1}
\begin{split}
&\sum_{k=0}^{\lambda_1}q_{2k+1}(x)
\det\begin{pmatrix}
q_{2(\lambda_1-k)+2}(x)&q_{2(\lambda_1-k)+3}(x)\\
1&q_1(x)
\end{pmatrix}\\
&=2^{-2}(q_{2\lambda_1+3}(x,x)q_1(x,x)-2q_{2\lambda_1+4}(x,x)).
\end{split}
\end{align}
The left-hand side of (\ref{cSS=Q,even,n=2,l(lambda)=1}) is
\begin{align*}
&\sum_{k=0}^{\lambda_1}
\det\begin{pmatrix}
q_{2k+2}(x)&q_{2k+3}(x)\\
1&q_1(x)
\end{pmatrix}
\det\begin{pmatrix}
q_{2(\lambda_1-k)+2}(x)&q_{2(\lambda_1-k)+3}(x)\\
1&q_1(x)
\end{pmatrix}\\
&=\sum_{k=0}^{\lambda_1}(q_{2k+2}(x)q_1(x)-q_{2k+3}(x))(q_{2(\lambda_1-k)+2}(x)q_1(x)-q_{2(\lambda_1-k)+3}(x))\\
&=\sum_{k=0}^{\lambda_1}(q_{2k+2}(x)q_{2(\lambda_1-k)+2}(x)q_1(x)^2-q_{2k+2}(x)q_{2(\lambda_1-k)+3}(x)q_1(x)\\
&\qquad\qquad-q_{2k+3}(x)q_{2(\lambda_1-k)+2}(x)q_1(x)+q_{2k+3}(x)q_{2(\lambda_1-k)+3}(x))\\
&=\sum_{k=0}^{\lambda_1}(q_{2k+2}(x)q_{2(\lambda_1-k)+2}(x)q_1(x)^2-2q_{2k+2}(x)q_{2(\lambda_1-k)+3}(x)q_1(x)\\
&\qquad\qquad+q_{2k+3}(x)q_{2(\lambda_1-k)+3}(x))
\end{align*}
and each term can be calculated as follows using Lemma \ref{lem:q(x,x)=2sumq(x)q(x)}:
\begin{align*}
&\sum_{k=0}^{\lambda_1}q_{2k+2}(x)q_{2(\lambda_1-k)+2}(x)q_1(x)^2\\
&=(2^{-1}q_{2\lambda_1+4}(x,x)-2q_{2\lambda_1+4}(x)q_0(x))q_1(x)^2\\
&=2^{-1}q_{2\lambda_1+4}(x,x)q_1(x)^2-2q_{2\lambda_1+4}(x)q_0(x)q_1(x)^2\\
&=2^{-2}q_{2\lambda_1+4}(x,x)q_2(x,x)-q_{2\lambda_1+4}(x)q_2(x,x),
\end{align*}
\begin{align*}
&-2\sum_{k=0}^{\lambda_1}q_{2k+2}(x)q_{2(\lambda_1-k)+3}(x)q_1(x)\\
&=-(q_{2\lambda_1+5}(x,x)-2q_0(x)q_{2\lambda_1+5}(x)-2q_{2\lambda_1+4}(x)q_1(x))q_1(x)\\
&=-q_{2\lambda_1+5}(x,x)q_1(x)+2q_0(x)q_{2\lambda_1+5}(x)q_1(x)+2q_{2\lambda_1+4}(x)q_1(x)^2\\
&=-2^{-1}q_{2\lambda_1+5}(x,x)q_1(x,x)+2q_{2\lambda_1+5}(x)q_1(x)+q_{2\lambda_1+4}(x)q_2(x,x),
\end{align*}
\[\sum_{k=0}^{\lambda_1}q_{2k+3}(x)q_{2(\lambda_1-k)+3}(x)=2^{-1}q_{2\lambda_1+6}(x,x)-2q_1(x)q_{2\lambda_1+5}(x).\]
Therefore, the left-hand side of (\ref{cSS=Q,even,n=2,l(lambda)=1}) is equals to
\[2^{-2}(q_{2\lambda_1+4}(x,x)q_2(x,x)-2q_{2\lambda_1+5}(x,x)q_1(x,x)+2q_{2\lambda_1+6}(x,x))\]
and the identity (\ref{cSS=Q,even,n=2,l(lambda)=1}) is proved.
The identity (\ref{cSS=Q,odd,n=2,l(lambda)=1}) can also be proved in a similar way, and the theorem holds.
\end{proof}
\section{Consideration of the conjecture in general case}
In this section we consider Conjecture \ref{conj:cSS=Q} in general $n$.
Let $y=(y_1,y_2,\ldots,y_n)$ be a sequence of $n$ variables which is independent of $x=(x_1,x_2,\ldots,x_n)$, and write $y^2=(y_1^2,y_2^2,\ldots,y_n^2)$.
Since $\{s_\lambda(y^2)=s_\lambda(y_1^2,y_2^2,\ldots,y_n^2)\;|\;\lambda\in\mathscr P^{(n)}\}$ is linearly independent, the following conjecture is equivalent to Conjecture \ref{conj:cSS=Q}:
\begin{conj}\label{conj:cSS=Q'}
For $n\geq1$ the following identities hold:
\begin{align}\label{scSS=sQ,even}
\begin{split}
\sum_{\lambda\in\mathscr P^{(n)}}s_\lambda(y^2)\sum_{\mu,\nu\in\mathscr P^{(n)}}c^\lambda_{\mu\nu}S_{2\mu+\Delta}(x)S_{2\nu+\Delta}(x)
&=\sum_{\lambda\in\mathscr P^{(n)}}s_\lambda(y^2)(2^{-n}Q_{2\lambda+2\Delta}(x,x)),
\end{split}
\end{align}
\begin{align}\label{scSS=sQ,odd}
\begin{split}
\sum_{\lambda\in\mathscr P^{(n)}}s_\lambda(y^2)\sum_{\mu,\nu\in\mathscr P^{(n)}}c^\lambda_{\mu\nu}S_{2\mu+\delta}(x)S_{2\nu+\Delta}(x)
&=\sum_{\lambda\in\mathscr P^{(n)}}s_\lambda(y^2)(2^{-n}Q_{2\lambda+\Delta+\delta}(x,x)).
\end{split}
\end{align}
\end{conj}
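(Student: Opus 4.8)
The plan is to avoid comparing the two sides of (\ref{scSS=sQ,even}) and (\ref{scSS=sQ,odd}) coefficient-by-coefficient in the basis $\{s_\lambda(y^2)\}$, since that merely restates Conjecture \ref{conj:cSS=Q}; instead I would exploit the fact that, once summed against $s_\lambda(y^2)$, the left-hand sides collapse to closed products of the functions $\Phi^\pm$. Write $\pi(y)\coloneqq y_1y_2\cdots y_n$. First I would interchange the order of summation using the defining property of the Littlewood--Richardson coefficients, namely $\sum_{\lambda\in\mathscr P^{(n)}}c^\lambda_{\mu\nu}s_\lambda(y^2)=s_\mu(y^2)s_\nu(y^2)$. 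The double sums then factor, and Proposition \ref{prop:Phi^+-} identifies the factors: the left-hand side of (\ref{scSS=sQ,even}) becomes $\bigl(\Phi^-(x,y)/\pi(y)\bigr)^2$, and the left-hand side of (\ref{scSS=sQ,odd}) becomes $\Phi^+(x,y)\,\Phi^-(x,y)/\pi(y)$.

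Consequently Conjecture \ref{conj:cSS=Q'} is equivalent to the two generating-function identities
\begin{align*}
\Bigl(\frac{\Phi^-(x,y)}{\pi(y)}\Bigr)^2 &= 2^{-n}\sum_{\lambda\in\mathscr P^{(n)}}s_\lambda(y^2)\,Q_{2\lambda+2\Delta}(x,x),\\
\frac{\Phi^+(x,y)\,\Phi^-(x,y)}{\pi(y)} &= 2^{-n}\sum_{\lambda\in\mathscr P^{(n)}}s_\lambda(y^2)\,Q_{2\lambda+\Delta+\delta}(x,x),
\end{align*}
in which every unknown quantity sits on the right-hand side: the functions $\Phi^\pm$ already have the closed description of Proposition \ref{prop:Phi^+-}, coming ultimately from the Cauchy kernel $\prod_{i,j}(1+x_iy_j)/(1-x_iy_j)$ of Proposition \ref{prop:M225(4.7)}. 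The task is therefore to produce a matching closed form for the two $Q$-function sums.

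To treat the right-hand sides I would introduce a new expression for $Q_\lambda(x,x)$. Expanding the Pfaffian defining $Q_{2\lambda+2\Delta}(x,x)$ and then applying Lemma \ref{lem:q(x,x)=2sumq(x)q(x)} to replace each doubled symbol $q_r(x,x)$ by its bilinear form $2\sum q_a(x)q_b(x)$ turns every entry, and hence the whole Pfaffian, into a bilinear expression in the single-variable symbols $q_r(x)$; the factors of $2$ supplied by Lemma \ref{lem:q(x,x)=2sumq(x)q(x)} are exactly what generate the overall $2^{-n}$. The goal is then to reassemble this bilinear expression as the square (respectively the product) of the determinants $\det(q_{2\mu_i+\Delta_i-i+j}(x))$ and $\det(q_{2\mu_i+\delta_i-i+j}(x))$ that make up $(\Phi^-/\pi(y))^2$ and $\Phi^+\,\Phi^-/\pi(y)$. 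For $n=2$ each $Q_{2\lambda+2\Delta}(x,x)$ is a single two-row $Q_{(r,s)}(x,x)$, and this regrouping can be carried out explicitly after the Littlewood--Richardson weighted sum over $\mu,\nu$ is evaluated; this is the content of the $n=2$ case.

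The main obstacle is the regrouping for general $n$. The bilinear $q(x)$-expansion coming from the $Q$-Pfaffian and the expansion of the squared/paired determinants are both indexed by pairs of compositions, and their equality is not term-by-term. My plan is to push both sides into a common basis by means of the inverse Kostka matrix: through the $q$-analogue of Proposition \ref{prop:M101} the $2$-reduced Schur determinants $S_{2\mu+\Delta}(x)$ expand via the entries $K^{-1}$, and the same entries can be made to organize the Pfaffian expansion of the $Q$-side. Equating coefficients then reduces both identities to a single combinatorial statement about the numbers $K^{-1}$ and the structure of Schur's $Q$-functions. I expect that inverse-Kostka identity to be the genuinely hard point: I would prove it outright for $n=2$, thereby settling the conjecture in that case, and otherwise isolate it as the remaining self-contained conjecture on the inverse Kostka matrix and $Q$-functions, exactly as announced in the introduction.
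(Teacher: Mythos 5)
Your proposal follows essentially the same route as the paper: the Littlewood--Richardson collapse of the left-hand sides to $\bigl(\Phi^-(x,y)/\pi(y)\bigr)^2$ and $\Phi^+(x,y)\Phi^-(x,y)/\pi(y)$, the rewriting of both sides in the common basis $\{e_\xi(y^2)\}$ via inverse Kostka entries (the paper's Propositions \ref{prop:LHSof(scSS)} and \ref{prop:RHSof(scSS)}, isolating the residual combinatorial statement as Conjecture \ref{conj:cSS=Q''}), the bilinear $q_r(x)$-expansion of the $Q$-side obtained from Lemma \ref{lem:q(x,x)=2sumq(x)q(x)} (Proposition \ref{prop:expression_of_Q}), and an outright proof of the inverse-Kostka identity for $n=2$ (Proposition \ref{prop:inverse_Kostka}). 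The one cosmetic slip is that the $K^{-1}$ entries enter by expanding the $y$-side Schur polynomials $s_{\tau_u^{-1}u-\delta}(y^2)$ in elementary symmetric polynomials via Proposition \ref{prop:M101}, not through a ``$q$-analogue'' expansion of $S_{2\mu+\Delta}(x)$, which stays in its determinantal $q$-form throughout.
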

The goal of the section is to write the both sides of the above identities as linear combinations of elementary symmetric polynomials $\{e_\xi(y^2)\mid\xi\in\mathscr P^{(n)}\}$, without Littlewood--Richardson coefficients.
Let $A_n$ be the set of sequences of $n$ non-negative integers $u=(u_1,u_2,\ldots,u_n)$ which consist of distinct terms $u_1,u_2,\ldots,u_n$.
Given $u\in A_n$, there is a unique permutation $\tau_u\in\mathfrak S_n$ such that $u_{\tau_u(n)}<\cdots<u_{\tau_u(2)}<u_{\tau_u(1)}$ hold.
By using these notations we can state the propositions which we prove in this section:
\begin{prop}\label{prop:LHSof(scSS)}
The left-hand sides of (\ref{scSS=sQ,even}) and (\ref{scSS=sQ,odd}) are written as follows:
\begin{align*}
&\sum_{\lambda\in\mathscr P^{(n)}}s_\lambda(y^2)\sum_{\mu,\nu\in\mathscr P^{(n)}}c^\lambda_{\mu\nu}S_{2\mu+\Delta}(x)S_{2\nu+\Delta}(x)\\
&=\sum_{\xi\in\widetilde{\mathscr P}^{(n)}}e_\xi(y^2)\sum_{u,v\in A_n}\mathrm{sgn}(\tau_u)\mathrm{sgn}(\tau_v)\left(\prod_{j=1}^nq_{2u_j+1-(n-j)}(x)q_{2v_j+1-(n-j)}(x)\right)\\
&\qquad\cdot\sum_{\substack{\eta,\zeta\in\widetilde{\mathscr P}^{(n)}\\\eta\cup\zeta=\xi}}K^{-1}_{\eta(\tau_u^{-1}u-\delta)'}K^{-1}_{\zeta(\tau_v^{-1}v-\delta)'},
\end{align*}
\begin{align*}
&\sum_{\lambda\in\mathscr P^{(n)}}s_\lambda(y^2)\sum_{\mu,\nu\in\mathscr P^{(n)}}c^\lambda_{\mu\nu}S_{2\mu+\delta}(x)S_{2\nu+\Delta}(x)\\
&=\sum_{\xi\in\widetilde{\mathscr P}^{(n)}}e_\xi(y^2)\sum_{u,v\in A_n}\mathrm{sgn}(\tau_u)\mathrm{sgn}(\tau_v)\left(\prod_{j=1}^nq_{2u_j-(n-j)}(x)q_{2v_j+1-(n-j)}(x)\right)\\
&\qquad\cdot\sum_{\substack{\eta,\zeta\in\widetilde{\mathscr P}^{(n)}\\\eta\cup\zeta=\xi}}K^{-1}_{\eta(\tau_u^{-1}u-\delta)'}K^{-1}_{\zeta(\tau_v^{-1}v-\delta)'}.
\end{align*}
\end{prop}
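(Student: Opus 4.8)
The plan is to prove both identities of Proposition~\ref{prop:LHSof(scSS)} by a single three-step mechanism: collapse the Littlewood--Richardson sum so that each left-hand side becomes a product of two ``single'' sums, expand each single sum into products of the $q_r(x)$ through the generating function underlying $\Phi^\pm$, and finally pass from Schur polynomials in $y^2$ to elementary symmetric polynomials via the inverse Kostka matrix (Proposition~\ref{prop:M101}).

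First I would collapse the inner sum. Interchanging the order of summation and using $\sum_{\lambda\in\mathscr P^{(n)}}c^\lambda_{\mu\nu}s_\lambda(y^2)=s_\mu(y^2)s_\nu(y^2)$, the left-hand side of \eqref{scSS=sQ,even} becomes $\bigl(\sum_{\mu}s_\mu(y^2)S_{2\mu+\Delta}(x)\bigr)^2$ and that of \eqref{scSS=sQ,odd} becomes $\bigl(\sum_{\mu}s_\mu(y^2)S_{2\mu+\delta}(x)\bigr)\bigl(\sum_{\nu}s_\nu(y^2)S_{2\nu+\Delta}(x)\bigr)$. By Proposition~\ref{prop:Phi^+-} these single sums equal $\Phi^-(x,y)/(y_1\cdots y_n)$ and $\Phi^+(x,y)$ respectively, so it suffices to expand $\Phi^+(x,y)$ and $\Phi^-(x,y)/(y_1\cdots y_n)$ individually.

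The heart of the argument is the explicit expansion of $\Phi^\pm$ from the definition. Writing $\prod_{i,j}\frac{1+x_iy_j}{1-x_iy_j}=\prod_j\sum_{r\ge0}q_r(x)y_j^r$ and absorbing $a_\delta(y)=\det(y_j^{n-i})$ as a column scaling gives $a_\delta(y)\prod_{i,j}\frac{1+x_iy_j}{1-x_iy_j}=\det\bigl(\sum_{m\ge0}q_{m-(n-i)}(x)y_j^m\bigr)_{1\le i,j\le n}$, whose Leibniz expansion is $\sum_{\sigma\in\mathfrak S_n}\mathrm{sgn}(\sigma)\sum_{m\in(\mathbb Z_{\ge0})^n}\prod_i q_{m_i-(n-i)}(x)y_{\sigma(i)}^{m_i}$. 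Applying $\mathrm{TO}_y$ (resp.\ $\mathrm{TE}_y$) retains only the terms with every $m_i$ odd (resp.\ even); substituting $m_i=2u_i+1$ (resp.\ $m_i=2u_i$), dividing by $a_\delta(y^2)$, and recognizing $\sum_\sigma\mathrm{sgn}(\sigma)\prod_i(y_{\sigma(i)}^2)^{u_i}=a_u(y^2)$, I would obtain
\[\sum_{\mu\in\mathscr P^{(n)}}s_\mu(y^2)S_{2\mu+\Delta}(x)=\sum_{u\in A_n}\mathrm{sgn}(\tau_u)\Bigl(\prod_{j=1}^n q_{2u_j+1-(n-j)}(x)\Bigr)s_{\tau_u^{-1}u-\delta}(y^2),\]
together with the companion identity in which $q_{2u_j+1-(n-j)}$ is replaced by $q_{2u_j-(n-j)}$ and $S_{2\mu+\Delta}$ by $S_{2\mu+\delta}$. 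Here the unrestricted sum over $(\mathbb Z_{\ge0})^n$ collapses to $A_n$ because $a_u(y^2)$ vanishes when $u$ has a repeated entry, while for $u\in A_n$ one has $a_u(y^2)=\mathrm{sgn}(\tau_u)a_{\tau_u^{-1}u}(y^2)=\mathrm{sgn}(\tau_u)s_{\tau_u^{-1}u-\delta}(y^2)a_\delta(y^2)$, the strictly decreasing rearrangement $\tau_u^{-1}u$ of $u$ satisfying $\tau_u^{-1}u-\delta\in\mathscr P^{(n)}$.

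To finish, I would expand each $s_{\tau_u^{-1}u-\delta}(y^2)$ by Proposition~\ref{prop:M101} as $\sum_{\eta\in\widetilde{\mathscr P}^{(n)}}K^{-1}_{\eta(\tau_u^{-1}u-\delta)'}e_\eta(y^2)$. Multiplying the two single-factor expansions, using $e_\eta(y^2)e_\zeta(y^2)=e_{\eta\cup\zeta}(y^2)$, and regrouping the terms by $\xi=\eta\cup\zeta\in\widetilde{\mathscr P}^{(n)}$ produces exactly the two stated formulas. The hard part is not any single identity but the bookkeeping in the middle step: performing the column-scaled determinant manipulation, extracting the totally even/odd part with the correct reparametrization of $m_i$, and, most delicately, reducing the free index sum to $A_n$ while keeping the signs $\mathrm{sgn}(\tau_u)$ and the identification $\tau_u^{-1}u-\delta$ consistent with the paper's convention for the $\mathfrak S_n$-action. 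As a safeguard against sign errors I would verify that grouping the $u$-sum by the value of $\tau_u^{-1}u-\delta$ recovers Proposition~\ref{prop:Phi^+-}.
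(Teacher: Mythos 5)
Your proposal is correct and follows essentially the same route as the paper: collapsing the Littlewood--Richardson sum into a product of two single sums, extracting the totally odd/even parts of $a_\delta(y)\prod_{i,j}\frac{1+x_iy_j}{1-x_iy_j}$ (your column-scaled determinant with its Leibniz expansion is just a repackaging of the paper's direct expansion of $a_\delta(y)$ over $\mathfrak S_n$), reducing the index sum to $A_n$ via the vanishing of $a_u(y^2)$ on repeated entries with the sign $\mathrm{sgn}(\tau_u)$, and finishing with Proposition~\ref{prop:M101} and $e_\eta(y^2)e_\zeta(y^2)=e_{\eta\cup\zeta}(y^2)$. The invocation of Proposition~\ref{prop:Phi^+-} is a harmless detour, since you still carry out the same expansion the paper performs directly.
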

\begin{prop}\label{prop:RHSof(scSS)}
The right-hand sides of (\ref{scSS=sQ,even}) and (\ref{scSS=sQ,odd}) are written as follows:
\begin{align*}
\sum_{\lambda\in\mathscr P^{(n)}}s_\lambda(y^2)(2^{-n}Q_{2\lambda+2\Delta}(x,x))
&=\sum_{\xi\in\widetilde{\mathscr P}^{(n)}}e_\xi(y^2)\sum_{\lambda\in\mathscr P^{(n)}}2^{-n}K^{-1}_{\xi\lambda'}Q_{2\lambda+2\Delta}(x,x),\\
\sum_{\lambda\in\mathscr P^{(n)}}s_\lambda(y^2)(2^{-n}Q_{2\lambda+\Delta+\delta}(x,x))
&=\sum_{\xi\in\widetilde{\mathscr P}^{(n)}}e_\xi(y^2)\sum_{\lambda\in\mathscr P^{(n)}}2^{-n}K^{-1}_{\xi\lambda'}Q_{2\lambda+\Delta+\delta}(x,x).
\end{align*}
\end{prop}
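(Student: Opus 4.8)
The plan is to obtain both identities by a single direct application of Proposition \ref{prop:M101}, the expansion of a Schur polynomial into elementary symmetric polynomials via the inverse Kostka matrix. Since the only place the Littlewood--Richardson coefficients entered the right-hand sides of (\ref{scSS=sQ,even}) and (\ref{scSS=sQ,odd}) was implicitly through the factor $s_\lambda(y^2)$, it suffices to re-expand that factor in the basis $\{e_\xi(y^2)\mid\xi\in\widetilde{\mathscr P}^{(n)}\}$ and then swap the order of the two finite sums.

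Concretely, I would first specialize the variables in Proposition \ref{prop:M101} by replacing $x$ with $y^2=(y_1^2,\ldots,y_n^2)$, which yields
\[s_\lambda(y^2)=\sum_{\xi\in\widetilde{\mathscr P}^{(n)}}K^{-1}_{\xi\lambda'}e_\xi(y^2)\]
for every $\lambda\in\mathscr P^{(n)}$. Substituting this expression into the right-hand side of (\ref{scSS=sQ,even}) gives
\[\sum_{\lambda\in\mathscr P^{(n)}}s_\lambda(y^2)\bigl(2^{-n}Q_{2\lambda+2\Delta}(x,x)\bigr)
=\sum_{\lambda\in\mathscr P^{(n)}}\sum_{\xi\in\widetilde{\mathscr P}^{(n)}}K^{-1}_{\xi\lambda'}e_\xi(y^2)\,2^{-n}Q_{2\lambda+2\Delta}(x,x).\]
Because both index sets $\mathscr P^{(n)}$ and $\widetilde{\mathscr P}^{(n)}$ are finite here (the degrees involved are bounded once $n$ and the total degree are fixed), the two summations may be interchanged without any convergence concern, and collecting the coefficient of each $e_\xi(y^2)$ produces exactly the claimed formula. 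The second identity follows by the identical substitution, simply replacing $Q_{2\lambda+2\Delta}(x,x)$ by $Q_{2\lambda+\Delta+\delta}(x,x)$ throughout.

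There is no genuine obstacle in this proposition: it is a bookkeeping step that isolates the coefficient of each $e_\xi(y^2)$ on the $Q$-function side so that, together with the companion Proposition \ref{prop:LHSof(scSS)}, Conjecture \ref{conj:cSS=Q'} reduces to comparing the $e_\xi(y^2)$-coefficients on the two sides. The only point requiring care is to keep the conjugation in the subscript $\lambda'$ and the two distinct index alphabets $\mathscr P^{(n)}$ versus $\widetilde{\mathscr P}^{(n)}$ consistent with the conventions of Proposition \ref{prop:M101}; the real analytic difficulty is deferred to matching this expression against the left-hand side, where the inverse Kostka entries appear doubled as $K^{-1}_{\eta(\tau_u^{-1}u-\delta)'}K^{-1}_{\zeta(\tau_v^{-1}v-\delta)'}$.
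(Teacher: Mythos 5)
Your proposal is correct and matches the paper's own proof essentially verbatim: both apply Proposition \ref{prop:M101} (with $y^2$ substituted for the variables) to expand $s_\lambda(y^2)$ in the basis $\{e_\xi(y^2)\}$ and then interchange the two summations, the interchange being harmless since in each fixed degree only finitely many terms contribute. Your closing remarks on bookkeeping (the conjugate $\lambda'$ and the distinction between $\mathscr P^{(n)}$ and $\widetilde{\mathscr P}^{(n)}$) are accurate but add nothing beyond what the paper does.
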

By rewriting (\ref{scSS=sQ,even}) and (\ref{scSS=sQ,odd}) using Propositions \ref{prop:LHSof(scSS)} and \ref{prop:RHSof(scSS)}, we obtain that the following identities are equivalent to Conjecture \ref{conj:cSS=Q'}:
\begin{conj}\label{conj:cSS=Q''}
For $n\geq1$ and $\xi\in\widetilde{\mathscr P}^{(n)}$ the following identities hold:
\begin{align*}
&\sum_{u,v\in A_n}\mathrm{sgn}(\tau_u)\mathrm{sgn}(\tau_v)\left(\prod_{j=1}^nq_{2u_j+1-(n-j)}(x)q_{2v_j+1-(n-j)}(x)\right)
\sum_{\substack{\eta,\zeta\in\widetilde{\mathscr P}^{(n)}\\\eta\cup\zeta=\xi}}K^{-1}_{\eta(\tau_u^{-1}u-\delta)'}K^{-1}_{\zeta(\tau_v^{-1}v-\delta)'}\\
&=\sum_{\lambda\in\mathscr P^{(n)}}2^{-n}K^{-1}_{\xi\lambda'}Q_{2\lambda+2\Delta}(x,x),\\
&\sum_{u,v\in A_n}\mathrm{sgn}(\tau_u)\mathrm{sgn}(\tau_v)\left(\prod_{j=1}^nq_{2u_j-(n-j)}(x)q_{2v_j+1-(n-j)}(x)\right)
\sum_{\substack{\eta,\zeta\in\widetilde{\mathscr P}^{(n)}\\\eta\cup\zeta=\xi}}K^{-1}_{\eta(\tau_u^{-1}u-\delta)'}K^{-1}_{\zeta(\tau_v^{-1}v-\delta)'}\\
&=\sum_{\lambda\in\mathscr P^{(n)}}2^{-n}K^{-1}_{\xi\lambda'}Q_{2\lambda+\Delta+\delta}(x,x).
\end{align*}
\end{conj}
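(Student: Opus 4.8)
The plan is to read Conjecture~\ref{conj:cSS=Q''} as the coefficient-by-coefficient form of Conjecture~\ref{conj:cSS=Q'}, obtained by feeding the two expansions of Propositions~\ref{prop:LHSof(scSS)} and~\ref{prop:RHSof(scSS)} into the identities \eqref{scSS=sQ,even} and \eqref{scSS=sQ,odd}. Granting those two propositions, the left-hand side of each identity in Conjecture~\ref{conj:cSS=Q'} equals $\sum_{\xi\in\widetilde{\mathscr P}^{(n)}}e_\xi(y^2)\,L_\xi(x)$, where $L_\xi(x)$ is the inner double sum over $u,v\in A_n$ displayed in Proposition~\ref{prop:LHSof(scSS)}, while the right-hand side equals $\sum_{\xi\in\widetilde{\mathscr P}^{(n)}}e_\xi(y^2)\,R_\xi(x)$ with $R_\xi(x)=\sum_{\lambda\in\mathscr P^{(n)}}2^{-n}K^{-1}_{\xi\lambda'}Q_{2\lambda+2\Delta}(x,x)$ in the even case (and $Q_{2\lambda+\Delta+\delta}(x,x)$ in the odd case). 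Thus each identity of Conjecture~\ref{conj:cSS=Q'} becomes $\sum_\xi e_\xi(y^2)L_\xi(x)=\sum_\xi e_\xi(y^2)R_\xi(x)$.

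The decisive point is that $\{e_\xi(y^2)\mid\xi\in\widetilde{\mathscr P}^{(n)}\}$ is linearly independent: the $e_\xi$ form a $\mathbb Q$-basis of $\Lambda_n$, and the substitution $z_i\mapsto y_i^2$ sends distinct monomials to distinct monomials, hence preserves linear independence. Therefore the displayed equality in the $y$-variables holds if and only if $L_\xi(x)=R_\xi(x)$ for each $\xi\in\widetilde{\mathscr P}^{(n)}$ separately, and these are exactly the two identities of Conjecture~\ref{conj:cSS=Q''}. This shows that Conjecture~\ref{conj:cSS=Q''} is equivalent to Conjecture~\ref{conj:cSS=Q'}, hence to the original Conjecture~\ref{conj:cSS=Q}. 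First I would carry out this coefficient comparison for the even and the odd identity in parallel, as they differ only in the exponents $2u_j+1-(n-j)$ versus $2u_j-(n-j)$ and in the target $Q$-function.

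The genuinely hard part is not the reduction above but the verification of the per-$\xi$ identities $L_\xi(x)=R_\xi(x)$, which is what proving the conjecture really demands. To attack these I would first apply Lemma~\ref{lem:q(x,x)=2sumq(x)q(x)} to rewrite every $q_r(x,x)$ on the right as a convolution of the $q_r(x)$, so that both sides live in the single ring generated by $\{q_r(x)\}$; I would then look for a manageable expansion of $Q_{2\lambda+2\Delta}(x,x)$ and combine it with Proposition~\ref{prop:M101} to recast the weight $K^{-1}_{\xi\lambda'}$ into the same shape as the inverse-Kostka convolution $\sum_{\eta\cup\zeta=\xi}K^{-1}_{\eta(\tau_u^{-1}u-\delta)'}K^{-1}_{\zeta(\tau_v^{-1}v-\delta)'}$ appearing on the left. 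The main obstacle I anticipate is controlling this convolution for general $n$: the left-hand side sums over all pairs $u,v\in A_n$ with the Pfaffian-type signs $\mathrm{sgn}(\tau_u)\mathrm{sgn}(\tau_v)$, and matching it term-by-term against the Pfaffian structure of $Q_{2\lambda+2\Delta}(x,x)$ seems to require an explicit combinatorial model for $K^{-1}$. When $n=2$ the index set $A_2$ is small, the Pfaffian collapses to a single $Q_{(r,s)}$, and the needed inverse-Kostka entries are explicit, so the matching can be pushed through by hand; that is the route I would follow for $n=2$, leaving the general convolution identity as the remaining conjectural core.
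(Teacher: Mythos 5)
Your proposal is correct and follows essentially the same route as the paper: the equivalence of Conjecture~\ref{conj:cSS=Q''} with Conjecture~\ref{conj:cSS=Q'} is obtained exactly as in the text, by substituting Propositions~\ref{prop:LHSof(scSS)} and~\ref{prop:RHSof(scSS)} into \eqref{scSS=sQ,even} and \eqref{scSS=sQ,odd} and comparing coefficients of the linearly independent family $\{e_\xi(y^2)\mid\xi\in\widetilde{\mathscr P}^{(n)}\}$ (a point the paper uses implicitly and you rightly justify via injectivity of $z_i\mapsto y_i^2$ on monomials). Your sketched attack on the per-$\xi$ identities for $n=2$ also matches the paper's Section~5: the ``manageable expansion'' you seek is Proposition~\ref{prop:expression_of_Q}, and the inverse-Kostka matching is carried out via the E\u gecio\u glu--Remmel formula (Proposition~\ref{prop:inv_Kostka_l=2}) together with Lemma~\ref{lem:binom_coeff} and Proposition~\ref{prop:inverse_Kostka}, with general $n$ left open just as you anticipate.
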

Proposition \ref{prop:RHSof(scSS)} comes immediately from Proposition \ref{prop:M101}.
Indeed, the former identity is proved as follows:
\begin{align*}
\sum_{\lambda\in\mathscr P^{(n)}}s_\lambda(y^2)(2^{-n}Q_{2\lambda+2\Delta}(x,x))
&=\sum_{\lambda\in\mathscr P^{(n)}}\left(\sum_{\xi\in\widetilde{\mathscr P}^{(n)}}K^{-1}_{\xi\lambda'}e_\xi(y^2)\right)(2^{-n}Q_{2\lambda+2\Delta}(x,x))\\
&=\sum_{\xi\in\widetilde{\mathscr P}^{(n)}}e_\xi(y^2)\sum_{\lambda\in\mathscr P^{(n)}}2^{-n}K^{-1}_{\xi\lambda'}Q_{2\lambda+2\Delta}(x,x).
\end{align*}
The latter one is obtained in a similar way.
\par The rest of this section is devoted to proving Proposition \ref{prop:LHSof(scSS)}.
Using the definition of the Littlewood--Richardson coefficients, we obtain
\begin{align*}
&\sum_{\lambda\in\mathscr P^{(n)}}s_\lambda(y^2)\sum_{\mu,\nu\in\mathscr P^{(n)}}c^\lambda_{\mu\nu}S_{2\mu+\Delta}(x)S_{2\nu+\Delta}(x)\\
&=\sum_{\mu,\nu\in\mathscr P^{(n)}}\left(\sum_{\lambda\in\mathscr P^{(n)}}c^\lambda_{\mu\nu}s_\lambda(y^2)\right)S_{2\mu+\Delta}(x)S_{2\nu+\Delta}(x)\\
&=\sum_{\mu,\nu\in\mathscr P^{(n)}}s_\mu(y^2)s_\nu(y^2)S_{2\mu+\Delta}(x)S_{2\nu+\Delta}(x)\\
&=\left(\sum_{\mu\in\mathscr P^{(n)}}s_\mu(y^2)S_{2\mu+\Delta}(x)\right)\left(\sum_{\nu\in\mathscr P^{(n)}}s_\nu(y^2)S_{2\nu+\Delta}(x)\right).
\end{align*}
Now we transform
\begin{equation}\label{sS}
\sum_{\mu\in\mathscr P^{(n)}}s_\mu(y^2)S_{2\mu+\Delta}(x)
\end{equation}
to a linear combination of $\{e_\xi(y^2)\mid\xi\in\widetilde{\mathscr P}^{(n)}\}$.
Since
\[y_1y_2\cdots y_na_\delta(y^2)s_\mu(y^2)=y_1y_2\cdots y_na_{\mu+\delta}(y^2)=y_1y_2\cdots y_na_{2\mu+2\delta}(y)=a_{2\mu+\Delta+\delta}(y)\]
for $\mu\in\mathscr P^{(n)}$, we obtain
\begin{align*}
&y_1y_2\cdots y_na_\delta(y^2)\sum_{\mu\in\mathscr P^{(n)}}s_\mu(y^2)S_{2\mu+\Delta}(x)
=\sum_{\mu\in\mathscr P^{(n)}}a_{2\mu+\Delta+\delta}(y)S_{2\mu+\Delta}(x)\notag\\
&=\mathrm{TO}_y\left(\sum_{\rho\in\mathscr P^{(n)}}a_{\rho+\delta}(y)S_\rho(x)\right).
\end{align*}
The summation in the parenthesis can be calculated as follows:
\begin{align*}
&\sum_{\rho\in\mathscr P^{(n)}}a_{\rho+\delta}(y)S_\rho(x)
=a_\delta(y)\sum_{\rho\in\mathscr P^{(n)}}s_\rho(y)S_\rho(x)
=a_\delta(y)\prod_{i=1}^n\prod_{j=1}^n\dfrac{1+x_iy_j}{1-x_iy_j}\\
&=a_\delta(y)\prod_{j=1}^n\sum_{r_j=0}^\infty q_{r_j}(x)y_j^{r_j}
=a_\delta(y)\sum_{r_1,r_2,\ldots,r_n\geq0}\prod_{j=1}^nq_{r_j}(x)y_j^{r_j}\\
&=\left(\sum_{\sigma\in\mathfrak S_n}\mathrm{sgn}(\sigma)\sigma(y^\delta)\right)\sum_{r_1,r_2,\ldots,r_n\geq0}\prod_{j=1}^nq_{r_j}(x)y_j^{r_j}\\
&=\sum_{\sigma\in\mathfrak S_n}\mathrm{sgn}(\sigma)y_{\sigma(1)}^{n-1}y_{\sigma(2)}^{n-2}\cdots y_{\sigma(n)}^{n-n}\sum_{r_1,r_2,\ldots,r_n\geq0}\prod_{j=1}^nq_{r_{\sigma(j)}}(x)y_{\sigma(j)}^{r_{\sigma(j)}}\\
&=\sum_{\sigma\in\mathfrak S_n}\mathrm{sgn}(\sigma)\sum_{r_1,r_2,\ldots,r_n\geq0}\prod_{j=1}^nq_{r_{\sigma(j)}}(x)y_{\sigma(j)}^{n-j+r_{\sigma(j)}}\\
&=\sum_{\sigma\in\mathfrak S_n}\mathrm{sgn}(\sigma)\sum_{r_1,r_2,\ldots,r_n\geq0}\prod_{j=1}^nq_{(n-j+r_{\sigma(j)})-(n-j)}(x)y_{\sigma(j)}^{n-j+r_{\sigma(j)}}\\
&=\sum_{\sigma\in\mathfrak S_n}\mathrm{sgn}(\sigma)\sum_{s_1=n-\sigma^{-1}(1)}^\infty\sum_{s_2=n-\sigma^{-1}(2)}^\infty\cdots\sum_{s_n=n-\sigma^{-1}(n)}^\infty\prod_{j=1}^nq_{s_{\sigma(j)}-(n-j)}(x)y_{\sigma(j)}^{s_{\sigma(j)}}\\
&=\sum_{\sigma\in\mathfrak S_n}\mathrm{sgn}(\sigma)\sum_{s_1,s_2,\ldots,s_n\geq0}\prod_{j=1}^nq_{s_{\sigma(j)}-(n-j)}(x)y_{\sigma(j)}^{s_{\sigma(j)}},
\end{align*}
using Proposition \ref{prop:M225(4.7)} for the second equality.
The last equality is valid because $q_r(x)=0$ for $r<0$.
We write $s\coloneqq(s_1,s_2,\ldots,s_n)$ and obtain
\[\sum_{\rho\in\mathscr P^{(n)}}a_{\rho+\delta}(y)S_\rho(x)=\sum_{\sigma\in\mathfrak S_n}\mathrm{sgn}(\sigma)\sum_{s\in(\mathbb Z_{\geq0})^n}\prod_{j=1}^nq_{s_{\sigma(j)}-(n-j)}(x)y_{\sigma(j)}^{s_{\sigma(j)}}.\]
Therefore, we obtain
\begin{align*}
&y_1y_2\cdots y_na_\delta(y^2)\sum_{\mu\in\mathscr P^{(n)}}s_\mu(y^2)S_{2\mu+\Delta}(x)\\
&=\mathrm{TO}_y\left(\sum_{\rho\in\mathscr P^{(n)}}a_{\rho+\delta}(y)S_\rho(x)\right)\\
&=\mathrm{TO}_y\left(\sum_{\sigma\in\mathfrak S_n}\mathrm{sgn}(\sigma)\sum_{s\in(\mathbb Z_{\geq0})^n}\prod_{j=1}^nq_{s_{\sigma(j)}-(n-j)}(x)y_{\sigma(j)}^{s_{\sigma(j)}}\right)\\
&=\sum_{\sigma\in\mathfrak S_n}\mathrm{sgn}(\sigma)\sum_{u\in(\mathbb Z_{\geq0})^n}\prod_{j=1}^nq_{2u_{\sigma(j)}+1-(n-j)}(x)y_{\sigma(j)}^{2u_{\sigma(j)}+1},
\end{align*}
and we have
\begin{align*}
&a_\delta(y^2)\sum_{\mu\in\mathscr P^{(n)}}s_\mu(y^2)S_{2\mu+\Delta}(x)\\
&=\sum_{\sigma\in\mathfrak S_n}\mathrm{sgn}(\sigma)\sum_{u\in(\mathbb Z_{\geq0})^n}\prod_{j=1}^nq_{2u_{\sigma(j)}+1-(n-j)}(x)y_{\sigma(j)}^{2u_{\sigma(j)}}\\
&=\sum_{\sigma\in\mathfrak S_n}\mathrm{sgn}(\sigma)\sum_{u\in(\mathbb Z_{\geq0})^n}\prod_{j=1}^nq_{2u_j+1-(n-j)}(x)y_{\sigma(j)}^{2u_j}\\
&=\sum_{u\in(\mathbb Z_{\geq0})^n}\left(\prod_{j=1}^nq_{2u_j+1-(n-j)}(x)\right)\sum_{\sigma\in\mathfrak S_n}\mathrm{sgn}(\sigma)\left(\prod_{j=1}^ny_{\sigma(j)}^{2u_j}\right)\\
&=\sum_{u\in(\mathbb Z_{\geq0})^n}\left(\prod_{j=1}^nq_{2u_j+1-(n-j)}(x)\right)\sum_{\sigma\in\mathfrak S_n}\mathrm{sgn}(\sigma)\sigma(y^{2u})\\
&=\sum_{u\in(\mathbb Z_{\geq0})^n}\left(\prod_{j=1}^nq_{2u_j+1-(n-j)}(x)\right)a_u(y^2).
\end{align*}
Here it suffices to sum up for $u\in A_n$ because $a_u$ is antisymmetric.
Using $\tau_u$ one obtains
\[a_u(y^2)=\mathrm{sgn}(\tau_u)a_{\tau_u^{-1}u}(y^2),\]
where
\[\tau_u^{-1}u=\tau_u^{-1}(u_1,u_2,\ldots,u_n)\coloneqq(u_{\tau_u(1)},u_{\tau_u(2)},\ldots,u_{\tau_u(n)}),\]
and
\[\dfrac{a_u(y^2)}{a_\delta(y^2)}=\mathrm{sgn}(\tau_u)s_{\tau_u^{-1}u-\delta}(y^2)\]
holds.
Therefore,
\begin{align*}
&\sum_{\mu\in\mathscr P^{(n)}}s_\mu(y^2)S_{2\mu+\Delta}(x)\\
&=\sum_{u\in A_n}\left(\prod_{j=1}^nq_{2u_j+1-(n-j)}(x)\right)\dfrac{a_u(y^2)}{a_\delta(y^2)}\\
&=\sum_{u\in A_n}\left(\prod_{j=1}^nq_{2u_j+1-(n-j)}(x)\right)\mathrm{sgn}(\tau_u)s_{\tau_u^{-1}u-\delta}(y^2)\\
&=\sum_{u\in A_n}\left(\prod_{j=1}^nq_{2u_j+1-(n-j)}(x)\right)\mathrm{sgn}(\tau_u)\sum_{\xi\in\widetilde{\mathscr P}^{(n)}}K^{-1}_{\xi(\tau_u^{-1}u-\delta)'}e_\xi(y^2),
\end{align*}
using Proposition \ref{prop:M101} for the last equality.
Now the summation (\ref{sS}) is written as a linear combination of elementary symmetric polynomials, and we can write the left-hand side of (\ref{scSS=sQ,even}) in such form:
\begin{align*}
&\sum_{\lambda\in\mathscr P^{(n)}}s_\lambda(y^2)\sum_{\mu,\nu\in\mathscr P^{(n)}}c^\lambda_{\mu\nu}S_{2\mu+\Delta}(x)S_{2\nu+\Delta}(x)\\
&=\left(\sum_{\mu\in\mathscr P^{(n)}}s_\mu(y^2)S_{2\mu+\Delta}(x)\right)\left(\sum_{\nu\in\mathscr P^{(n)}}s_\nu(y^2)S_{2\nu+\Delta}(x)\right)\\
&=\left(\sum_{u\in A_n}\left(\prod_{j=1}^nq_{2u_j+1-(n-j)}(x)\right)\mathrm{sgn}(\tau_u)\sum_{\eta\in\widetilde{\mathscr P}^{(n)}}K^{-1}_{\eta(\tau_u^{-1}u-\delta)'}e_\eta(y^2)\right)\\
&\quad\cdot\left(\sum_{v\in A_n}\left(\prod_{j=1}^nq_{2v_j+1-(n-j)}(x)\right)\mathrm{sgn}(\tau_v)\sum_{\zeta\in\widetilde{\mathscr P}^{(n)}}K^{-1}_{\zeta(\tau_v^{-1}v-\delta)'}e_\zeta(y^2)\right)\\
&=\sum_{u,v\in A_n}\mathrm{sgn}(\tau_u)\mathrm{sgn}(\tau_v)\left(\prod_{j=1}^nq_{2u_j+1-(n-j)}(x)q_{2v_j+1-(n-j)}(x)\right)\\
&\qquad\cdot\sum_{\eta,\zeta\in\widetilde{\mathscr P}^{(n)}}K^{-1}_{\eta(\tau_u^{-1}u-\delta)'}K^{-1}_{\zeta(\tau_v^{-1}v-\delta)'}e_\eta(y^2)e_\zeta(y^2)\\
&=\sum_{u,v\in A_n}\mathrm{sgn}(\tau_u)\mathrm{sgn}(\tau_v)\left(\prod_{j=1}^nq_{2u_j+1-(n-j)}(x)q_{2v_j+1-(n-j)}(x)\right)\\
&\qquad\cdot\sum_{\eta,\zeta\in\widetilde{\mathscr P}^{(n)}}K^{-1}_{\eta(\tau_u^{-1}u-\delta)'}K^{-1}_{\zeta(\tau_v^{-1}v-\delta)'}e_{\eta\cup\zeta}(y^2)\\
&=\sum_{u,v\in A_n}\mathrm{sgn}(\tau_u)\mathrm{sgn}(\tau_v)\left(\prod_{j=1}^nq_{2u_j+1-(n-j)}(x)q_{2v_j+1-(n-j)}(x)\right)\\
&\qquad\cdot\sum_{\xi\in\widetilde{\mathscr P}^{(n)}}\sum_{\substack{\eta,\zeta\in\widetilde{\mathscr P}^{(n)}\\\eta\cup\zeta=\xi}}K^{-1}_{\eta(\tau_u^{-1}u-\delta)'}K^{-1}_{\zeta(\tau_v^{-1}v-\delta)'}e_\xi(y^2)\\
&=\sum_{\xi\in\widetilde{\mathscr P}^{(n)}}e_\xi(y^2)\sum_{u,v\in A_n}\mathrm{sgn}(\tau_u)\mathrm{sgn}(\tau_v)\left(\prod_{j=1}^nq_{2u_j+1-(n-j)}(x)q_{2v_j+1-(n-j)}(x)\right)\\
&\qquad\cdot\sum_{\substack{\eta,\zeta\in\widetilde{\mathscr P}^{(n)}\\\eta\cup\zeta=\xi}}K^{-1}_{\eta(\tau_u^{-1}u-\delta)'}K^{-1}_{\zeta(\tau_v^{-1}v-\delta)'}.
\end{align*}
It completes the proof of the former equality of Proposition \ref{prop:LHSof(scSS)}, and the latter one can be proved similarly.
\section{A proof of the conjecture in general case when $n=2$}
We prove Conjecture \ref{conj:cSS=Q} in general case when $n=2$ in the section using the results in the previous section.
The following theorem is the case $n=2$ of Conjecture \ref{conj:cSS=Q}, and is the main theorem of the present paper:
\begin{thm}\label{thm:cSS=Q,n=2}
Conjecture \ref{conj:cSS=Q} holds for $n=2$;
\begin{align*}
\sum_{\mu,\nu\in\mathscr P^{(2)}}c^\lambda_{\mu\nu}S_{2\mu+\Delta}(x)S_{2\nu+\Delta}(x)&=2^{-2}Q_{2\lambda+2\Delta}(x,x),\\
\sum_{\mu,\nu\in\mathscr P^{(2)}}c^\lambda_{\mu\nu}S_{2\mu+\delta}(x)S_{2\nu+\Delta}(x)&=2^{-2}Q_{2\lambda+\Delta+\delta}(x,x)
\end{align*}
hold for $\lambda\in\mathscr P^{(2)}$, where $\Delta\coloneqq(2,1)$ and $\delta\coloneqq(1,0)$.
\end{thm}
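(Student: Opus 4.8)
The plan is to prove the equivalent Conjecture \ref{conj:cSS=Q''} in the case $n=2$. By the reductions of Section 4 — the linear independence of $\{s_\lambda(y^2)\}$, which gives Conjecture \ref{conj:cSS=Q'} $\iff$ Conjecture \ref{conj:cSS=Q}, together with Propositions \ref{prop:LHSof(scSS)} and \ref{prop:RHSof(scSS)}, which rewrite both sides as $\sum_\xi e_\xi(y^2)(\cdots)$ and hence give Conjecture \ref{conj:cSS=Q'} $\iff$ Conjecture \ref{conj:cSS=Q''} upon comparing coefficients of $e_\xi(y^2)$ — it suffices to check, for every $\xi\in\widetilde{\mathscr P}^{(2)}$, the two scalar identities of Conjecture \ref{conj:cSS=Q''}. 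Here $\delta=(1,0)$, $\Delta=(2,1)$, and $\widetilde{\mathscr P}^{(2)}=\{(2^a1^b)\mid a,b\ge0\}$.

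First I would record the combinatorial data for $n=2$ explicitly. The set $A_2$ consists of the pairs $u=(u_1,u_2)$ of distinct non-negative integers; $\tau_u$ is the identity when $u_1>u_2$ and the transposition otherwise, so $\tau_u^{-1}u-\delta$ is the partition $(\max(u_1,u_2)-1,\min(u_1,u_2))$ and $\mathrm{sgn}(\tau_u)=\pm1$ accordingly. I would then tabulate the finitely many inverse Kostka numbers $K^{-1}_{\eta\mu}$ with $\eta,\mu$ having all parts $\le2$ that actually occur (using the combinatorial description of the inverse Kostka matrix, or by directly inverting it in each degree). The key finiteness remark is that for fixed $\xi$ the degree constraints $|\eta|+|\zeta|=|\xi|$ together with $|\eta|=|u|-1$ and $|\zeta|=|v|-1$ cut the a priori infinite sums over $u,v\in A_2$ and over $\eta\cup\zeta=\xi$ down to finite sums, so both sides of Conjecture \ref{conj:cSS=Q''} are explicit finite expressions.

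The second, and central, ingredient is a new expansion of the length-two functions $Q_{(r,s)}(x,x)$ as explicit integer linear combinations of products of the $q_\bullet(x)$. I would derive it from the definition of $Q_{(r,s)}$ together with the two relations underlying Lemma \ref{lem:q(x,x)=2sumq(x)q(x)}, namely $q_m(x,x)=\sum_{a+b=m}q_a(x)q_b(x)$ and $\sum_{a+b=m}(-1)^aq_a(x)q_b(x)=\delta_{m0}$, the latter causing many cross terms to telescope. This plays the role that Lemma \ref{lem:q(x,x)=2sumq(x)q(x)} played for $n=1$, and it turns the right-hand sides $\sum_\lambda2^{-2}K^{-1}_{\xi\lambda'}Q_{2\lambda+2\Delta}(x,x)$ and $\sum_\lambda2^{-2}K^{-1}_{\xi\lambda'}Q_{2\lambda+\Delta+\delta}(x,x)$, in which $2\lambda+2\Delta$ and $2\lambda+\Delta+\delta$ are length-two partitions, into manifest combinations of products $q_a(x)q_b(x)q_c(x)q_d(x)$.

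Finally I would match the two sides. On the left, the factor $\prod_j q_{2u_j+1-(2-j)}(x)q_{2v_j+1-(2-j)}(x)$ (respectively $\prod_j q_{2u_j-(2-j)}(x)q_{2v_j+1-(2-j)}(x)$) is already an explicit product of four $q_\bullet(x)$, so after inserting the inverse Kostka weights one gets a combination of the same type as the right-hand side; then I would show the difference vanishes using the relation $\sum_{a+b=m}(-1)^aq_a(x)q_b(x)=\delta_{m0}$. I expect the main obstacle to be exactly this last step: controlling the degree-truncated double sum over $u,v\in A_2$ and showing that, once weighted by the inverse Kostka numbers, its cross terms cancel and it collapses onto the single sum over $\lambda$ on the right, the matching being governed by the identities relating $\sum_{\eta\cup\zeta=\xi}K^{-1}_{\eta(\tau_u^{-1}u-\delta)'}K^{-1}_{\zeta(\tau_v^{-1}v-\delta)'}$ to the single weights $K^{-1}_{\xi\lambda'}$. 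Carrying out this cancellation uniformly in $\xi=(2^a1^b)$, rather than re-deriving it for each shape, is the delicate part, and it is where the combinatorics of the inverse Kostka matrix for two-bounded partitions does the real work.
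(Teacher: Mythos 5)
Your overall route is exactly the paper's: reduce via Section 4 to Conjecture \ref{conj:cSS=Q''} with $n=2$, expand $Q_{2\lambda+2\Delta}(x,x)$ and $Q_{2\lambda+\Delta+\delta}(x,x)$ into integer combinations of products of four $q_\bullet(x)$ using the relations behind Lemma \ref{lem:q(x,x)=2sumq(x)q(x)} (this is Proposition \ref{prop:expression_of_Q}, where the expansion is organized as a signed sum over $\sigma_1,\sigma_2\in\mathfrak S_2$ of sums over the sets $B(\lambda;\sigma_1,\sigma_2)$), and then match the result against the inverse-Kostka-weighted left-hand side. The genuine gap is that the step you yourself flag as ``the delicate part'' is the entire mathematical content of the theorem, and your proposal contains no mechanism for carrying it out. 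What is needed is the convolution identity (Proposition \ref{prop:inverse_Kostka} in the paper): for $u_2<u_1$ and $v_2<v_1$,
\[\sum_{\substack{\eta,\zeta\in\widetilde{\mathscr P}^{(2)}\\\eta\cup\zeta=\xi}}K^{-1}_{\eta(u_1-1,u_2)'}K^{-1}_{\zeta(v_1-1,v_2)'}=\sum_{\sigma_1,\sigma_2\in\mathfrak S_2}\mathrm{sgn}(\sigma_1)\mathrm{sgn}(\sigma_2)\sum_{\lambda\in C(u,v;\sigma_1,\sigma_2)}K^{-1}_{\xi\lambda'},\]
and proving it uniformly in $\xi=(2^a1^b)$ and in $u,v$ is not a matter of tabulating inverse Kostka numbers --- the families $\xi,u,v$ are infinite, so ``directly inverting in each degree'' never closes the argument. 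The paper needs the E\u{g}ecio\u{g}lu--Remmel closed formula $K^{-1}_{\lambda\mu}=(-1)^{m_2(\lambda)-m_2(\mu)}\binom{l(\lambda)-m_2(\mu)}{m_1(\lambda)}$ for two-bounded partitions (Proposition \ref{prop:inv_Kostka_l=2}) together with the nontrivial binomial identity of Lemma \ref{lem:binom_coeff} (proved by induction on $k$), which is exactly what makes the resulting sum over $\lambda_2$ telescope down to a single term. Neither ingredient, nor any substitute for them, appears in your plan.

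Two smaller misdirections are worth noting. First, you locate the final cancellation in the relation $\sum_{a+b=m}(-1)^aq_a(x)q_b(x)=\delta_{m0}$, but in the actual proof no $q$-identities are used after the expansion of Proposition \ref{prop:expression_of_Q} is in hand: both sides of Conjecture \ref{conj:cSS=Q''} become $\mathbb Z$-linear combinations of the same monomials $q_{2u_1}(x)q_{2u_2+1}(x)q_{2v_1}(x)q_{2v_2+1}(x)$, and the matching is a purely combinatorial statement about inverse Kostka entries, so your expected telescoping of $q$-products is looking in the wrong place. Second, the matching step requires passing from the sum over $u,v\in A_2$ to a sum over all $u,v\in(\mathbb Z_{\geq0})^2$, since the sets $B(\lambda;\sigma_1,\sigma_2)$ contain pairs with $u_1=u_2$ or $v_1=v_2$; this works because the contributions with $u_1=u_2$ cancel in the signed sum, as $C(u,v;\mathrm{id},\sigma_2)=C(u,v;(1,2),\sigma_2)$ in that case --- a verification absent from your outline. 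In summary: your proposal is a correct high-level plan that reproduces the paper's architecture, including the correct identification of where the difficulty lies, but as a proof it is incomplete precisely at that central point.
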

To prove the theorem, it suffices to prove the following theorem which is the case $n=2$ of Conjecture \ref{conj:cSS=Q''}:
\begin{thm}\label{thm:cSS=Q'',n=2}
For $\xi\in\widetilde{\mathscr P}^{(2)}$ the following identities hold:
\begin{align*}
&\sum_{u,v\in A_2}\mathrm{sgn}(\tau_u)\mathrm{sgn}(\tau_v)q_{2u_1}(x)q_{2u_2+1}(x)q_{2v_1}(x)q_{2v_2+1}(x)
\sum_{\substack{\eta,\zeta\in\widetilde{\mathscr P}^{(2)}\\\eta\cup\zeta=\xi}}K^{-1}_{\eta(\tau_u^{-1}u-\delta)'}K^{-1}_{\zeta(\tau_v^{-1}v-\delta)'}\\
&=\sum_{\lambda\in\mathscr P^{(2)}}2^{-2}K^{-1}_{\xi\lambda'}Q_{2\lambda+2\Delta}(x,x),\\
&\sum_{u,v\in A_2}\mathrm{sgn}(\tau_u)\mathrm{sgn}(\tau_v)q_{2u_1-1}(x)q_{2u_2}(x)q_{2v_1}(x)q_{2v_2+1}(x)
\sum_{\substack{\eta,\zeta\in\widetilde{\mathscr P}^{(2)}\\\eta\cup\zeta=\xi}}K^{-1}_{\eta(\tau_u^{-1}u-\delta)'}K^{-1}_{\zeta(\tau_v^{-1}v-\delta)'}\\
&=\sum_{\lambda\in\mathscr P^{(2)}}2^{-2}K^{-1}_{\xi\lambda'}Q_{2\lambda+\Delta+\delta}(x,x),
\end{align*}
where $A_2$ is the set of $u=(u_1,u_2)\in(\mathbb Z_{\geq0})^2$ such that $u_1\ne u_2$, and $\tau_u\in\mathfrak S_2$ satisfies $u_{\tau_u(2)}<u_{\tau_u(1)}$ for $u\in A_2$.
\end{thm}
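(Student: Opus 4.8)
The plan is to prove the two displayed identities of Theorem \ref{thm:cSS=Q'',n=2} directly and in parallel (the ``even'' and ``odd'' cases), by expanding each side, for a fixed $\xi=(2^a1^b)\in\widetilde{\mathscr P}^{(2)}$, as an explicit polynomial in the single-variable-set coefficients $q_r(x)$ and matching them. The advantage of this coefficient-extracted form, obtained in Section 4, is that the auxiliary variables $y$ have been removed, so each identity is one scalar equation indexed by $\xi$; once the support of the inverse Kostka matrix is determined, both sides become finite, completely explicit signed sums of four-fold products $q_\bullet(x)q_\bullet(x)q_\bullet(x)q_\bullet(x)$. Proving all these scalar identities then settles Theorem \ref{thm:cSS=Q'',n=2}, and hence, through the equivalences of Section 4, also Theorem \ref{thm:cSS=Q,n=2}.

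First I would record the inverse Kostka data for $n=2$. The partitions in $\widetilde{\mathscr P}^{(2)}$ are exactly the $(2^a1^b)$, and for $\lambda=(\lambda_1,\lambda_2)\in\mathscr P^{(2)}$ one has $\lambda'=(2^{\lambda_2}1^{\lambda_1-\lambda_2})$. Using Proposition \ref{prop:M101} together with the two-variable reduction $e_k=0$ for $k>2$, the entries $K^{-1}_{\eta\lambda'}$ are small integers with controlled support; for instance $K^{-1}_{(2^a1^{m-2a}),(1^m)}=(-1)^a\binom{m-a}{a}$, coming from $h_m=\sum_a(-1)^a\binom{m-a}{a}e_2^ae_1^{m-2a}$. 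For $u\in A_2$ the partition $\mu=\tau_u^{-1}u-\delta$ is the decreasingly sorted pair shifted by $(1,0)$, so $\mu'$ is pinned down, and the inner convolution reduces to $\sum_{a_1+a_2=a,\,b_1+b_2=b}K^{-1}_{(2^{a_1}1^{b_1})\mu'}K^{-1}_{(2^{a_2}1^{b_2})\nu'}$ with $\nu=\tau_v^{-1}v-\delta$. I would use these values to turn the left-hand side of each identity into an explicit signed sum of four-fold $q_\bullet(x)$-products, carrying along the factors $\mathrm{sgn}(\tau_u)\mathrm{sgn}(\tau_v)$ and the parity pattern of the indices ($2u_1,2u_2{+}1$ in the even case; $2u_1{-}1,2u_2$ against $2v_1,2v_2{+}1$ in the odd case).

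For the right-hand side the central tool is a new closed expression for Schur's $Q$-functions of length two. Since $2\lambda+2\Delta=(2\lambda_1+4,2\lambda_2+2)$ and $2\lambda+\Delta+\delta=(2\lambda_1+3,2\lambda_2+1)$ both have length two, the Pfaffian collapses to a single entry $Q_{(r,s)}(x,x)$, and its defining alternating sum combined with Lemma \ref{lem:q(x,x)=2sumq(x)q(x)} rewrites every $q_\bullet(x,x)$ as a doubled convolution of single-set coefficients. This produces $2^{-2}Q_{(r,s)}(x,x)$ as an explicit signed four-fold product of $q_\bullet(x)$'s (the factors of $2$ from the Lemma canceling the $2^{-2}$ by a degree count). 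I would then apply $\sum_\lambda K^{-1}_{\xi\lambda'}(\cdots)$ using the same inverse Kostka data, so that both sides of each identity live in the free module on the monomials $q_a(x)q_b(x)q_c(x)q_d(x)$.

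The final step is to identify the two signed four-fold sums. The only relation available is the fundamental quadratic identity $\sum_{s=0}^r(-1)^sq_s(x)q_{r-s}(x)=0$ for $r\geq1$ (with $q_0=1$ and $q_r=0$ for $r<0$), established inside the proof of Lemma \ref{lem:q(x,x)=2sumq(x)q(x)}; reducing modulo this relation, the two expressions should coincide. I expect the main obstacle to be exactly this combinatorial matching: one must reconcile the sign pattern $\mathrm{sgn}(\tau_u)\mathrm{sgn}(\tau_v)$ together with the inverse-Kostka convolution over $\eta\cup\zeta=\xi$ on the left against the alternating structure of $Q_{(r,s)}$ on the right, and do so uniformly in $\xi=(2^a1^b)$. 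Arranging the bookkeeping so that the dependence on $(a,b)$ factors cleanly, rather than fragmenting into unbounded casework, is the crux, and this is where the explicit binomial form of the inverse Kostka entries and the new expression for $Q_{(r,s)}(x,x)$ carry the weight of the argument.
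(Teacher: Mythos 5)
Your plan assembles the same two ingredients the paper actually uses---the length-two expansion of Schur's $Q$-functions obtained from the Pfaffian collapse, the definition of $Q_{(r,s)}$, and Lemma \ref{lem:q(x,x)=2sumq(x)q(x)} (this is Proposition \ref{prop:expression_of_Q}), and the explicit binomial form of the inverse Kostka entries for partitions with parts at most $2$ (the E\u{g}ecio\u{g}lu--Remmel formula, Proposition \ref{prop:inv_Kostka_l=2})---but it stops exactly where the proof has to happen. You write that after expanding both sides the two signed fourfold sums ``should coincide'' upon reduction modulo $\sum_{s=0}^r(-1)^sq_s(x)q_{r-s}(x)=0$, and you yourself flag this matching as ``the main obstacle'' and ``the crux''; that matching \emph{is} the theorem, and no mechanism for it is given. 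Moreover, the mechanism you sketch is unsound as stated: the products $q_a(x)q_b(x)q_c(x)q_d(x)$ do not span a free module, since the $q_r(x)$ satisfy the quadratic relations (and, in finitely many variables, further relations), so ``matching coefficients'' of such monomials is not well-defined, and organizing a computation modulo the ideal of relations, uniformly in $\xi=(2^a1^b)$, is precisely the unbounded bookkeeping you defer rather than resolve.

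The paper closes this gap without ever invoking relations among the $q_r(x)$ at the final stage. The key is the exact integer identity (Proposition \ref{prop:inverse_Kostka})
\[\sum_{\substack{\eta,\zeta\in\widetilde{\mathscr P}^{(2)}\\\eta\cup\zeta=\xi}}K^{-1}_{\eta(u_1-1,u_2)'}K^{-1}_{\zeta(v_1-1,v_2)'}=\sum_{\sigma_1,\sigma_2\in\mathfrak S_2}\mathrm{sgn}(\sigma_1)\mathrm{sgn}(\sigma_2)\sum_{\lambda\in C(u,v;\sigma_1,\sigma_2)}K^{-1}_{\xi\lambda'},\]
proved from Proposition \ref{prop:inv_Kostka_l=2} together with a binomial identity (Lemma \ref{lem:binom_coeff}) established by induction on $k$. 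This converts the inverse Kostka convolution attached to a fixed $(u,v)$ into signed \emph{single} entries $K^{-1}_{\xi\lambda'}$, summed over the set $C(u,v;\sigma_1,\sigma_2)$, which is dual to the set $B(\lambda;\sigma_1,\sigma_2)$ of Proposition \ref{prop:expression_of_Q} in the sense that $\lambda\in C(u,v;\sigma_1,\sigma_2)$ if and only if $(u,v)\in B(\lambda;\sigma_1,\sigma_2)$. The equivariance $C(\tau_1^{-1}u,\tau_2^{-1}v;\sigma_1,\sigma_2)=C(u,v;\tau_1\sigma_1,\tau_2\sigma_2)$ absorbs the signs $\mathrm{sgn}(\tau_u)\mathrm{sgn}(\tau_v)$; the sum over $u,v\in A_2$ is then extended to all of $(\mathbb Z_{\geq0})^2$, the diagonal terms cancelling in signed pairs because $C(u,v;\mathrm{id},\sigma_2)=C(u,v;(1,2),\sigma_2)$ when $u_1=u_2$; interchanging the $(u,v)$- and $\lambda$-summations via the duality then exhibits the left-hand side, term by term, as $\sum_{\lambda}2^{-2}K^{-1}_{\xi\lambda'}Q_{2\lambda+2\Delta}(x,x)$ by Proposition \ref{prop:expression_of_Q}. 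To make your proposal a proof you would need to supply something equivalent to Proposition \ref{prop:inverse_Kostka} and this interchange of summation; as it stands, the argument does not close.
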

We prove the theorems by giving a new expression of Schur's $Q$-functions corresponding to partitions of length $2$, and considering combinatorics of the inverse Kostka matrix.
First, we give a new expression of Schur's $Q$-functions.
Given $\lambda=(\lambda_1,\lambda_2)\in\mathscr P^{(2)}$ and $\sigma_1,\sigma_2\in\mathfrak S_2$, let $B(\lambda;\sigma_1,\sigma_2)$ be the set of pairs of pairs of non-negative integers $(u,v)=((u_1,u_2),(v_1,v_2))\in(\mathbb Z_{\geq0})^2\times(\mathbb Z_{\geq0})^2$ which satisfy $u_1+u_2+v_1+v_2=\lambda_1+\lambda_2+2$ and $u_{\sigma_1(2)}+v_{\sigma_2(2)}\leq\lambda_2$.
\begin{prop}\label{prop:expression_of_Q}
For $\lambda\in\mathscr P^{(2)}$, the following equalities hold:
\begin{align*}
Q_{2\lambda+2\Delta}(x,x)
&=4\sum_{\sigma_1,\sigma_2\in\mathfrak S_2}\mathrm{sgn}(\sigma_1)\mathrm{sgn}(\sigma_2)\sum_{(u,v)\in B(\lambda;\sigma_1,\sigma_2)}q_{2u_1}(x)q_{2u_2+1}(x)q_{2v_1}(x)q_{2v_2+1}(x),\\
Q_{2\lambda+\Delta+\delta}(x,x)
&=4\sum_{\sigma_1,\sigma_2\in\mathfrak S_2}\mathrm{sgn}(\sigma_1)\mathrm{sgn}(\sigma_2)\sum_{(u,v)\in B(\lambda;\sigma_1,\sigma_2)}q_{2u_1}(x)q_{2u_2+1}(x)q_{2v_1-1}(x)q_{2v_2}(x).
\end{align*}
\end{prop}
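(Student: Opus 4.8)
The plan is to reduce each Schur $Q$-function to a single two-index function $Q_{(r,s)}$ and then convert the $q_m(x,x)$ into single-variable factors via Lemma~\ref{lem:q(x,x)=2sumq(x)q(x)}. Since $2\lambda+2\Delta$ and $2\lambda+\Delta+\delta$ have length exactly $2$, the $2\times2$ Pfaffians collapse to their single off-diagonal entry: $Q_{2\lambda+2\Delta}(x,x)=Q_{(2\lambda_1+4,\,2\lambda_2+2)}(x,x)$ and $Q_{2\lambda+\Delta+\delta}(x,x)=Q_{(2\lambda_1+3,\,2\lambda_2+1)}(x,x)$. Writing $(r,s)$ for the two indices (with $r>s$ in both cases), I would start from $Q_{(r,s)}(x,x)=\sum_{i=0}^{s}c_i\,q_{r+i}(x,x)\,q_{s-i}(x,x)$ with $c_0=1$ and $c_i=2(-1)^i$ for $i\ge1$.

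The cleanest bookkeeping is to package this through a bivariate generating function. Set $Q(t;x)\coloneqq\sum_{r\ge0}q_r(x)t^r$; a short computation (in the spirit of the proof of Lemma~\ref{lem:q(x,x)=2sumq(x)q(x)}) yields $\sum_{r,s\ge0}Q_{(r,s)}(z)\,u^rv^s=\frac{u-v}{u+v}\,Q(u;z)Q(v;z)$, because $\frac{u-v}{u+v}=1+2\sum_{k\ge1}(-1)^k(v/u)^k$ reproduces exactly the coefficients $c_i$. Specialising $z=(x,x)$ and using $Q(t;x,x)=Q(t;x)^2$ together with $Q(t;x)Q(-t;x)=1$, I would project onto the part even in both $u$ and $v$ (the relevant part for the first identity, where $r,s$ are even; the second identity uses the odd--odd projection). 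Writing $E,O$ for the even and odd parts of $Q(\cdot;x,x)$ and $Q_{\mathrm{even}},Q_{\mathrm{odd}}$ for those of $Q(\cdot;x)$, the relations $E=Q_{\mathrm{even}}^2+Q_{\mathrm{odd}}^2$ and $O=2Q_{\mathrm{even}}Q_{\mathrm{odd}}$ are precisely Lemma~\ref{lem:q(x,x)=2sumq(x)q(x)}, and the even--even projection simplifies to
\[\frac{u^2+v^2}{u^2-v^2}\,E(u)E(v)-\frac{2uv}{u^2-v^2}\,O(u)O(v).\]

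I would then extract the coefficient of $u^rv^s$ after substituting the single-variable formulas for $E$ and $O$. Expanding $\frac{1}{u^2-v^2}=\sum_{k\ge0}u^{-2k-2}v^{2k}$ turns the extraction into a bounded summation: matching the exponents of $u$ and $v$ with $r$ and $s$ forces the total-degree condition $u_1+u_2+v_1+v_2=\lambda_1+\lambda_2+2$, while nonnegativity of the summation index $k$ becomes an inequality $u_a+v_b\le\lambda_2$ for suitable $a,b\in\{1,2\}$. Collecting the contributions into the shape $q_{2u_1}(x)q_{2u_2+1}(x)q_{2v_1}(x)q_{2v_2+1}(x)$, the difference of the two displayed terms should produce the four threshold conditions with alternating signs, i.e. the coefficient $\sum_{\sigma_1,\sigma_2\in\mathfrak S_2}\mathrm{sgn}(\sigma_1)\mathrm{sgn}(\sigma_2)\,[\,u_{\sigma_1(2)}+v_{\sigma_2(2)}\le\lambda_2\,]$, which is exactly the signed sum over $B(\lambda;\sigma_1,\sigma_2)$. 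The second identity follows by the same route from the odd--odd projection, after the relabelling that turns a factor $q_{\mathrm{even}}q_{\mathrm{odd}}$ coming from the $v$-variable into $q_{2v_1-1}(x)q_{2v_2}(x)$.

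The main obstacle is the sign bookkeeping in this last step. Both prefactors $\frac{u^2+v^2}{u^2-v^2}$ and $\frac{2uv}{u^2-v^2}$ have only nonnegative coefficients, so the alternating signs of the inclusion--exclusion coefficient cannot come from them directly; they must emerge from the difference of the two terms together with the internal structure $E=Q_{\mathrm{even}}^2+Q_{\mathrm{odd}}^2$, once every contribution is rewritten in the fixed product form above. Since these products are symmetric in their factors, the seemingly off-pattern pieces $Q_{\mathrm{even}}(u)^2Q_{\mathrm{odd}}(v)^2$ and the like coincide, as honest monomials, with the target pattern and must be reorganised and recombined. Carrying out this reorganisation and checking that it reproduces precisely the four signed threshold terms indexed by $(\sigma_1,\sigma_2)\in\mathfrak S_2\times\mathfrak S_2$, uniformly for both identities, is the crux; the remaining degree bookkeeping is routine.
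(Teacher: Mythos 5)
Your setup is sound as far as it goes: the Pfaffian reduction to $Q_{(2\lambda_1+4,2\lambda_2+2)}(x,x)$ and $Q_{(2\lambda_1+3,2\lambda_2+1)}(x,x)$ is correct, the generating function $\frac{u-v}{u+v}\,Q(u;z)Q(v;z)$ does encode the coefficients $c_i$ (modulo the harmless caveat that the identity holds only for the part with nonnegative exponents and needs a $-1$ correction at $(0,0)$), and I checked that your even--even projection $\frac{u^2+v^2}{u^2-v^2}E(u)E(v)-\frac{2uv}{u^2-v^2}O(u)O(v)$ is right: extracting $[u^{2R}v^{2S}]$ from it returns exactly $q_{2R}(x,x)q_{2S}(x,x)+2\sum_k q_{2R+2k}(x,x)q_{2S-2k}(x,x)-2\sum_k q_{2R+2k+1}(x,x)q_{2S-2k-1}(x,x)$. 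But that expression is precisely the paper's \emph{first} display in its proof of Proposition \ref{prop:expression_of_Q} (the split of the $Q_{(r,s)}$ expansion into even and odd shifts), so the generating-function packaging has bought you nothing beyond the opening line. Everything the paper actually spends its proof on comes after this point, and it is exactly the part you defer: converting each $q_m(x,x)$ into single-variable products, absorbing the standalone term $q_{2\lambda_1+4}(x,x)q_{2\lambda_2+2}(x,x)$ into the $j=0$ slot of one of the sums, and reindexing the four resulting double sums so that the summation ranges become the conditions $u_1+u_2+v_1+v_2=\lambda_1+\lambda_2+2$ and $u_{\sigma_1(2)}+v_{\sigma_2(2)}\leq\lambda_2$ defining the four sets $B(\lambda;\sigma_1,\sigma_2)$ with signs $\mathrm{sgn}(\sigma_1)\mathrm{sgn}(\sigma_2)$. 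You name this step ``the crux'' and do not carry it out; since it constitutes essentially the entire content of the proposition, the proposal is a plan rather than a proof.

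There is also a substantive inaccuracy that makes the deferred step harder than you suggest. The relations $E=Q_{\mathrm{even}}^2+Q_{\mathrm{odd}}^2$ and $O=2Q_{\mathrm{even}}Q_{\mathrm{odd}}$ are \emph{not} Lemma \ref{lem:q(x,x)=2sumq(x)q(x)}; they are the trivial square expansion, i.e.\ equation (\ref{q(x,x)}). The lemma is the sharper statement that (in positive degrees) $2Q_{\mathrm{even}}^2=E+1$ and $2Q_{\mathrm{odd}}^2=E-1$, equivalent to $Q(t;x)Q(-t;x)=1$, and it is these sharper relations that let one replace each even-index $q_{2m}(x,x)$ by a \emph{single} parity pattern ($2\sum q_{2k}q_{2m-2k}$ or $2\sum q_{2k+1}q_{2m-2k-1}$, chosen to fit the target shape) with the multiplicity that produces the overall factor $4$. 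If instead you expand $E(u)E(v)$ naively as you propose, you generate cross terms such as $Q_{\mathrm{even}}(u)^2Q_{\mathrm{odd}}(v)^2$, whose monomials $q_{2a}q_{2b}q_{2c+1}q_{2d+1}$ can indeed be relabelled into the shape $q_{2u_1}q_{2u_2+1}q_{2v_1}q_{2v_2+1}$ --- but the relabelling detaches the exponent data of $u$ and $v$ from the labels $u_1,u_2,v_1,v_2$, and it is exactly those labels that enter the threshold inequality $u_{\sigma_1(2)}+v_{\sigma_2(2)}\leq\lambda_2$. Matching the signed counts monomial by monomial after this scrambling is a genuine combinatorial verification, comparable in length to the paper's own computation, and nothing in the proposal does it. The paper avoids the scrambling altogether by expanding one $q(x,x)$ factor at a time: the small-index factor via (\ref{q(x,x)}) (producing the threshold), then the large-index factor via Lemma \ref{lem:q(x,x)=2sumq(x)q(x)} in the parity pattern dictated by the target (producing the free index and the factor $4$). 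To complete your argument you would need to reproduce that bookkeeping, at which point the generating-function layer is dispensable.
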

\begin{proof}
From the definition of Schur's $Q$-functions we obtain the following calculation:
\begin{align*}
Q_{2\lambda+2\Delta}(x,x)
&=Q_{(2\lambda_1+4,2\lambda_2+2)}(x,x)\\
&=q_{2\lambda_1+4}(x,x)q_{2\lambda_2+2}(x,x)+2\sum_{i=1}^{2\lambda_2+2}(-1)^iq_{2\lambda_1+4+i}(x,x)q_{2\lambda_2+2-i}(x,x)\\
&=q_{2\lambda_1+4}(x,x)q_{2\lambda_2+2}(x,x)-2\sum_{j=0}^{\lambda_2}q_{2\lambda_1+5+2j}(x,x)q_{2\lambda_2+1-2j}(x,x)\\
&\quad+2\sum_{j=1}^{\lambda_2+1}q_{2\lambda_1+4+2j}(x,x)q_{2\lambda_2+2-2j}(x,x).
\end{align*}
Using (\ref{q(x,x)}) and Lemma \ref{lem:q(x,x)=2sumq(x)q(x)},
\begin{align*}
&Q_{2\lambda+2\Delta}(x,x)\\
&=2q_{2\lambda_1+4}(x,x)\sum_{k=0}^{\lambda_2}q_{2k+1}(x)q_{2\lambda_2-2k+1}(x)\\
&\quad-2\sum_{j=0}^{\lambda_2}q_{2\lambda_1+5+2j}(x,x)\sum_{i=0}^{2\lambda_2+1-2j}q_i(x)q_{2\lambda_2+1-2j-i}(x)\\
&\quad+2\sum_{j=1}^{\lambda_2+1}q_{2\lambda_1+4+2j}(x,x)\sum_{i=0}^{2\lambda_2+2-2j}q_i(x)q_{2\lambda_2+2-2j-i}(x)
\end{align*}
holds, and
\begin{align*}
&Q_{2\lambda+2\Delta}(x,x)\\
&=2q_{2\lambda_1+4}(x,x)\sum_{k=0}^{\lambda_2}q_{2k+1}(x)q_{2\lambda_2-2k+1}(x)\\
&\quad-2\sum_{j=0}^{\lambda_2}q_{2\lambda_1+5+2j}(x,x)\sum_{u_1=0}^{\lambda_2-j}q_{2u_1}(x)q_{2\lambda_2+1-2j-2u_1}(x)\\
&\quad-2\sum_{j=0}^{\lambda_2}q_{2\lambda_1+5+2j}(x,x)\sum_{u_2=0}^{\lambda_2-j}q_{2u_2+1}(x)q_{2\lambda_2-2j-2u_2}(x)\\
&\quad+2\sum_{j=1}^{\lambda_2+1}q_{2\lambda_1+4+2j}(x,x)\sum_{u_1=0}^{\lambda_2+1-j}q_{2u_1}(x)q_{2\lambda_2+2-2j-2u_1}(x)\\
&\quad+2\sum_{j=1}^{\lambda_2+1}q_{2\lambda_1+4+2j}(x,x)\sum_{u_2=0}^{\lambda_2-j}q_{2u_2+1}(x)q_{2\lambda_2+1-2j-2u_2}(x)\\
&=-2\sum_{j=0}^{\lambda_2}q_{2\lambda_1+5+2j}(x,x)\sum_{u_1=0}^{\lambda_2-j}q_{2u_1}(x)q_{2\lambda_2+1-2j-2u_1}(x)\\
&\quad-2\sum_{j=0}^{\lambda_2}q_{2\lambda_1+5+2j}(x,x)\sum_{u_2=0}^{\lambda_2-j}q_{2u_2+1}(x)q_{2\lambda_2-2j-2u_2}(x)\\
&\quad+2\sum_{j=1}^{\lambda_2+1}q_{2\lambda_1+4+2j}(x,x)\sum_{u_1=0}^{\lambda_2+1-j}q_{2u_1}(x)q_{2\lambda_2+2-2j-2u_1}(x)\\
&\quad+2\sum_{j=0}^{\lambda_2+1}q_{2\lambda_1+4+2j}(x,x)\sum_{u_2=0}^{\lambda_2-j}q_{2u_2+1}(x)q_{2\lambda_2+1-2j-2u_2}(x).
\end{align*}
The first summation can be calculated as follows:
\begin{align*}
&-2\sum_{j=0}^{\lambda_2}q_{2\lambda_1+5+2j}(x,x)\sum_{u_1=0}^{\lambda_2-j}q_{2u_1}(x)q_{2\lambda_2+1-2j-2u_1}(x)\\
&=-2\sum_{u_1=0}^{\lambda_2}\sum_{j=0}^{\lambda_2-u_1}q_{2\lambda_1+5+2j}(x,x)q_{2u_1}(x)q_{2\lambda_2+1-2j-2u_1}(x)\\
&=-2\sum_{u_1=0}^{\lambda_2}\sum_{v_2=0}^{\lambda_2-u_1}q_{2\lambda_1+5+2(\lambda_2-u_1-v_2)}(x,x)q_{2u_1}(x)q_{2\lambda_2+1-2(\lambda_2-u_1-v_2)-2u_1}(x)\\
&=-2\sum_{u_1=0}^{\lambda_2}\sum_{v_2=0}^{\lambda_2-u_1}q_{2(\lambda_1+\lambda_2-u_1-v_2+2)+1}(x,x)q_{2u_1}(x)q_{2v_2+1}(x).
\end{align*}
Using Lemma \ref{lem:q(x,x)=2sumq(x)q(x)} again, one obtains
\begin{align*}
&-2\sum_{j=0}^{\lambda_2}q_{2\lambda_1+5+2j}(x,x)\sum_{u_1=0}^{\lambda_2-j}q_{2u_1}(x)q_{2\lambda_2+1-2j-2u_1}(x)\\
&=-4\sum_{u_1=0}^{\lambda_2}\sum_{v_2=0}^{\lambda_2-u_1}\sum_{u_2=0}^{\lambda_1+\lambda_2-u_1-v_2+2}q_{2u_2+1}(x)q_{2(\lambda_1+\lambda_2-u_1-u_2-v_2+2)}(x)q_{2u_1}(x)q_{2v_2+1}(x)\\
&=-4\sum_{(u,v)\in B(\lambda;(1,2),\mathrm{id})}q_{2u_1}(x)q_{2u_2+1}(x)q_{2v_1}(x)q_{2v_2+1}(x),
\end{align*}
where we write the elements of $\mathfrak S_2$ as $\mathfrak S_2=\{\mathrm{id},(1,2)\}$.
The other summations can also be calculated similarly as
\begin{align*}
&-2\sum_{j=0}^{\lambda_2}q_{2\lambda_1+5+2j}(x,x)\sum_{u_2=0}^{\lambda_2-j}q_{2u_2+1}(x)q_{2\lambda_2-2j-2u_2}(x)\\
&=-4\sum_{(u,v)\in B(\lambda;\mathrm{id},(1,2))}q_{2u_1}(x)q_{2u_2+1}(x)q_{2v_1}(x)q_{2v_2+1}(x),
\end{align*}
\begin{align*}
&2\sum_{j=1}^{\lambda_2+1}q_{2\lambda_1+4+2j}(x,x)\sum_{u_1=0}^{\lambda_2+1-j}q_{2u_1}(x)q_{2\lambda_2+2-2j-2u_1}(x)\\
&=4\sum_{(u,v)\in B(\lambda;(1,2),(1,2))}q_{2u_1}(x)q_{2u_2+1}(x)q_{2v_1}(x)q_{2v_2+1}(x),
\end{align*}
and
\begin{align*}
&2\sum_{j=0}^{\lambda_2+1}q_{2\lambda_1+4+2j}(x,x)\sum_{u_2=0}^{\lambda_2-j}q_{2u_2+1}(x)q_{2\lambda_2+1-2j-2u_2}(x)\\
&=4\sum_{(u,v)\in B(\lambda;\mathrm{id},\mathrm{id})}q_{2u_1}(x)q_{2u_2+1}(x)q_{2v_1}(x)q_{2v_2+1}(x).
\end{align*}
Therefore, we obtain
\begin{align*}
&Q_{2\lambda+2\Delta}(x,x)\\
&=-4\sum_{(u,v)\in B(\lambda;(1,2),\mathrm{id})}q_{2u_1}(x)q_{2u_2+1}(x)q_{2v_1}(x)q_{2v_2+1}(x)\\
&-4\sum_{(u,v)\in B(\lambda;\mathrm{id},(1,2))}q_{2u_1}(x)q_{2u_2+1}(x)q_{2v_1}(x)q_{2v_2+1}(x)\\
&+4\sum_{(u,v)\in B(\lambda;(1,2),(1,2))}q_{2u_1}(x)q_{2u_2+1}(x)q_{2v_1}(x)q_{2v_2+1}(x)\\
&+4\sum_{(u,v)\in B(\lambda;\mathrm{id},\mathrm{id})}q_{2u_1}(x)q_{2u_2+1}(x)q_{2v_1}(x)q_{2v_2+1}(x)\\
&=4\sum_{\sigma_1,\sigma_2\in\mathfrak S_2}\mathrm{sgn}(\sigma_1)\mathrm{sgn}(\sigma_2)\sum_{(u,v)\in B(\lambda;\sigma_1,\sigma_2)}q_{2u_1}(x)q_{2u_2+1}(x)q_{2v_1}(x)q_{2v_2+1}(x)
\end{align*}
and it completes the proof of the former equality.
The latter equality can be proved in a similar way.
\end{proof}
Next, we consider the inverse Kostka matrix.
We start with a lemma on binomial coefficients.
Write $\binom ab\coloneqq0$ if $a<0$ or $b<0$.
\begin{lem}\label{lem:binom_coeff}
The following holds for non-negative integers $n,m$ and $0\leq k\leq n+m$:
\[\binom nm=\sum_{i=0}^m\binom{n-k+i}{m-i}\binom{k-i}{i}+\sum_{i=0}^{m-1}\binom{n-k+i}{m-i-1}\binom{k-i-1}{i}.\]
\end{lem}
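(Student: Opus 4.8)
The plan is to prove the identity by induction on $k$, exploiting the fact that the right-hand side has a clean generating-function meaning. If one sets $U_k(z)=\sum_{i\ge0}\binom{k-i}{i}z^i$ (the Fibonacci polynomial, satisfying $U_k=U_{k-1}+zU_{k-2}$), then formally the two sums assemble into $[x^m]\,(1+x)^{n-k}\bigl(U_k(x+x^2)+x\,U_{k-1}(x+x^2)\bigr)$, and the closed form coming from $1+4(x+x^2)=(1+2x)^2$ collapses the bracket to $(1+x)^k$, leaving $[x^m](1+x)^n=\binom nm$. This explains why the identity should hold and why the weights $\binom{k-i}{i}$ and $\binom{k-i-1}{i}$ appear. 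However, because of the convention $\binom ab=0$ for $a<0$, this coefficient extraction is not literally a polynomial manipulation when $n-k+i<0$; so for a rigorous argument I would instead run the induction, in which every binomial that appears has nonnegative arguments whenever it is nonzero.

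For the base case $k=0$ the first weight is $\binom{-i}{i}$, which vanishes for $i\ge1$ and equals $1$ for $i=0$, so the first sum reduces to its $i=0$ term $\binom nm$; the second weight is $\binom{-i-1}{i}$, which vanishes for every $i\ge0$, so the second sum is empty. Hence the right-hand side equals $\binom nm$ when $k=0$.

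For the inductive step I would write the two sums as $A(k)$ and $B(k)$ and show $A(k)+B(k)=A(k-1)+B(k-1)$ by applying Pascal's rule repeatedly. First I would raise the upper index using $\binom{n-k+i}{m-i}=\binom{n-k+1+i}{m-i}-\binom{n-k+i}{m-i-1}$; the subtracted term combines with $B(k)$ via $\binom{k-i-1}{i}-\binom{k-i}{i}=-\binom{k-i-1}{i-1}$. Next I would split $\binom{k-i}{i}=\binom{k-1-i}{i}+\binom{k-1-i}{i-1}$, which produces $A(k-1)$ together with a shifted sum; after reindexing $i\mapsto i+1$ and a final Pascal step $\binom{n-k+2+j}{m-j-1}=\binom{n-k+1+j}{m-j-1}+\binom{n-k+1+j}{m-j-2}$, the leftover terms reassemble into $B(k-1)$. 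Telescoping from $k=0$ then yields the claim for all $0\le k\le n+m$.

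The main obstacle I anticipate is purely the boundary bookkeeping forced by the convention: Pascal's rule $\binom ab=\binom{a-1}{b}+\binom{a-1}{b-1}$ holds for all integers under this convention \emph{except} when $a=b=0$, where the left side is $1$ but the right side is $0$. I would therefore have to verify that none of the Pascal applications ever lands on a binomial with top and bottom both zero. A short inspection shows the dangerous cases force either $k=0$ (in the split of $\binom{k-i}{i}$) or $k=n+m+1$ (in the two index-raising steps, where the offending term needs $i=m$ and $n-k+1+m=0$); both are excluded precisely by the hypothesis $1\le k\le n+m$ used in the inductive step. This is exactly where the otherwise unmotivated constraint $0\le k\le n+m$ in the statement gets consumed.
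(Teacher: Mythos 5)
Your proof is correct and is essentially the paper's own argument: induction on $k$ using Pascal's rule on both binomial factors, with the same identification of where the convention $\binom ab=0$ could break Pascal (only at top $=$ bottom $=0$, forcing $k=0$ or $k=n+m+1$, exactly what $1\le k\le n+m$ excludes) --- the paper handles the same degenerate case via its explicit $i=k-n-1$ check, and your formulation as the invariance $A(k)+B(k)=A(k-1)+B(k-1)$ is just the paper's inductive step run in the opposite direction. The generating-function remark with $U_k(x+x^2)+xU_{k-1}(x+x^2)=(1+x)^k$ is a nice motivation the paper lacks (the paper instead gives a combinatorial cell-tiling interpretation after the proof), but since you correctly discard it for rigor when $k>n$, it does not change the substance.
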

\begin{proof}
We use induction on $k$.
When $k=0$ the statement is obvious.
Suppose that $k\geq1$ and that the statement holds for $k-1$.
From the induction hypothesis
\begin{align*}
\binom nm
&=\sum_{i=0}^m\binom{n-k+1+i}{m-i}\binom{k-1-i}{i}+\sum_{i=0}^{m-1}\binom{n-k+1+i}{m-i-1}\binom{k-i-2}{i}\\
&=\sum_{i=0}^m\binom{n-k+1+i}{m-i}\binom{k-1-i}{i}+\sum_{i=1}^m\binom{n-k+i}{m-i}\binom{k-1-i}{i-1}
\end{align*}
holds.
If $i=k-n-1$, then
\[\binom{n-k+1+i}{m-i}=\binom{0}{m+n-k+1}=0,\]
\[\binom{n-k+i}{m-i}+\binom{n-k+i}{m-i-1}=\binom{-1}{m+n-k+1}+\binom{-1}{m+n-k}=0\]
hold and we have
\[\binom{n-k+1+i}{m-i}=\binom{n-k+i}{m-i}+\binom{n-k+i}{m-i-1}.\]
Even if $i\ne k-n-1$, the equality
\[\binom{n-k+1+i}{m-i}=\binom{n-k+i}{m-i}+\binom{n-k+i}{m-i-1}\]
holds.
Therefore,
\begin{align*}
\binom nm&=\sum_{i=0}^m\binom{n-k+i}{m-i}\binom{k-1-i}{i}+\sum_{i=0}^{m-1}\binom{n-k+i}{m-i-1}\binom{k-1-i}{i}\\
&\qquad+\sum_{i=1}^m\binom{n-k+i}{m-i}\binom{k-1-i}{i-1}\\
&=\sum_{i=0}^m\binom{n-k+i}{m-i}\binom{k-i}{i}+\sum_{i=0}^{m-1}\binom{n-k+i}{m-i-1}\binom{k-1-i}{i}
\end{align*}
and the statement holds for $k$.
\end{proof}
We can interpret the lemma as the enumeration of partitions of continuous $n+m$ cells into $n-m$ single cells and $m$ pairs of adjacent two cells.
The number of such partitions is equal to $\binom nm$, which is the number of sequences of $n-m$ single cells and $m$ pairs of two cells.
Now we count the number of the partitions of cells which do not have the pair of the $k$th and the $(k+1)$st cells as its part.
The number of partitions of the first $k$ cells into $k-2i$ single cells and $i$ pairs of adjacent two cells is $\binom{k-i}{i}$, and the number of partitions of the last $n+m-k$ cells into $n-m-k+2i$ single cells and $m-i$ pairs of adjacent two cells is $\binom{n-k+i}{m-i}$.
Hence there are
\[\sum_{i=0}^m\binom{n-k+i}{m-i}\binom{k-i}{i}\]
such partitions of $n+m$ cells which do not have the pair of the $k$th and the $(k+1)$st cells as its part.
The number of such partitions which have the pair of the $k$th and the $(k+1)$st cells as its part can be enumerated similarly.
It is equal to the number of partitions of $n+m-2$ cells into $n-m$ single cells and $m-1$ pairs of adjacent two cells which do not have the pair of the $(k-1)$st and the $k$th cells as its part, and is equal to
\[\sum_{i=0}^{m-1}\binom{n-k+i}{m-i-1}\binom{k-i-1}{i}.\]
\par E\u gecio\u glu and Remmel~\cite{ER1990} interpreted the inverse Kostka matrix combinatorially and gave an exact formula for $K^{-1}_{\lambda\mu}$ for $\lambda,\mu\in\mathscr P^{(2)}$ as follows:
\begin{prop}[{\cite[Corollary 3]{ER1990}}]\label{prop:inv_Kostka_l=2}
For partitions $\lambda,\mu\in\widetilde{\mathscr P}^{(2)}$ such that $m_1(\lambda)+2m_2(\lambda)=m_1(\mu)+2m_2(\mu)$, it holds that
\[K^{-1}_{\lambda\mu}=(-1)^{m_2(\lambda)-m_2(\mu)}\binom{l(\lambda)-m_2(\mu)}{m_1(\lambda)}.\]
\end{prop}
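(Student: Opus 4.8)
The statement is Corollary~3 of \cite{ER1990}, so one option is simply to cite it; my plan instead is to recover it by specializing E\u gecio\u glu and Remmel's combinatorial description of the inverse Kostka matrix to shapes with at most two columns, where the enumeration collapses to exactly the domino/singleton picture behind Lemma~\ref{lem:binom_coeff}.

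First I would recall that $K^{-1}_{\lambda\mu}$ is a signed count of special rim hook tableaux, that is, tilings of the Young diagram of one of the two partitions by border strips whose sizes are the parts of the other, subject to the condition that every strip meets the first column, each strip of row-span $h$ carrying the sign $(-1)^{h-1}$. Writing $\lambda=(1^{m_1(\lambda)}2^{m_2(\lambda)})$ and $\mu=(1^{m_1(\mu)}2^{m_2(\mu)})$ (both lie in $\widetilde{\mathscr P}^{(2)}$, so their diagrams have at most two columns), I would specialize this to the two-column diagram of $\mu$ tiled by $m_2(\lambda)$ dominoes and $m_1(\lambda)$ unit strips. The point is that the tiling is almost entirely forced: a cell of the second column cannot be covered by a unit strip (it misses the first column) nor by a vertical domino (a vertical domino in column two also misses column one), so each of the $m_2(\mu)$ second-column cells must be covered by a horizontal domino filling its whole row. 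This consumes $m_2(\mu)$ of the dominoes and leaves the bottom part of the first column, a single vertical strip of $m_1(\mu)$ cells, to be tiled by the remaining $m_2(\lambda)-m_2(\mu)$ (vertical) dominoes and the $m_1(\lambda)$ unit strips.

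What remains is a one-dimensional count: the number of arrangements of $m_2(\lambda)-m_2(\mu)$ dominoes and $m_1(\lambda)$ singletons in a row, which is $\binom{(m_2(\lambda)-m_2(\mu))+m_1(\lambda)}{m_1(\lambda)}=\binom{l(\lambda)-m_2(\mu)}{m_1(\lambda)}$, using $l(\lambda)=m_1(\lambda)+m_2(\lambda)$; the conventions $\binom ab=0$ for $a<0$ or $b<0$ handle the degenerate cases $m_2(\lambda)<m_2(\mu)$ (and $m_2(\lambda)<m_2(\mu)$, where no valid tiling exists) in the expected way. Every such tableau has exactly $m_2(\lambda)-m_2(\mu)$ vertical dominoes and no other strip of row-span $>1$, so they all share the sign $(-1)^{m_2(\lambda)-m_2(\mu)}$; factoring it out yields the claimed formula. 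This is precisely the strip-tiling enumeration underlying the combinatorial reading of Lemma~\ref{lem:binom_coeff}.

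The main obstacle is the first step: the special-rim-hook description of $K^{-1}$ is the substantive input, and it is not established in the excerpt. If I wanted a fully self-contained argument I would instead verify the formula directly from $\sum_\nu \widetilde K^{-1}_{\lambda\nu}K_{\nu\mu}=\delta_{\lambda\mu}$, where $\widetilde K^{-1}$ denotes the proposed right-hand side. A pleasant simplification is available here: since $K$ (hence $K^{-1}$) is unitriangular for the dominance order, the only $\nu$ contributing satisfy $\lambda\trianglerighteq\nu\trianglerighteq\mu$, and $\lambda_1\le2$ forces $\nu_1\le\lambda_1\le2$, so the entire sum stays inside $\widetilde{\mathscr P}^{(2)}$. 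The verification then reduces to a convolution of two-column Kostka numbers against the proposed binomials, and the identity of Lemma~\ref{lem:binom_coeff} is exactly of the shape needed to close it; the cost of this alternative route is that one must first compute the two-column Kostka numbers $K_{\nu\mu}$ explicitly, which is the routine-but-nontrivial part.
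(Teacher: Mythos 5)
Your proposal is correct, but note that the paper contains no proof of this statement at all: Proposition~\ref{prop:inv_Kostka_l=2} is quoted verbatim from \cite[Corollary 3]{ER1990}, so the paper's ``route'' is pure citation, and what you have done is supply a genuine derivation by specializing E\u gecio\u glu and Remmel's general theorem ($K^{-1}_{\lambda\mu}$ as a signed count of special rim hook tabloids of shape $\mu$ with hook sizes the parts of $\lambda$, each hook meeting the first column, a hook spanning $h$ rows carrying sign $(-1)^{h-1}$). Your forced-tiling analysis is exactly right: in a two-column shape a unit strip or a vertical domino in column two misses the first column, and the cells $(i,2)$ and $(i+1,1)$ are not edge-adjacent, so every second-column cell must be absorbed by the horizontal domino of its own row; this consumes $m_2(\mu)$ of the $m_2(\lambda)$ dominoes (all sign $+1$), and the residual vertical strip of $m_1(\mu)=m_1(\lambda)+2(m_2(\lambda)-m_2(\mu))$ first-column cells admits exactly $\binom{l(\lambda)-m_2(\mu)}{m_1(\lambda)}$ tilings by $m_2(\lambda)-m_2(\mu)$ vertical dominoes (each sign $-1$) and $m_1(\lambda)$ singletons, giving the uniform sign $(-1)^{m_2(\lambda)-m_2(\mu)}$ and the claimed formula; the paper's convention $\binom ab=0$ for $a<0$ or $b<0$ indeed covers the case $m_2(\mu)>m_2(\lambda)$, where no tiling exists (your parenthetical listing the degenerate cases repeats the same inequality twice --- presumably one instance was meant to be $l(\lambda)<m_2(\mu)$). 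What your route buys over the paper's citation is a self-contained check of the two-column formula, including why the count is the same one-dimensional domino--singleton enumeration that underlies Lemma~\ref{lem:binom_coeff}, which makes the later use of that lemma in Proposition~\ref{prop:inverse_Kostka} look less coincidental. Your fallback route via $\sum_\nu\widetilde K^{-1}_{\lambda\nu}K_{\nu\mu}=\delta_{\lambda\mu}$ is plausible but only sketched: the dominance-support claim for $\widetilde K^{-1}$ must be read off from the proposed formula itself (for two-column shapes of equal size, $\widetilde K^{-1}_{\lambda\nu}\ne0$ forces $m_2(\nu)\le m_2(\lambda)$, which is dominance $\nu\trianglelefteq\lambda$) rather than from unitriangularity of $K^{-1}$, and one would still need the explicit two-column Kostka numbers; since the tabloid argument is complete, none of this is needed.
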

For $\sigma_1,\sigma_2\in\mathfrak S_2$ and $u=(u_1,u_2), v=(v_1,v_2)\in(\mathbb Z_{\geq0})^2$, let $C(u,v;\sigma_1,\sigma_2)$ be the set of partitions $\lambda=(\lambda_1,\lambda_2)\in\mathscr P^{(2)}$ which satisfy the conditions $\lambda_1+\lambda_2=u_1+u_2+v_1+v_2-2$ and $\lambda_2\geq u_{\sigma_1(2)}+v_{\sigma_2(2)}$.
One obtains $C(\tau_1^{-1}u,\tau_2^{-1}v;\sigma_1,\sigma_2)=C(u,v;\tau_1\sigma_1,\tau_2\sigma_2)$ for $\tau_1,\tau_2\in\mathfrak S_2$ from the definition.
\begin{prop}\label{prop:inverse_Kostka}
For $\xi\in\widetilde{\mathscr P}^{(2)}$ and $u=(u_1,u_2),v=(v_1,v_2)\in(\mathbb Z_{\geq0})^2$ which satisfy $u_2<u_1$ and $v_2<v_1$, the following identity holds:
\[\sum_{\substack{\eta,\zeta\in\widetilde{\mathscr P}^{(2)}\\\eta\cup\zeta=\xi}}K^{-1}_{\eta(u_1-1,u_2)'}K^{-1}_{\zeta(v_1-1,v_2)'}=\sum_{\sigma_1,\sigma_2\in\mathfrak S_2}\mathrm{sgn}(\sigma_1)\mathrm{sgn}(\sigma_2)\sum_{\lambda\in C(u,v;\sigma_1,\sigma_2)}K^{-1}_{\xi\lambda'}.\]
\end{prop}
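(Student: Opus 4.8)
The plan is to pass to the basis $\{e_\xi\mid\xi\in\widetilde{\mathscr P}^{(2)}\}$ of $\Lambda_2$ and thereby turn the statement into an identity of products of Schur polynomials. Work in $\Lambda_2$ with fresh variables $z=(z_1,z_2)$. Applying Proposition~\ref{prop:M101} to the partition $(u_1-1,u_2)$ (which is genuinely a partition because $u_2<u_1$) identifies $K^{-1}_{\eta(u_1-1,u_2)'}$ with the coefficient of $e_\eta(z)$ in $s_{(u_1-1,u_2)}(z)$, and likewise $K^{-1}_{\zeta(v_1-1,v_2)'}=[e_\zeta]\,s_{(v_1-1,v_2)}(z)$ and $K^{-1}_{\xi\lambda'}=[e_\xi]\,s_\lambda(z)$. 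Since $e_\eta e_\zeta=e_{\eta\cup\zeta}$ directly from the definition of $e_\lambda$, multiplying the two expansions and collecting the coefficient of $e_\xi$ shows that the left-hand side of the proposition equals $[e_\xi]\bigl(s_{(u_1-1,u_2)}(z)\,s_{(v_1-1,v_2)}(z)\bigr)$, while the right-hand side equals $[e_\xi]$ of $\sum_{\sigma_1,\sigma_2}\mathrm{sgn}(\sigma_1)\mathrm{sgn}(\sigma_2)\sum_{\lambda\in C(u,v;\sigma_1,\sigma_2)}s_\lambda(z)$. As the $e_\xi$ form a basis, the proposition is equivalent to the single symmetric-function identity
\[s_{(u_1-1,u_2)}(z)\,s_{(v_1-1,v_2)}(z)=\sum_{\sigma_1,\sigma_2\in\mathfrak S_2}\mathrm{sgn}(\sigma_1)\mathrm{sgn}(\sigma_2)\sum_{\lambda\in C(u,v;\sigma_1,\sigma_2)}s_\lambda(z).\]
Note the weight constraint is automatic here: both sides are homogeneous of degree $N:=u_1+u_2+v_1+v_2-2$, so the coefficient of $e_\xi$ vanishes on both sides unless $|\xi|=N$.

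To evaluate the left-hand side I would use the two-variable facts $s_{(\alpha,\beta)}(z)=(z_1z_2)^\beta s_{(\alpha-\beta)}(z)$ and the Pieri product $s_{(p)}(z)s_{(q)}(z)=\sum_{k=0}^{\min(p,q)}s_{(p+q-k,k)}(z)$. Writing $a:=u_1-u_2\geq1$ and $b:=v_1-v_2\geq1$, this gives $s_{(u_1-1,u_2)}s_{(v_1-1,v_2)}=(z_1z_2)^{u_2+v_2}s_{(a-1)}s_{(b-1)}=\sum_{k=0}^{\min(a,b)-1}s_{(N-\lambda_2,\lambda_2)}$ with $\lambda_2=u_2+v_2+k$. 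Hence the left-hand side is the sum of $s_{(N-\lambda_2,\lambda_2)}$ over exactly the window $u_2+v_2\leq\lambda_2\leq u_2+v_2+\min(a,b)-1$, each with coefficient $1$.

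For the right-hand side I would read off the coefficient of $s_{(N-\lambda_2,\lambda_2)}$. Since $C(u,v;\sigma_1,\sigma_2)$ consists of those $(N-\lambda_2,\lambda_2)\in\mathscr P^{(2)}$ with $\lambda_2\geq u_{\sigma_1(2)}+v_{\sigma_2(2)}$, the four choices of $(\sigma_1,\sigma_2)$ contribute, with signs, the inclusion–exclusion expression $D(\lambda_2)=[\lambda_2\geq u_2{+}v_2]-[\lambda_2\geq u_2{+}v_1]-[\lambda_2\geq u_1{+}v_2]+[\lambda_2\geq u_1{+}v_1]$ (writing $[P]=1$ if $P$ holds and $0$ otherwise), subject to the admissibility constraint $0\leq\lambda_2\leq\lfloor N/2\rfloor$ forced by $(N-\lambda_2,\lambda_2)\in\mathscr P^{(2)}$. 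The main obstacle — the only delicate point — is to show that on the admissible range $D(\lambda_2)$ collapses to the indicator of the same window $[u_2+v_2,\,u_2+v_2+\min(a,b))$. One computes that $D$ takes the value $1$ exactly on $[u_2+v_2,\,u_2+v_2+\min(a,b))$ and the value $-1$ exactly on $[\max(u_1+v_2,u_2+v_1),\,u_1+v_1)$; using $N=2(u_2+v_2)+a+b-2$ one checks $\max(u_1+v_2,u_2+v_1)=u_2+v_2+\max(a,b)$ exceeds $N/2$ by $\tfrac{|a-b|}{2}+1\geq1$, so the whole $-1$ interval lies strictly above $\lfloor N/2\rfloor$ and is cut off by $\lambda_1\geq\lambda_2$. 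This is where the hypotheses $u_2<u_1$ and $v_2<v_1$ enter. With the $-1$ part removed the two coefficient functions agree term by term, proving the Schur identity and hence the proposition. (A more computational alternative would instead substitute the explicit formula of Proposition~\ref{prop:inv_Kostka_l=2} and recognize the left-hand side as the domino-tiling count controlled by Lemma~\ref{lem:binom_coeff}, but the reduction above avoids that bookkeeping.)
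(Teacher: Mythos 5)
Your proof is correct, and it takes a genuinely different route from the paper's. The paper proves Proposition \ref{prop:inverse_Kostka} at the level of matrix entries: after assuming $u_2+v_1\leq u_1+v_2$ without loss of generality and observing that $C(u,v;(1,2),\sigma_2)=\emptyset$, it substitutes the E\u{g}ecio\u{g}lu--Remmel formula of Proposition \ref{prop:inv_Kostka_l=2} into both sides, turning each into a signed sum of products of binomial coefficients, and then matches them by telescoping against the binomial identity of Lemma \ref{lem:binom_coeff}. You instead dualize: via Proposition \ref{prop:M101} and the multiplicativity $e_\eta(z)e_\zeta(z)=e_{\eta\cup\zeta}(z)$, the claimed identity for all $\xi$ simultaneously is equivalent to the single two-variable Schur identity $s_{(u_1-1,u_2)}(z)\,s_{(v_1-1,v_2)}(z)=\sum_{\sigma_1,\sigma_2}\mathrm{sgn}(\sigma_1)\mathrm{sgn}(\sigma_2)\sum_{\lambda\in C(u,v;\sigma_1,\sigma_2)}s_\lambda(z)$, which you verify by the $n=2$ Pieri rule on the left and inclusion--exclusion on the right. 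I checked the delicate step: with $a=u_1-u_2$, $b=v_1-v_2$, $N=u_1+u_2+v_1+v_2-2$, the signed indicator $D(\lambda_2)$ equals $1$ on $[u_2+v_2,\,u_2+v_2+\min(a,b))$ and $-1$ on $[\max(u_1+v_2,u_2+v_1),\,u_1+v_1)$, and since $\max(u_1+v_2,u_2+v_1)-N/2=|a-b|/2+1\geq1$, the $-1$ window indeed lies above $\lfloor N/2\rfloor$ and is removed by $\lambda_1\geq\lambda_2$, while the $+1$ window matches the Pieri range term by term. One minor imprecision: the hypotheses $u_2<u_1$, $v_2<v_1$ are really used earlier --- to make $(u_1-1,u_2)$ and $(v_1-1,v_2)$ partitions so that Proposition \ref{prop:M101} and the Pieri step apply --- since the cutoff inequality $|a-b|/2+1\geq1$ holds for all $a,b$; this does not affect correctness. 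What each approach buys: yours is more conceptual, explains the $C$-sets as an inclusion--exclusion encoding of a Pieri window whose negative part is invisible for partitions of length at most $2$, dispenses entirely with Lemma \ref{lem:binom_coeff} and the explicit inverse Kostka entries, and handles all $\xi$ at once; the paper's computation, in exchange, produces the explicit common binomial value (its formula (\ref{inv.kostka.formula1})) and stays inside the inverse-Kostka combinatorics in which the general-$n$ Conjecture \ref{conj:cSS=Q''} is phrased, which is the form one would need to attack $n>2$, where the analogous signed sum need not collapse so simply.
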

\begin{proof}
Since the statement is symmetric in $u$ and $v$, we assume $u_2+v_1\leq u_1+v_2$ without loss of generality.
We may assume $m_1(\xi)+2m_2(\xi)=u_1+u_2+v_1+v_2-2$ because the both sides equal $0$ otherwise.
In the left-hand side of the statement, non-zero summands are ones which correspond to $\eta$ and $\zeta$ such that $m_1(\eta)+2m_2(\eta)=u_1+u_2-1$ and $m_1(\zeta)+2m_2(\zeta)=v_1+v_2-1$. 
By the substitution $a=m_2(\zeta)$, one obtains $m_2(\eta)=m_2(\xi)-a$ and can rewrite the left-hand side of the statement as follows:
\[\sum_{\substack{\eta,\zeta\in\widetilde{\mathscr P}^{(2)}\\\eta\cup\zeta=\xi}}K^{-1}_{\eta(u_1-1,u_2)'}K^{-1}_{\zeta(v_1-1,v_2)'}=\sum_{a=0}^{m_2(\xi)}K^{-1}_{\eta_a(u_1-1,u_2)'}K^{-1}_{\zeta_a(v_1-1,v_2)'},\]
where $\eta_a=(1^{u_1+u_2-1-2(m_2(\xi)-a)}2^{m_2(\xi)-a})$ and $\zeta_a=(1^{v_1+v_2-1-2a}2^a)$ for $0\leq a\leq m_2(\xi)$.
It follows from Proposition \ref{prop:inv_Kostka_l=2} that
\[\sum_{\substack{\eta,\zeta\in\widetilde{\mathscr P}^{(2)}\\\eta\cup\zeta=\xi}}K^{-1}_{\eta(u_1-1,u_2)'}K^{-1}_{\zeta(v_1-1,v_2)'}=(-1)^{m_2(\xi)-u_2-v_2}\sum_{a=0}^{m_2(\xi)}\binom{u_1-1-m_2(\xi)+a}{m_2(\xi)-a-u_2}\binom{v_1-1-a}{a-v_2}.\]
The non-zero summands are ones which correspond to $v_2\leq a\leq m_2(\xi)-u_2$.
Therefore, by the substitution $i=a-v_2$, one obtains
\begin{align}\label{inv.kostka.formula1}
&\sum_{\substack{\eta,\zeta\in\widetilde{\mathscr P}^{(2)}\\\eta\cup\zeta=\xi}}K^{-1}_{\eta(u_1-1,u_2)'}K^{-1}_{\zeta(v_1-1,v_2)'}\\
&=(-1)^{m_2(\xi)-u_2-v_2}\sum_{i=0}^{m_2(\xi)-u_2-v_2}\binom{u_1+v_2-1-m_2(\xi)+i}{m_2(\xi)-u_2-v_2-i}\binom{v_1-v_2-1-i}{i}\notag\\
&=(-1)^{m_2(\xi)-u_2-v_2}\sum_{i=0}^{m_2(\xi)-u_2-v_2}\binom{l(\xi)-u_2-v_1+1+i}{m_2(\xi)-u_2-v_2-i}\binom{v_1-v_2-1-i}{i}.\notag
\end{align}
\par The right-hand side of the statement is rewritten with binomial coefficient using Proposition \ref{prop:inv_Kostka_l=2} similarly:
\begin{align*}
&\sum_{\sigma_1,\sigma_2\in\mathfrak S_2}\mathrm{sgn}(\sigma_1)\mathrm{sgn}(\sigma_2)\sum_{\lambda\in C(u,v;\sigma_1,\sigma_2)}K^{-1}_{\xi\lambda'}\\
&=\sum_{\sigma_1,\sigma_2\in\mathfrak S_2}\mathrm{sgn}(\sigma_1)\mathrm{sgn}(\sigma_2)\sum_{\lambda\in C(u,v;\sigma_1,\sigma_2)}(-1)^{m_2(\xi)-\lambda_2}\binom{l(\xi)-\lambda_2}{m_1(\xi)}\\
&=\sum_{\lambda\in C(u,v;\mathrm{id},\mathrm{id})}(-1)^{m_2(\xi)-\lambda_2}\binom{l(\xi)-\lambda_2}{m_1(\xi)}
-\sum_{\lambda\in C(u,v;\mathrm{id},(1,2))}(-1)^{m_2(\xi)-\lambda_2}\binom{l(\xi)-\lambda_2}{m_1(\xi)}\\
&\qquad-\sum_{\lambda\in C(u,v;(1,2),\mathrm{id})}(-1)^{m_2(\xi)-\lambda_2}\binom{l(\xi)-\lambda_2}{m_1(\xi)}
+\sum_{\lambda\in C(u,v;(1,2),(1,2))}(-1)^{m_2(\xi)-\lambda_2}\binom{l(\xi)-\lambda_2}{m_1(\xi)}.
\end{align*}
For $\sigma_2\in\mathfrak S_2$, one obtains that $C(u,v;(1,2),\sigma_2)$ is empty.
It is because $\lambda\in C(u,v;(1,2),\sigma_2)$ satisfies
\[2\lambda_2\geq2u_1+2v_{\sigma_2(2)}\geq u_1+u_2+v_1+v_2>\lambda_1+\lambda_2\]
and there is no such $\lambda$.
Hence the third and the fourth summation is $0$ and
\begin{align*}
&\sum_{\sigma_1,\sigma_2\in\mathfrak S_2}\mathrm{sgn}(\sigma_1)\mathrm{sgn}(\sigma_2)\sum_{\lambda\in C(u,v;\sigma_1,\sigma_2)}K^{-1}_{\xi\lambda'}\\
&=\sum_{\lambda\in C(u,v;\mathrm{id},\mathrm{id})}(-1)^{m_2(\xi)-\lambda_2}\binom{l(\xi)-\lambda_2}{m_1(\xi)}
-\sum_{\lambda\in C(u,v;\mathrm{id},(1,2))}(-1)^{m_2(\xi)-\lambda_2}\binom{l(\xi)-\lambda_2}{m_1(\xi)}\\
&=\sum_{\lambda_2=u_2+v_2}^{u_2+v_1-1}(-1)^{m_2(\xi)-\lambda_2}\binom{l(\xi)-\lambda_2}{m_1(\xi)}\\
&=\sum_{\lambda_2=u_2+v_2}^{u_2+v_1-1}(-1)^{m_2(\xi)-\lambda_2}\binom{l(\xi)-\lambda_2}{m_2(\xi)-\lambda_2}.
\end{align*}
Using Lemma \ref{lem:binom_coeff} with $k=u_2+v_1-\lambda_2-1$ we obtain
\begin{align}\label{inv.kostka.formula2}
&\sum_{\sigma_1,\sigma_2\in\mathfrak S_2}\mathrm{sgn}(\sigma_1)\mathrm{sgn}(\sigma_2)\sum_{\lambda\in C(u,v;\sigma_1,\sigma_2)}K^{-1}_{\xi\lambda'}\\
&=\sum_{\lambda_2=u_2+v_2}^{u_2+v_1-1}(-1)^{m_2(\xi)-\lambda_2}\left(\sum_{i=0}^{m_2(\xi)-\lambda_2}\binom{l(\xi)-u_2-v_1+1+i}{m_2(\xi)-\lambda_2-i}\binom{u_2+v_1-\lambda_2-1-i}{i}\right.\notag\\
&\qquad\left.+\sum_{i=0}^{m_2(\xi)-\lambda_2-1}\binom{l(\xi)-u_2-v_1+1+i}{m_2(\xi)-\lambda_2-i-1}\binom{u_2+v_1-\lambda_2-2-i}{i}\right)\notag\\
&=\sum_{\lambda_2=u_2+v_2}^{u_2+v_1-1}(-1)^{m_2(\xi)-\lambda_2}\sum_{i=0}^{m_2(\xi)-\lambda_2}\binom{l(\xi)-u_2-v_1+1+i}{m_2(\xi)-\lambda_2-i}\binom{u_2+v_1-\lambda_2-1-i}{i}\notag\\
&\qquad+\sum_{\lambda_2=u_2+v_2+1}^{u_2+v_1}(-1)^{m_2(\xi)-\lambda_2+1}\sum_{i=0}^{m_2(\xi)-\lambda_2}\binom{l(\xi)-u_2-v_1+1+i}{m_2(\xi)-\lambda_2-i}\binom{u_2+v_1-\lambda_2-1-i}{i}\notag\\
&=(-1)^{m_2(\xi)-(u_2+v_2)}\sum_{i=0}^{m_2(\xi)-(u_2+v_2)}\binom{l(\xi)-u_2-v_1+1+i}{m_2(\xi)-(u_2+v_2)-i}\binom{u_2+v_1-(u_2+v_2)-1-i}{i}\notag\\
&=(-1)^{m_2(\xi)-u_2-v_2}\sum_{i=0}^{m_2(\xi)-u_2-v_2}\binom{l(\xi)-u_2-v_1+1+i}{m_2(\xi)-u_2-v_2-i}\binom{v_1-v_2-1-i}{i}.\notag
\end{align}
From (\ref{inv.kostka.formula1}) and (\ref{inv.kostka.formula2}) the proposition is proved.
\end{proof}
Now we can prove Theorem \ref{thm:cSS=Q'',n=2}.
\begin{proof}[Proof of Theorem \ref{thm:cSS=Q'',n=2}]
Here we prove the former identity of the theorem.
For $u,v\in A_2$,
\begin{align*}
&\mathrm{sgn}(\tau_u)\mathrm{sgn}(\tau_v)\sum_{\substack{\eta,\zeta\in\widetilde{\mathscr P}^{(2)}\\\eta\cup\zeta=\xi}}K^{-1}_{\eta(\tau_u^{-1}u-\delta)'}K^{-1}_{\zeta(\tau_v^{-1}v-\delta)'}\\
&=\mathrm{sgn}(\tau_u)\mathrm{sgn}(\tau_v)\sum_{\rho_1,\rho_2\in\mathfrak S_2}\mathrm{sgn}(\rho_1)\mathrm{sgn}(\rho_2)\sum_{\lambda\in C(\tau_u^{-1}u,\tau_v^{-1}v;\rho_1,\rho_2)}K^{-1}_{\xi\lambda'}\\
&=\mathrm{sgn}(\tau_u)\mathrm{sgn}(\tau_v)\sum_{\rho_1,\rho_2\in\mathfrak S_2}\mathrm{sgn}(\rho_1)\mathrm{sgn}(\rho_2)\sum_{\lambda\in C(u,v;\tau_u\rho_1,\tau_v\rho_2)}K^{-1}_{\xi\lambda'}\\
&=\sum_{\rho_1,\rho_2\in\mathfrak S_2}\mathrm{sgn}(\tau_u\rho_1)\mathrm{sgn}(\tau_v\rho_2)\sum_{\lambda\in C(u,v;\tau_u\rho_1,\tau_v\rho_2)}K^{-1}_{\xi\lambda'}\\
&=\sum_{\sigma_1,\sigma_2\in\mathfrak S_2}\mathrm{sgn}(\sigma_1)\mathrm{sgn}(\sigma_2)\sum_{\lambda\in C(u,v;\sigma_1,\sigma_2)}K^{-1}_{\xi\lambda'}
\end{align*}
from Proposition \ref{prop:inverse_Kostka}.
Therefore, the left-hand side of the identity are calculated as follows:
\begin{align*}
&\sum_{u,v\in A_2}\mathrm{sgn}(\tau_u)\mathrm{sgn}(\tau_v)q_{2u_1}(x)q_{2u_2+1}(x)q_{2v_1}(x)q_{2v_2+1}(x)\sum_{\substack{\eta,\zeta\in\widetilde{\mathscr P}^{(2)}\\\eta\cup\zeta=\xi}}K^{-1}_{\eta(\tau_u^{-1}u-\delta)'}K^{-1}_{\zeta(\tau_v^{-1}v-\delta)'}\\
&=\sum_{u,v\in A_2}q_{2u_1}(x)q_{2u_2+1}(x)q_{2v_1}(x)q_{2v_2+1}(x)\sum_{\sigma_1,\sigma_2\in\mathfrak S_2}\mathrm{sgn}(\sigma_1)\mathrm{sgn}(\sigma_2)\sum_{\lambda\in C(u,v;\sigma_1,\sigma_2)}K^{-1}_{\xi\lambda'}.
\end{align*}
For $u\in(\mathbb Z_{\geq0})^2\setminus A_2$ and $v\in(\mathbb Z_{\geq0})^2$, $u_1=u_2$ and
\[\sum_{\sigma_1,\sigma_2\in\mathfrak S_2}\mathrm{sgn}(\sigma_1)\mathrm{sgn}(\sigma_2)\sum_{\lambda\in C(u,v;\sigma_1,\sigma_2)}K^{-1}_{\xi\lambda'}=0\]
hold because $C(u,v;\mathrm{id},\sigma_2)=C(u,v;(1,2),\sigma_2)$ for $\sigma_2\in\mathfrak S_2$.
This formula holds for $u\in(\mathbb Z_{\geq0})^2$ and $v\in(\mathbb Z_{\geq0})^2\setminus A_2$ similarly.
Hence
\begin{align*}
&\sum_{u,v\in A_2}\mathrm{sgn}(\tau_u)\mathrm{sgn}(\tau_v)q_{2u_1}(x)q_{2u_2+1}(x)q_{2v_1}(x)q_{2v_2+1}(x)\sum_{\substack{\eta,\zeta\in\widetilde{\mathscr P}^{(2)}\\\eta\cup\zeta=\xi}}K^{-1}_{\eta(\tau_u^{-1}u-\delta)'}K^{-1}_{\zeta(\tau_v^{-1}v-\delta)'}\\
&=\sum_{u,v\in(\mathbb Z_{\geq0})^2}q_{2u_1}(x)q_{2u_2+1}(x)q_{2v_1}(x)q_{2v_2+1}(x)\sum_{\sigma_1,\sigma_2\in\mathfrak S_2}\mathrm{sgn}(\sigma_1)\mathrm{sgn}(\sigma_2)\sum_{\lambda\in C(u,v;\sigma_1,\sigma_2)}K^{-1}_{\xi\lambda'}\\
&=\sum_{u,v\in(\mathbb Z_{\geq0})^2}\sum_{\sigma_1,\sigma_2\in\mathfrak S_2}\sum_{\lambda\in C(u,v;\sigma_1,\sigma_2)}\mathrm{sgn}(\sigma_1)\mathrm{sgn}(\sigma_2)K^{-1}_{\xi\lambda'}q_{2u_1}(x)q_{2u_2+1}(x)q_{2v_1}(x)q_{2v_2+1}(x)\\
&=\sum_{\lambda\in\mathscr P^{(2)}}\sum_{\sigma_1,\sigma_2\in\mathfrak S_2}\sum_{(u,v)\in B(\lambda;\sigma_1,\sigma_2)}\mathrm{sgn}(\sigma_1)\mathrm{sgn}(\sigma_2)K^{-1}_{\xi\lambda'}q_{2u_1}(x)q_{2u_2+1}(x)q_{2v_1}(x)q_{2v_2+1}(x)\\
&=\sum_{\lambda\in\mathscr P^{(2)}}K^{-1}_{\xi\lambda'}\sum_{\sigma_1,\sigma_2\in\mathfrak S_2}\mathrm{sgn}(\sigma_1)\mathrm{sgn}(\sigma_2)\sum_{(u,v)\in B(\lambda;\sigma_1,\sigma_2)}q_{2u_1}(x)q_{2u_2+1}(x)q_{2v_1}(x)q_{2v_2+1}(x)
\end{align*}
and this is equal to
\[\sum_{\lambda\in\mathscr P^{(2)}}2^{-2}K^{-1}_{\xi\lambda'}Q_{2\lambda+2\Delta}(x,x)\]
from Proposition \ref{prop:expression_of_Q}.
It completes the proof of the former identity, and the latter one can be proved in a similar way.
\end{proof}
\section*{Acknowledgments}
The author is grateful to Hiro-Fumi~Yamada for suggesting the topic treated in this paper.
\bibliography{reference}
\bibliographystyle{amsplain}
\end{document}